\newlength{\originalbase}
\newcommand{\spacing}[1]{\setlength{\baselineskip}{#1\originalbase}}
\begin{document}
\spacing{1.2}
\parskip=+1pt

\renewcommand{\thefigure}{\thesection.\arabic{figure}}
\renewcommand{\thetable}{\thesection.\arabic{table}}

  \newenvironment{proof}{\vspace{1ex}\noindent{\bf Proof:}}{\hspace*{\fill}
  $\blacksquare$\vspace{1ex}}
  \newenvironment{proofof}[1]{\vspace{1ex}\noindent{\bf Proof of #1:}}{\hspace*{\fill}
  $\blacksquare$\vspace{1ex}}
  \def\noproof{\hspace*{\fill}$\blacksquare$}

  \newtheorem{theorem}{Theorem}[section]
  \newtheorem{lemma} [theorem] {Lemma}
  \newtheorem{corollary} [theorem] {Corollary}
  \newtheorem{claim} [theorem] {Claim}
  \newtheorem{proposition} [theorem] {Proposition}
  \newtheorem{definition} [theorem] {Definition}
  \newtheorem{remark} [theorem] {Remark}
  \newtheorem{conjecture} [theorem] {Conjecture}
\newtheorem{question} [theorem] {Question}

\newtheorem{example} [theorem] {Example}

\def\Pr{\mbox{Pr}}

\def\Tsi{T_{\rm si}}
\def\Tfor{T_{\rm forget}}
\def\Tmix{T_{\rm mix}}
\def\Tres{T_{\rm reset}}
\def\Tbest{T_{\rm bestmix}}

\def\Ret{\mbox{Ret}}

\def\V{V}
\def\Vc{\overline{V}}
\def\W{W}
\def\Wc{\overline{W}}
\def\Sc{\overline{S}}
\def\Vu{V_{u:v}}
\def\Vv{V_{v:u}}
\def\Vi{V_i}
\def\Va{V_{a:b}}
\def\Vb{V_{b:a}}
\def\Vuc{\overline{V_u}}
\def\Vvc{\overline{V_v}}
\def\Vic{\overline{V_i}}


\def\tbm{T_{\rm{bestmix}}}
\def\tm{T_{\rm{mix}}}
\def\tf{T_{\rm{forget}}}
\def\tres{T_{\rm{reset}}}

\def\wb{Y}

\def\T{\mathcal{T}}
\def\G{\mathcal{G}}
\def\nG{\widetilde{G}}
\def\nE{\widetilde{E}}

\def\D{\mathcal{D}}
\def\S{\mathcal{S}}

\def\npi{\widetilde{\pi}}
\def\pdelt{\Delta\mathrm{Pess}}
\def\hp{\mathrm{Pess}}
\newcommand{\p}[1]{#1'}
\newcommand{\psq}[1]{#1''}

\newcommand{\tr}[2]{\tau(#1,#2)}
\newcommand{\str}[2]{\sigma(#1,#2)}

\def\R{R}
\def\L{L}

\def\lpess{\mathrm{left}(G)}
\def\nlpess{\mathrm{left}(\nG)}
\def\rpess{\mathrm{right}(G)}
\def\nrpess{\mathrm{right}(\nG)}

\def\qed{\hfill$\Box$}

\newcommand{\case}[1]{{\bf Case #1:}}

\def\blue{\color{blue}}
\def\grn{\color{green}}
\def\cyan{\color{cyan}}
\def\magenta{\color{magenta}}
\def\brown{\color{brown}}
\def\black{\color{black}}
\def\yellow{\color{yellow}}
\def\grey{\color{grey}}

\newcommand{\bnote}[1]{\marginpar{{\blue \footnotesize\raggedright#1}}}
\newcommand{\gnote}[1]{\marginpar{{\grn \footnotesize\raggedright#1}}}
\newcommand{\rnote}[1]{\marginpar{{\red \footnotesize\raggedright#1}}}

\newif\ifnotesw\noteswtrue
\newcommand{\mycomment}[1]{\ifnotesw $\blacktriangleright$\ {\sf #1}\ 
  $\blacktriangleleft$ \fi}

\newcommand{\bcomment}[1]{{\blue \mycomment{#1}}}


\title{The best mixing time for random walks on trees}
\author{Andrew Beveridge\footnote{
Department of Mathematics, Statistics and Computer Science, Macalester College, Saint Paul, MN 55105}
~and Jeanmarie Youngblood\footnote{Department of Mathematics, University of Minnesota, Minneapolis, MN 55455}}


\maketitle

\begin{abstract}
We characterize the extremal structures for mixing walks on trees that start from the most advantageous vertex. 
Let $G=(V,E)$ be a tree with stationary distribution $\pi$. For a vertex $v \in V$, let $H(v,\pi)$ denote the expected length of an optimal stopping rule from $v$ to $\pi$. The \emph{best mixing time} for $G$ is $\min_{v \in V} H(v,\pi)$. We show that among all trees with $|V|=n$, the best mixing time is minimized uniquely by the star. For even $n$, the best mixing time is maximized by the uniquely path. Surprising, for odd $n$, the best mixing time is maximized uniquely by a path of length $n-1$ with a single leaf adjacent to one central vertex.
\end{abstract}

\section{Introduction}

We resolve the following extremal question for random walks on trees: what tree structure minimizes/maximizes the expected length of an optimal  stopping rule to the stationary distribution $\pi$, given that we start at the most advantageous vertex? Naturally, the star $S_n$ is the minimizing tree structure, but the maximization problem has an unexpected twist. The path $P_{n}$ is the maximizing structure when $n$ is even, but for odd $n$ the maximizing structure is the near-path  $Y_n$ consisting of a path on $n-1$ vertices with a single leaf adjacent to one of the two central vertices. We refer to this graph $Y_n$ as the \emph{wishbone}. This choice of name is suggested by the layout of $Y_n$ in Figure \ref{fig:menagerie}(c) below. 

A \emph{random walk} on an undirected graph $G=(V,E)$ consists of a sequence of vertices $(w_0, w_1, \ldots , w_n, \ldots)$ such that 
$\Pr[w_{t+1} = w \mid w_t = v]$ is $1/ \deg(v)$ if $(v,w) \in E$ and $0$ otherwise. The \emph{hitting time} $H(v,w)$ from vertex $v$ to vertex $w$  is the expected number of steps before a random walk started at $v$ visits $w$ for the first time. We also define $H(v,v)=0$, while $\Ret(v)$ denotes the expected number of steps before a random walk started at $v$ first returns to $v$.

When $G$ is not bipartite, the distribution of $w_t$ converges to the \emph{stationary distribution} $\pi$, where 
$\pi_v = \deg(v) /2|E|.$ 
Inconveniently, we do not have convergence for bipartite graphs (including trees), but we can rectify this at the cost of doubling the expected length of any walk.
Indeed the distribution $w_t$ converges to $\pi$ when we follow  a \emph{lazy random walk} in which we remain at the current state with probability $1/2$ at each time step. 
A \emph{mixing measure} of a graph $G$ captures the rate of convergence to the stationary distribution $\pi$. Let $\sigma=\sigma_0$ denote our initial distribution and $\sigma_t$ denote the distribution for the $t$-th step of a random walk. For a fixed constant $0 < \epsilon < 1$, the (approximate) mixing time of  $G$ is given by $\Tmix(\epsilon) = \max_{\sigma} \min \{T \geq 0   : \, \| \sigma_t - \pi \| < \epsilon \mbox{ for all } t \geq T \},$ where  the maximum is taken over all possible starting distributions and $\| \cdot \|$ is the given metric. This mixing time depends upon the choice of the parameter $\epsilon$. 

Lov\'asz and Winkler \cite{lovasz+winkler} studied a class of parameterless mixing measures by using more sophisticated \emph{stopping rules} to drive the random walk to a desired distribution. Suppose that we are given a \emph{starting distribution} $\sigma$ and a \emph{target distribution} $\tau$. A \emph{$(\sigma,\tau)$ stopping rule} halts a random walk whose initial state is drawn from $\sigma$ so that the final state is governed by the distribution $\tau$. The \emph{access time} $H(\sigma, \tau)$ denotes the minimum expected length for a such a $(\sigma,\tau)$ stopping rule to halt. We say that a stopping rule is \emph{optimal} if it achieves this minimum expected length. Using access times, we have three natural mixing measures, the \emph{mixing time}, the \emph{reset time}, and the \emph{best mixing time} given respectively by
\[
\Tmix = \max_{v \in V} H(v, \pi), \quad \mbox{and} \quad \Tres = \sum_{v \in V} \pi_v H(v, \pi),
 \quad \mbox{and} \quad \Tbest = \min_{v \in V} H(v, \pi).
\]
These are the  worst-case, average-case and best-case mixing measures. These  quantities are called \emph{exact mixing measures}, as opposed to approximate mixing measures like $\Tmix(\epsilon)$ above. See \cite{ALW} for a taxonomy that compares exact and approximate mixing measures for Markov chains. We also note that exact stopping rules converge to $\pi$ on a bipartite graph, even for non-lazy walks. Indeed, stopping rules can employ  randomness in deciding when to stop, which has the same periodicity-breaking effect as using a lazy walk.

It is natural to wonder which graph structures lead to slow or rapid mixing, and the study of extremal graphs for random walks is well-established, cf. \cite{aldous+fill,bright, feige}. A natural line of inquiry is to restrict our attention to trees. The extremal tree structures on $n$ vertices for $\Tmix$ and $\Tfor$ were characterized in \cite{beveridge+wang}. Not surprisingly, the unique minimal structure is the star $S_n=K_{1,n-1}$ and the unique maximal structure is the path.

\begin{theorem}[\cite{beveridge+wang}]
\label{thm:mix-reset}
If $G$ is a tree on $n \geq 3$ vertices then
\[
\frac{3}{2} \leq \Tmix \leq \frac{2n^2 - 4n + 3}{6}
\]
and
\[
1 \leq \Tres \leq \left\{
\begin{array}{cc}
\frac{1}{4}(n^2 - 2n +2) & \mbox{if $n$ is even}, \\
\frac{1}{4}(n-1)^2 & \mbox{if $n$ is odd}. \\
\end{array}
\right.
\]
In each instance, the lower bound is achieved uniquely by the star $S_n$ and the upper bound is achieved uniquely by the path $P_n$.
\end{theorem}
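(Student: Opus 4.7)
My plan is to combine the well-known edge-sum formula for hitting times in trees with the Lov\'asz--Winkler characterization of access times to $\pi$. For a tree $G$, removing an edge $xy$ splits $G$ into components $T_x \ni x$ and $T_y \ni y$, and
\[
H(x,y) = 2|T_y|-1,
\]
so $H(v,u)$ is simply the sum of such quantities along the unique $v$--$u$ path in $G$. The Lov\'asz--Winkler identity
\[
H(v,\pi) \;=\; \max_{u \in V}\bigl(H(v,u) - \hp(u)\bigr), \qquad \hp(u) := \sum_{w \in V} \pi_w H(w,u),
\]
then expresses $\Tmix$, $\Tres$ and $\Tbest$ entirely in terms of the subtree sizes produced by deleting edges of $G$.

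First I would evaluate the four numerical quantities on the extremal graphs. For $S_n$ the calculation is elementary: the pessimal start is any leaf $\ell$, its pessimal target another leaf $\ell'$, and one checks $H(\ell,\pi)=3/2$ (independent of $n$) and $\Tres(S_n)=1$. For $P_n$ with vertices $v_1,\ldots,v_n$, symmetry places the pessimal start at $v_1$ with pessimal target $v_n$; summing the edge contributions $1,3,\ldots,2(n-1)-1$ and subtracting $\hp(v_n)$ yields the stated quadratic upper bound on $\Tmix(P_n)$. For $\Tres(P_n)$ one averages position by position; the parity split arises because the optimal target $u^{*}(v_i)$ for an interior vertex $v_i$ depends on which side of the midpoint $v_i$ lies on, and the midpoint behaves differently in the odd and even cases.

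Next I would establish uniqueness by a local exchange argument. Suppose $G$ is neither a star nor a path; then $G$ has a vertex $b$ of degree at least $3$ with two pendant subtrees $A$ and $B$ hanging from $b$. Define a \emph{straightening} move that detaches $A$ from $b$ and reattaches it at the far end of the longest root-to-leaf branch of $B$, producing $\nG$. Using the edge-sum representation, one shows $\Tmix(\nG) > \Tmix(G)$ and $\Tres(\nG) > \Tres(G)$, so iterating identifies $P_n$ as the unique maximizer. Symmetrically, a \emph{contraction} move that pulls a subtree one step closer to a maximum-degree vertex strictly decreases both measures, identifying $S_n$ as the unique minimizer.

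The main obstacle is the monotonicity step. The surgery simultaneously alters the sizes of many subtrees, the stationary distribution through degree changes, and, for $\Tmix$, the identity of the pessimal pair $(v^{*},u^{*})$. A natural approach is to recast each measure as a weighted edge sum $\sum_{e} w_e(2|T_e|-1)$ and to track the increment of this sum under a single move. The $\Tres$ case is cleaner because it averages over all starts and one can hope to exhibit a single monotone quantity that tracks the move. The $\Tmix$ case is more delicate: the argmax could shift between $G$ and $\nG$, so one must either bound $\Tmix(\nG)$ from below using the image of the old pessimal start together with a judicious choice of target, or else handle the shift by a case analysis depending on whether the new extremal vertex lies inside $A$, inside $B$, or at $b$ itself.
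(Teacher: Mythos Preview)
This theorem is not proved in the present paper: it is quoted from \cite{beveridge+wang} as background, and the paper gives no argument for it. So there is no ``paper's own proof'' to compare your proposal against. What the present paper \emph{does} prove is the analogous statement for $\Tbest$ (Theorem~\ref{thm:bestmix}), and it is worth noting that the method used there---reduce to caterpillars, then perform a sequence of local leaf-transplant surgeries that monotonically increase the mixing measure until one reaches the extremal graph---is exactly the strategy you are proposing for $\Tmix$ and $\Tres$. In that sense your plan is the standard and correct one, and indeed it is the approach of \cite{beveridge+wang}.

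Two remarks on the details. First, your adjacent-edge hitting time formula is stated with the wrong component: with your notation $T_x\ni x$, $T_y\ni y$, one has $H(x,y)=2|T_x|-1$, not $2|T_y|-1$ (compare equation~\eqref{eq:adjhtime}). Second, the version of the Lov\'asz--Winkler identity you wrote, $H(v,\pi)=\max_u\bigl(H(v,u)-H(\pi,u)\bigr)$, is the ``halting-state'' form; the paper instead uses the equivalent ``pessimal-vertex'' form $H(v,\pi)=\hp(v)-H(\pi,v)$ of Theorem~\ref{thm:mix}. Both are correct, but the pessimal-vertex form is more convenient for tracking changes under surgery because the target $v$ stays fixed, whereas in your form the maximizing $u$ can jump. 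You correctly flag this argmax-shift issue as the main obstacle for $\Tmix$; in practice (both here and in \cite{beveridge+wang}) it is handled not by a single blanket inequality but by restricting the surgeries to a small catalogue of elementary moves for which the shift can be controlled case by case, as in Lemma~\ref{lem:criteria} and Corollary~\ref{cor:criteria} of the present paper.
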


We add to these extremal tree characterizations by studying the best mixing time $\Tbest$.

\begin{theorem}
\label{thm:bestmix}
Let $G$ be a tree on $n \geq 3$ vertices. Then
\[
1/2 \leq \Tbest(G) \leq 
\left\{
\begin{array}{rl}
\frac{1}{12}(n^2 +4n -6) & \mbox{if $n$ is even}, \\
\frac{1}{12}(n^2+4n-15) & \mbox{if $n$ is odd}. \\
\end{array}
\right.
\]
The star $S_n$ is the unique minimizing tree.
For $n$ even, the path $P_n$ is the unique maximizing tree. For $n$ odd, the wishbone $Y_n$ is the unique
maximizing tree.
\end{theorem}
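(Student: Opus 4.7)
The argument splits cleanly into the lower bound and the upper bound, with the upper bound carrying all the work.

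For the \emph{lower bound}, my plan is to use the elementary fact that a $(v,\pi)$-stopping rule halts at time $0$ with probability at most $\pi_v$: the mass halted at time $0$ is concentrated at $v$, and the total halting mass at $v$ must equal $\pi_v$. This gives $H(v,\pi) \geq 1-\pi_v$ for every vertex. On any tree, $\pi_v = \deg(v)/(2(n-1)) \leq 1/2$, with equality only when $\deg(v) = n-1$, which forces $G = S_n$ and $v$ to be its center. So $\Tbest(G) \geq 1/2$, with equality only possible for $G = S_n$. The center of $S_n$ attains equality via the rule ``halt immediately with probability $1/2$, otherwise take one step''; the one-step walk from the center lands uniformly on a leaf, whose mass is $1/(2(n-1))$, so the halting distribution equals $\pi$ and the expected length is exactly $1/2$.

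For the \emph{upper bound}, my plan has three stages. First, I would derive an edge-by-edge closed form for $H(v,\pi)$ on a tree from the Lov\'asz--Winkler theory of optimal stopping rules. The optimal $(v,\pi)$-rule is the filling rule, and on a tree I expect the expected filling time to decompose as $H(v,\pi) = \sum_{e \in E} \phi_e(v)$, where $\phi_e(v)$ depends only on the $\pi$-masses of the two components of $G-e$ and on which component contains $v$. Second, I would minimize this sum over $v$ to locate the best starting vertex $v^*$; $v^*$ is characterized as a weighted barycenter balancing $\pi$-masses across incident subtrees, which for paths and wishbones is the (near-)center vertex. Third, I would prove the extremal inequality by a leaf-shifting induction: if $G$ differs from the conjectured extremum, exhibit a leaf whose relocation strictly increases $\Tbest$. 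Iterating reduces $G$ to a caterpillar and then to a path (for even $n$) or to a path with one stray leaf near the center (for odd $n$). Evaluating the closed form on $P_n$ and $Y_n$ should then give the claimed values $(n^2+4n-6)/12$ and $(n^2+4n-15)/12$.

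The principal obstacle is the odd-$n$ case, where the natural guess $P_n$ is \emph{not} the extremizer. Intuitively, for odd $n$ the path has a unique degree-$2$ center whose perfect symmetry makes mixing from the center cheap; attaching one extra leaf at a near-central vertex, as in $Y_n$, breaks this symmetry and forces the best starting vertex off-center, which strictly increases $H(v^*,\pi)$. Proving rigorously that $Y_n$ strictly beats $P_n$ and that no other tree beats $Y_n$ will require a parity-sensitive case analysis at the final stage of the leaf-shifting induction: once the tree has been reduced to ``path, possibly plus one stray leaf,'' the existence and location of that stray leaf behave differently for the two parities of $n$, and this is what drives the two different upper-bound formulas.
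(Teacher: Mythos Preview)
Your lower-bound argument is essentially the paper's. The upper-bound outline, however, has two genuine gaps.

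First, the proposed edge decomposition $H(v,\pi)=\sum_e \phi_e(v)$ with $\phi_e(v)$ depending only on the $\pi$-masses of the two sides of $e$ and on which side contains $v$ does not hold. The Lov\'asz--Winkler formula on a tree is
\[
H(v,\pi)=\hp(v)-\sum_{u}\pi_u H(u,v),
\]
and while the second term decomposes edge-by-edge, the first term $\hp(v)=\max_w H(w,v)$ is a path sum from the \emph{$v$-pessimal vertex} $v'$ to $v$. Which side of a given edge $e$ contains $v'$ is a global property of the tree, not a function of the two component masses alone. Relatedly, the minimizer $v^*$ is \emph{not} a weighted barycenter: the barycenter minimizes $\sum_u \pi_u H(u,v)$, whereas $v^*$ is a \emph{focus}, the vertex minimizing $\hp(v)$. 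These are distinct notions of centrality on trees, and the paper's entire machinery is built around the focus, not the barycenter.

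Second, the leaf-shifting induction is where the real difficulty lies, and your sketch underestimates it. Because $\Tbest$ is governed by $\hp(v^*)$, a single leaf move can abruptly relocate the best-mix focus (the pessimal vertex jumps, the inequality in Lemma~\ref{lemma:focuscat}(c) flips), and then the ``obvious'' monotonicity $\Delta\Tbest\ge 0$ can fail. The paper handles this with a three-phase algorithm using eleven carefully chosen surgeries (Tables~\ref{table:phase1}--\ref{table:phase3}), each accompanied by a verification that the foci either stay put or move in a controlled way (Lemma~\ref{lem:criteria}, Corollary~\ref{cor:criteria}) and that a slack inequality (Lemma~\ref{lem:slack}) forces $\Delta\Tbest\ge 0$. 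Your proposal would need an analogue of this focus-tracking apparatus; without it, the inductive step ``exhibit a leaf whose relocation strictly increases $\Tbest$'' is not justified, and indeed naive single-leaf moves do not always work---several of the paper's surgeries move two leaves simultaneously precisely to keep the focus stable.
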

Below, we assume that  $n \geq 4$ since there is a unique tree for $n=3$ (and the formula is correct in that case). The appearance of the wishbone is quite unexpected. At the end of the next section, we discuss the characteristics of $Y_n$ that lead to its maximization of the best mixing time for odd $n$. 

The proof of Theorem \ref{thm:bestmix} is more involved than the proof of  Theorem \ref{thm:mix-reset} in \cite{beveridge+wang}. The reason is that a minor change to the tree can abruptly shift the location of the initial vertex achieving $\tbm$, whereas the behavior of $\Tmix$ and $\Tres$ are more stable under minor structural alterations. Here is an overview of our proof. First, we  replace the given  tree with a caterpillar having a larger best mixing time. We then follow a prescribed algorithm, carefully moving one or two leaves at a time via a \emph{tree surgery} $S_i$. These surgeries monotonically increase $\Tbest$, until we arrive at an even path or an odd wishbone. Figure \ref{fig:examples} shows two sequences of surgeries, the first resulting in $P_{10}$ and the second resulting in $Y_{11}$. The surgeries and the guiding algorithm are described in Section \ref{sec:proof}. 

\begin{figure}[t]
\begin{center}
\begin{tikzpicture}[scale=.5, every node/.style={font=\scriptsize},
    decoration={
      markings,
      mark=at position 1 with {\arrow[scale=1.25,black]{latex}};
    }
  ]

%
%
\begin{scope}

\draw (1,0) -- (7,0);

\draw (2,0) -- (2,-1);
\draw (5,0) -- (4.67,-1);
\draw (5,0) -- (5.33,-1);

\foreach \i in {1,2,3,5,6,7}
{
\draw[fill] (\i,0) circle (4pt);
}

\draw[fill=black!20] (2,-1) circle (4pt);
\draw[fill=black!40] (4.67,-1) circle (4pt);
\draw[fill=black!60] (5.33,-1) circle (4pt);

\draw[fill=white] (4,0) circle (6pt);
\draw[fill=white] (4,0) circle (4pt);

\draw[bend left=20, postaction=decorate] (8, -.5) to (8,-2.5);
\node at (8.75,-1.5) {$S_1$};

\node at (2,-1.6) {$a$};
\node at (4.67,-1.5) {$b$};
\node at (5.33,-1.6) {$c$};
\node at (7,-.5) {$d$};

\end{scope}

\begin{scope}[shift={(0,-3)}]

\draw (1,0) -- (6,0);

\draw (2,0) -- (2,-1);
\draw (4,0) -- (4,-1);
\draw (5,0) -- (4.67,-1);
\draw (5,0) -- (5.33,-1);

\foreach \i in {1,2,5,6}
{
\draw[fill] (\i,0) circle (4pt);
}

\draw[fill=black!20] (2,-1) circle (4pt);
\draw[fill=black] (4,-1) circle (4pt);
\draw[fill=black!40] (4.67,-1) circle (4pt);
\draw[fill=black!60] (5.33,-1) circle (4pt);

\draw[fill=white] (3,0) circle (4pt);

\draw[fill=white] (4,0) circle (6pt);
\draw[fill=white] (4,0) circle (4pt);

\draw[bend left=20, postaction=decorate] (8, -.5) to (8,-2.5);
\node at (8.75,-1.5) {$S_4$};

\node at (2,-1.6) {$a$};
\node at (4.67,-1.5) {$b$};
\node at (5.33,-1.6) {$c$};
\node at (4,-1.5) {$d$};

\end{scope}

\begin{scope}[shift={(0,-6)}]

\draw (1,0) -- (7,0);

\draw (2,0) -- (2,-1);
\draw (4,0) -- (3.67,-1);
\draw (4,0) -- (4.33,-1);

\foreach \i in {1,2,5,6}
{
\draw[fill] (\i,0) circle (4pt);
}

\draw[fill=black!20] (2,-1) circle (4pt);
\draw[fill=black] (3.67,-1) circle (4pt);
\draw[fill=black!40] (4.33,-1) circle (4pt);
\draw[fill=black!60] (7,0) circle (4pt);

\draw[fill=white] (3,0) circle (4pt);

\draw[fill=white] (4,0) circle (6pt);
\draw[fill=white] (4,0) circle (4pt);

\draw[bend left=20, postaction=decorate] (8, -.5) to (8,-2.5);
\node at (8.75,-1.5) {$S_6$};

\node at (2,-1.6) {$a$};
\node at (4.33,-1.5) {$b$};
\node at (7,-.6) {$c$};
\node at (3.67,-1.5) {$d$};

\end{scope}

\begin{scope}[shift={(0,-9)}]

\draw (0,0) -- (7,0);

\draw (4,0) -- (3.67,-1);
\draw (4,0) -- (4.33,-1);

\foreach \i in {1,2,5,6}
{
\draw[fill] (\i,0) circle (4pt);
}

\draw[fill=black!20] (0,0) circle (4pt);
\draw[fill=black] (3.67,-1) circle (4pt);
\draw[fill=black!40] (4.33,-1) circle (4pt);
\draw[fill=black!60] (7,0) circle (4pt);

\draw[fill=white] (3,0) circle (6pt);
\draw[fill=white] (3,0) circle (4pt);

\draw[fill=white] (4,0) circle (6pt);
\draw[fill=white] (4,0) circle (4pt);

\draw[bend left=20, postaction=decorate] (8, -.5) to (8,-2.5);
\node at (8.85,-1.5) {$S_{11}$};

\node at (0,-.6) {$a$};
\node at (4.33,-1.5) {$b$};
\node at (7,-.6) {$c$};
\node at (3.67,-1.5) {$d$};

\end{scope}

\begin{scope}[shift={(0,-12)}]

\draw (-1,0) -- (8,0);

\foreach \i in {1,2,5,6}
{
\draw[fill] (\i,0) circle (4pt);
}

\draw[fill=black!20] (0,0) circle (4pt);
\draw[fill=black] (-1,0) circle (4pt);
\draw[fill=black!40] (8,0) circle (4pt);
\draw[fill=black!60] (7,0) circle (4pt);

\draw[fill=white] (4,0) circle (6pt);
\draw[fill=white] (3,0) circle (4pt);

\draw[fill=white] (4,0) circle (6pt);
\draw[fill=white] (4,0) circle (4pt);

\node at (0,-.6) {$a$};background
\node at (8,-.5) {$b$};
\node at (7,-.6) {$c$};
\node at (-1,-.5) {$d$};

\end{scope}

%
%
\begin{scope}[shift={(14,0)}]

\draw (1,0) -- (7,0);

\draw (3,0) -- (2.75,-1);
\draw (3,0) -- (3.25,-1);
\draw (4,0) -- (4,-1);
\draw (5,0) -- (5,-1);

\foreach \i in {1,2,5,6,7}
{
\draw[fill] (\i,0) circle (4pt);
}

\draw[fill=black!20] (2.75,-1) circle (4pt);
\draw[fill=black!40] (3.25,-1) circle (4pt);
\draw[fill=black!60] (4,-1) circle (4pt);
\draw[fill=black!80] (5,-1) circle (4pt);

\draw[fill=white] (3,0) circle (4pt);

\draw[fill=white] (4,0) circle (6pt);
\draw[fill=white] (4,0) circle (4pt);

\draw[bend left=20, postaction=decorate] (8, -.5) to (8,-2.5);
\node at (8.75,-1.5) {$S_8$};

\node at (2.75,-1.6) {$a$};
\node at (3.25,-1.5) {$b$};
\node at (4,-1.6) {$c$};
\node at (5,-1.5) {$d$};

\end{scope}

\begin{scope}[shift={(14,-3)}]

\draw (1,0) -- (7,0);

\draw (3,0) -- (2.75,-1);
\draw (3,0) -- (3.25,-1);
\draw (4,0) -- (3.75,-1);
\draw (4,0) -- (4.25,-1);

\foreach \i in {1,2,3,5,6,7}
{
\draw[fill] (\i,0) circle (4pt);
}

\draw[fill=black!20] (2.75,-1) circle (4pt);
\draw[fill=black!40] (3.25,-1) circle (4pt);
\draw[fill=black!60] (3.75,-1) circle (4pt);
\draw[fill=black!80] (4.25,-1) circle (4pt);

\draw[fill=white] (3,0) circle (4pt);

\draw[fill=white] (4,0) circle (6pt);
\draw[fill=white] (4,0) circle (4pt);

\draw[bend left=20, postaction=decorate] (8, -.5) to (8,-2.5);
\node at (8.75,-1.5) {$S_9$};

\node at (2.75,-1.6) {$a$};
\node at (3.25,-1.5) {$b$};
\node at (3.75,-1.6) {$c$};
\node at (4.25,-1.5) {$d$};

\end{scope}

\begin{scope}[shift={(14,-6)}]

\draw (0,0) -- (7,0);

\draw (3,0) -- (3,-1);
\draw (4,0) -- (3.75,-1);
\draw (4,0) -- (4.25,-1);

\foreach \i in {1,2,3,5,6,7}
{
\draw[fill] (\i,0) circle (4pt);
}

\draw[fill=black!20] (0,0) circle (4pt);
\draw[fill=black!40] (3,-1) circle (4pt);
\draw[fill=black!60] (3.75,-1) circle (4pt);
\draw[fill=black!80] (4.25,-1) circle (4pt);

\draw[fill=white] (3,0) circle (6pt);
\draw[fill=white] (3,0) circle (4pt);

\draw[fill=white] (4,0) circle (6pt);
\draw[fill=white] (4,0) circle (4pt);

\draw[bend left=20, postaction=decorate] (8, -.5) to (8,-2.5);
\node at (8.9,-1.5) {$S_{10}$};

\node at (0,-.6) {$a$};
\node at (3,-1.5) {$b$};
\node at (3.75,-1.6) {$c$};
\node at (4.25,-1.5) {$d$};

\end{scope}

\begin{scope}[shift={(14,-9)}]

\draw (-1,0) -- (8,0);

\draw (4,0) -- (4,-1);

\foreach \i in {1,2,3,5,6,7}
{
\draw[fill] (\i,0) circle (4pt);
}

\draw[fill=black!20] (0,0) circle (4pt);
\draw[fill=black!40] (-1,0) circle (4pt);
\draw[fill=black!60] (4,-1) circle (4pt);
\draw[fill=black!80] (8,-0) circle (4pt);

\draw[fill=white] (3,0) circle (6pt);
\draw[fill=white] (3,0) circle (4pt);

\draw[fill=white] (4,0) circle (6pt);
\draw[fill=white] (4,0) circle (4pt);

\node at (0,-.6) {$a$};
\node at (-1,-.5) {$b$};
\node at (4,-1.6) {$c$};
\node at (8,-.5) {$d$};

\end{scope}background

\end{tikzpicture}
\end{center}

\caption{Two different trees that are converted to $P_{10}$ and $Y_{11}$, respectively, using the algorithm and the tree surgeries $S_i$  defined in Section \ref{sec:proof}. The value of $\Tbest$  monotonically increases throughout.  The white vertices are the foci, with $\Tbest$ achieved by the circled vertex. Certain vertices are labeled by $a,b,c,d$ so they can be tracked across steps.}

\label{fig:examples}

\end{figure}
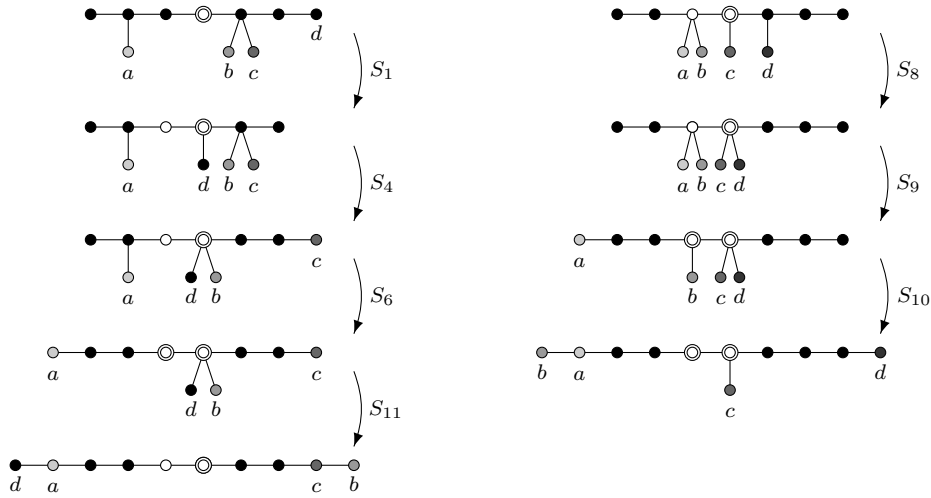

\section{Preliminaries}

For an introduction to the  theory of exact stopping rules, see \cite{lovasz+winkler}. 
To prove Theorem \ref{thm:bestmix}, we need the formula for access times from singleton distributions to the stationary distribution. Therefore, we limit ourselves to describing results from \cite{lovasz+winkler} needed to calculate these values. 

\subsection{Pessimal vertices}
\label{sec:pessimal}

A \emph{$v$-pessimal vertex} $v'$ is a vertex that  satisfies
$H(v',v)=\max_{w\in V}{H(w,v)}.$
Note that pessimal vertices are not necessarily unique; for example, every leaf of the star $S_n$ is pessimal for the central vertex.
We use $\psq{v}$ to denote a pessimal vertex for  $\p{v}$, and
we  employ the notation 
$$\hp(v) = H(\p{v},v) = \max_{w \in V} H(w,v),$$
so that we can refer to the pessimal  hitting time to $v$ in a manner agnostic of the choice of pessimal vertex $\p{v}$.
This allows us to define the lightweight notation
\begin{equation}
\label{eqn:pessdiff}
	\pdelt(v)=\hp_{\nG}(v)-\hp_G(v)
\end{equation}
which shields us from the fact that the $v$-pessimal vertices in $G$ and $\nG$ may be distinct.

\subsection{Calculating mixing times on trees}
Next we address the calculation of  $H(v, \pi)$, the expected length of an optimal stopping rule  from the singleton distribution on $v$ to the stationary distribution $\pi$. We refer to $H(v,\pi)$ as the \emph{mixing time from $v$}. The following result holds for any graph $G$.
\begin{theorem}[\cite{lovasz+winkler}]
\label{thm:mix}
The expected length of an optimal mixing rule starting from vertex $v$ is 
\begin{equation}
\label{eqn:mix}
H(v, \pi) 
= H(\p{v}, v) - \sum_{u \in V} \pi_u H(u,v)
= \hp(v) - \sum_{j \in V} \pi_u H(u,v). 
\end{equation}
\end{theorem}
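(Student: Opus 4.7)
The plan is to reduce to the Lov\'asz-Winkler theory of halting states. Recall that a vertex $h$ is a \emph{halting state} of a stopping rule $\Gamma$ if the walk never visits $h$ strictly before $\Gamma$ halts. For any $(v,\pi)$ stopping rule $\Gamma$ and any vertex $h$, I would first establish the inequality
\begin{equation*}
E[\Gamma] \geq H(v,h) - H(\pi,h),
\end{equation*}
with equality exactly when $h$ is a halting state of $\Gamma$. This follows by a concatenation argument: appending to $\Gamma$ a continuation walk that runs until reaching $h$ produces a (not-necessarily-first) hit time of $h$ with expected length $E[\Gamma] + H(\pi,h)$, which bounds $H(v,h)$ from above; the bound is tight exactly when the original walk had no opportunity to visit $h$ prior to time $\Gamma$. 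The companion fact that an optimal rule always admits a halting state is the core contribution of \cite{lovasz+winkler}, established via exit-frequency LP duality; I would invoke rather than reprove it. Together these give $H(v,\pi) = \max_h [H(v,h) - H(\pi,h)]$, and any maximizing $h$ is necessarily a halting state of the optimal rule.

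The second ingredient is the reversibility identity
\begin{equation*}
H(v,h) - H(\pi,h) = H(h,v) - H(\pi,v),
\end{equation*}
valid on any reversible chain, and in particular on every simple random walk on an undirected graph. I would prove this via the fundamental matrix $Z = \sum_{t \geq 0} (P^t - \Pi)$: a short computation yields $H(v,h) = (Z_{hh} - Z_{vh})/\pi_h$, and $H(\pi,h) = Z_{hh}/\pi_h$ since $\sum_v \pi_v Z_{vh} = 0$, so $H(v,h) - H(\pi,h) = -Z_{vh}/\pi_h$. Detailed balance $\pi_v P^t_{vh} = \pi_h P^t_{hv}$ lifts immediately to $\pi_v Z_{vh} = \pi_h Z_{hv}$, which symmetrizes the expression.

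Combining the two ingredients, $\max_h [H(v,h) - H(\pi,h)] = \max_h H(h,v) - H(\pi,v) = \hp(v) - H(\pi,v)$, with the maximum attained precisely at the $v$-pessimal vertices $\p{v}$. Hence
\begin{equation*}
H(v,\pi) = \hp(v) - \sum_{u \in V} \pi_u H(u,v),
\end{equation*}
which is (\ref{eqn:mix}); as a byproduct, $\p{v}$ is a halting state of the optimal mixing rule from $v$. The only non-routine input is the halting-state existence theorem from \cite{lovasz+winkler}; everything else is bookkeeping. If one wished to sidestep the LP-duality machinery, an alternative would be to construct an explicit \emph{filling rule} directed at $\p{v}$ and verify that its expected length matches the right-hand side directly.
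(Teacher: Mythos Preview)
The paper does not prove this theorem; it quotes it from \cite{lovasz+winkler} as a black box. So there is no in-paper argument to compare against. Your derivation is the standard one: combine the halting-state characterization $H(v,\pi)=\max_h[H(v,h)-H(\pi,h)]$ with the reversibility identity $H(v,h)-H(\pi,h)=H(h,v)-H(\pi,v)$, and read off that the maximizing $h$ is exactly a $v$-pessimal vertex. This is correct, and the byproduct that $\p{v}$ is a halting state of the optimal mixing rule is a nice bonus that the paper itself does not state explicitly.

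One technical caveat worth flagging: you define $Z=\sum_{t\ge 0}(P^t-\Pi)$, but this series diverges for periodic chains, and every tree is bipartite. The formulas $H(v,h)=(Z_{hh}-Z_{vh})/\pi_h$ and $\pi_v Z_{vh}=\pi_h Z_{hv}$ survive if you replace $Z$ by the group inverse of $I-P$ (or equivalently work with the lazy chain, as the paper's introduction suggests), but as written the argument needs a sentence acknowledging this. Alternatively, you can avoid $Z$ entirely: the identity $H(v,h)-H(\pi,h)=H(h,v)-H(\pi,v)$ follows from the cycle-reversal identity $H(a,b)+H(b,c)+H(c,a)=H(a,c)+H(c,b)+H(b,a)$ for reversible chains, averaged over $c$ against $\pi$ and combined with the random-target lemma (which makes $\sum_w\pi_w H(x,w)$ independent of $x$). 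That route sidesteps any convergence issue and keeps the proof self-contained.
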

The very useful formula \eqref{eqn:mix} allows us to calculate the access time $H(v, \pi)$ via a linear combination of vertex-to-vertex hitting times.
In other words, if we know all the pairwise hitting times, it is easy to calculate the access time $H(v, \pi)$ for each  starting vertex $v \in V$. 

For a given graph, we can determine every hitting time $H(u,v)$ by solving a system of linear equations. In the special case of trees,  we have an explicit formula in terms of the distances and degrees of the graph. Variations on this formula appear in the literature (cf. \cite{beveridge, beveridge+wang, dtw, gw} and Chapter 5 of \cite{aldous+fill}). This paper builds on  the results in \cite{beveridge,beveridge+wang}, so we opt for that formulation. From here forward, we assume that $G=(V,E)$ is a tree on $n \geq 4$ vertices.

We start with a well-known result about the hitting time between adjacent vertices in trees.
If $uv \in E$,  then removing this edge breaks $G$ into two disjoint trees $G_u$ and $G_v$ where $u \in V(G_u)$ and $v \in V(G_v)$. We define $V_{u:v} = V(G_u)$ and $V_{v:u} = V(G_v)$. We think of $V_{u:v}$ as the set of vertices that are closer to $u$ than to $v$. For these adjacent vertices, let $F$ denote the induced tree on $V_{u:v} \cup \{   v \}$. We have
\begin{equation} \label{eq:adjhtime}
		H(u,v)= \Ret_{F} (v) - 1 = \sum_{w\in V_{u:v}}{\deg(w)} = 2|V_{u:v}|-1,
\end{equation}
where the first equality holds by the well-known equality $\Ret(v) = 2|E|/\deg(v)$.
Equation \eqref{eq:adjhtime} encodes a very useful property of trees: the hitting time from a vertex $u$ to an adjacent vertex $v$ only depends on $|V_{u:v}|$, independent of the particular structure of the tree.
This equation can be used to  reveal that when  $G=P_n$ is the path ($v_1,v_2,\dots,v_n$), the hitting times are
\begin{equation} \label{eq:htimepath2}
H_{P_n}(v_i,v_j)=
\left\{
\begin{array}{rl}
(i-1)^2-(j-1)^2 & i\leq j,\\
(n-j)^2-(n-i)^2 & i>j.
\end{array}
\right.
\end{equation}
For example, the first of these formulas is calculated by repeatedly using equation \eqref{eq:adjhtime} to evaluate
$H(v_i, v_j) = H(v_i, v_{i+1}) + H(v_{i+1}, v_{i+2}) + \cdots + H(v_{j-1}, v_{j})$. 
A similar argument produces a formula for the hitting times of an arbitrary tree, but first we need some additional notation. Let  $u,v,w \in V$. Define $d(u,v)$ to be the distance between these two vertices and define
\begin{displaymath}
\ell(u,v;w)=\frac{1}{2}(d(u,w)+d(v,w)-d(u,v))
\end{displaymath}
to be the length of the intersection of the $(u,w)$-path and the $(v,w)$-path. 

\begin{lemma}[\cite{beveridge}]
\label{thm:htime}
For any pair of vertices $v_i, v_j$, we have
\begin{equation} \label{eq:htime}
				H(v_i,v_j)=\sum_{v\in V}{\ell(v_i,v;v_j)\deg(v)}.
\end{equation}
\end{lemma}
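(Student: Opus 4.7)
The plan is to decompose $H(v_i, v_j)$ as a sum along the unique $(v_i, v_j)$-path in the tree, apply the adjacent-hitting-time identity \eqref{eq:adjhtime} edge by edge, and then swap the order of summation, grouping by vertex rather than by edge.

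First, let $P = (u_0, u_1, \ldots, u_k)$ with $u_0 = v_i$ and $u_k = v_j$ denote the unique $(v_i, v_j)$-path, where $k = d(v_i, v_j)$. Since any walk from $v_i$ can reach $v_j$ only after traversing each edge $(u_t, u_{t+1})$ for the first time from $u_t$, the strong Markov property yields the telescoping identity
\[
H(v_i, v_j) = \sum_{t=0}^{k-1} H(u_t, u_{t+1}).
\]
I would then apply \eqref{eq:adjhtime} to each summand to obtain $H(u_t, u_{t+1}) = \sum_{w \in V_{u_t : u_{t+1}}} \deg(w)$, and swap the order of summation. The coefficient of $\deg(w)$ in the resulting expression is the number of edges of $P$ for which $w$ lies on the $v_i$-side.

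The key step is identifying this edge count with $\ell(v_i, w; v_j)$. For each $w \in V$, let $u_{s(w)}$ be the vertex of $P$ nearest to $w$; equivalently, $u_{s(w)}$ is where the $(w, v_j)$-path first meets $P$, after which it simply follows $P$ to $v_j$. One checks directly that $w$ lies on the $u_t$-side of $(u_t, u_{t+1})$ if and only if $t \geq s(w)$, giving exactly $k - s(w)$ such edges. A one-line computation from $\ell(v_i, w; v_j) = \tfrac{1}{2}(d(v_i, v_j) + d(w, v_j) - d(v_i, w))$, using $d(v_i, v_j) = k$, $d(w, v_j) = d(w, u_{s(w)}) + (k - s(w))$, and $d(v_i, w) = s(w) + d(u_{s(w)}, w)$, then yields $\ell(v_i, w; v_j) = k - s(w)$. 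Summing $\deg(w) \cdot (k - s(w))$ over $w \in V$ delivers \eqref{eq:htime}.

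I do not foresee a genuine obstacle: the argument is a combinatorial regrouping of a telescoping sum, and the only subtlety is recognizing $\ell(v_i, w; v_j)$ as the length of the shared suffix toward $v_j$ of the $(v_i, v_j)$- and $(w, v_j)$-paths, which makes the identification with the edge-side count immediate.
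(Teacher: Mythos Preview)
The paper does not actually prove this lemma: it is quoted from \cite{beveridge}, and the only in-text justification is the sentence ``A similar argument produces a formula for the hitting times of an arbitrary tree,'' referring back to the path computation that telescopes $H(v_i,v_j)$ along the $(v_i,v_j)$-path and applies \eqref{eq:adjhtime} edge by edge. Your proposal carries out exactly that hinted argument in full, and it is correct: the telescoping identity, the use of \eqref{eq:adjhtime}, the swap of summation, and the identification of the coefficient of $\deg(w)$ with $k-s(w)=\ell(v_i,w;v_j)$ are all valid. So there is nothing to compare against beyond the hint, and your approach matches it.
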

Theorem \ref{thm:mix} and Lemma \ref{thm:htime} are the fundamental tools in our proof of Theorem \ref{thm:bestmix}. Together they provide a simple way to calculate state-to-state hitting times and mixing times.

\subsection{The foci of a tree}

The center and the barycenter are two well-established notions of centrality.  The center of a tree $G$ is the vertex (or two adjacent vertices) that achieves $\min_{u \in V} \max_{v} d(v,u)$. 
The center does not appear to have any central properties with respect to random walks on trees.
The barycenter is the vertex (or two adjacent vertices) that achieves $\min_{u \in V} \sum_{v \in V} d(v,u)$. 
 Proposition 1 of \cite{beveridge} shows that that the barycenter is the ``average'' center for random walks on trees: this vertex also achieves $\min_{u} \sum_{v \in V} \pi_v H(v,u)$. A third type of centrality for trees was introduced in \cite{beveridge}: the \emph{focus}  is the ``extremal'' center for random walks.

\begin{definition} \label{def:focus} {\bf{(\cite{beveridge})}}
A vertex $v$ achieving $\hp(v) = \min_{w \in V} \hp(w)$ is called a \emph{primary focus} of $G$. If all $v$-pessimal vertices are contained in a single subtree of $G-v$, then the unique $v$-neighbor $u$ in that subtree is also a focus of $G$. If $\hp(u)=\hp(v)$, then $u$ is also a primary focus. Otherwise $u$ is a \emph{secondary focus}. 
\end{definition}

Every tree has either one focus, in which case it is \emph{focal}, or two adjacent foci, in which case it is \emph{bifocal}. 
We have the following two theorems, which relate the foci to mixing walks on $G$.
\begin{theorem}[\cite{beveridge}]
\label{thm:tifocus}
If $G$ is focal with focus $u$, then for all $v$
$
H(v, \pi) = H(v,u) + H(u,\pi).
$
If $G$ is bifocal with foci $u$ and $w$, then for $v \in V_{u:w}$,
$
H(v, \pi) = H(v,u) + H(u,\pi)
$
and for $v \in V_{w:u}$,
$
H(v, \pi) = H(v,w) + H(w,\pi).
$
\end{theorem}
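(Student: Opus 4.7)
The plan is to reduce the theorem to a purely pessimal-hitting-time identity and then exploit the defining property of a focus to establish it.

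I first observe that, on any tree, the quantity $C(z) := \sum_w \pi_w H(w,z)$ is independent of $z$. For adjacent $z, z'$, additivity of hitting times along tree paths gives $H(w, z') - H(w, z) = H(z, z')$ when $w \in V_{z:z'}$ and $H(w, z') - H(w, z) = -H(z', z)$ when $w \in V_{z':z}$. Using $\pi_w = \deg(w)/(2(n-1))$ together with the handshake lemma on the subtree $V_{z:z'}$ (which has a single boundary edge) and \eqref{eq:adjhtime} gives $\pi(V_{z:z'}) = (2|V_{z:z'}| - 1)/(2(n-1)) = H(z',z)/(2(n-1))$, and symmetrically $\pi(V_{z':z}) = H(z,z')/(2(n-1))$. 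The two contributions cancel, so $C(z) = C(z')$; by connectedness $C$ is constant. Theorem \ref{thm:mix} then reduces the claim to proving $\hp(v) = \hp(u) + H(v,u)$ for the focus $u$ appropriate to $v$.

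For the upper bound $\hp(v) \leq \hp(u) + H(v,u)$, fix any $w \in V$ and let $m$ be the median of $\{u, v, w\}$ in the tree. Additivity of hitting times along the three paths meeting at $m$ yields
$$H(w, v) = H(w, u) + H(u, v) - \bigl(H(m,u) + H(u,m)\bigr).$$
Since commute times are nonnegative, $H(w, v) \leq H(w, u) + H(u, v) \leq \hp(u) + H(v,u)$, and maximizing over $w$ proves the bound. For the matching lower bound, it suffices to exhibit a $u$-pessimal vertex $u'$ for which $u$ lies on the $(u', v)$-path, for then $H(u', v) = H(u', u) + H(u, v) = \hp(u) + H(v, u)$, forcing $\hp(v) \geq \hp(u) + H(v, u)$. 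This is where Definition \ref{def:focus} enters. In the focal (unifocal) case, the $u$-pessimal vertices are not all confined to a single subtree of $G - u$ (else the root of that subtree would be a secondary focus, contradicting unifocality), so some $u'$ lies in a subtree of $G - u$ not containing $v$. In the bifocal case with foci $u, w$ and $v \in V_{u:w}$, the $u$-pessimal vertices all lie in $V_{w:u}$, while $v \in V_{u:w} \setminus \{u\}$ lies in a different subtree of $G - u$; again $u$ separates $u'$ from $v$. The case $v \in V_{w:u}$ is symmetric, using the focus $w$ in place of $u$.

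The main obstacle is the lower bound: verifying that Definition \ref{def:focus} reliably produces a $u$-pessimal vertex separated from $v$ by $u$, especially in the bifocal case where one must keep straight which focus serves which half of the tree and invoke the precise ``all pessimals in the sibling subtree'' characterization. The first two steps are routine tree identities once the constancy of $C(z)$ and the median decomposition are in hand.
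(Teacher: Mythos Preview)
Your reduction in the first paragraph is incorrect: the quantity $C(z) := \sum_w \pi_w H(w,z)$ is \emph{not} constant on trees. On $P_3$ with center $c$ and leaf $\ell$ one computes $C(c)=1/2$ but $C(\ell)=5/2$. (The paper itself implicitly confirms this: it notes that the barycenter is the vertex \emph{minimizing} $\sum_w \pi_w H(w,\cdot)$, which would be meaningless if the sum were constant.) The slip is in your application of \eqref{eq:adjhtime}: one has $2|V_{z:z'}|-1 = H(z,z')$, not $H(z',z)$. With the correct labels your difference becomes
\[
C(z')-C(z)=\pi(V_{z:z'})\,H(z,z')-\pi(V_{z':z})\,H(z',z)=\frac{H(z,z')^2-H(z',z)^2}{2(n-1)},
\]
which does not vanish. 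Because of this, the claim you reduce to, $\hp(v)=\hp(u)+H(v,u)$, is also false (same $P_3$ counterexample: $\hp(\ell)=4$ but $\hp(c)+H(\ell,c)=2$).

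The repair is not far off, but it requires two corrections working in tandem. First, the right potential identity (valid for any irreducible chain, via the random target lemma) is $C(v)-C(u)=H(u,v)-H(v,u)$. Second, your pessimal/median argument, once the $H(u,v)$ versus $H(v,u)$ directions are kept straight, actually proves $\hp(v)=\hp(u)+H(u,v)$, with $H(u,v)$ rather than $H(v,u)$. Combining these via Theorem~\ref{thm:mix} gives
\[
H(v,\pi)-H(u,\pi)=\bigl(\hp(v)-\hp(u)\bigr)-\bigl(C(v)-C(u)\bigr)=H(u,v)-\bigl(H(u,v)-H(v,u)\bigr)=H(v,u),
\]
which is the desired statement. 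So your overall architecture (reduce to a pessimal identity, then use the defining property of a focus to locate a well-placed pessimal vertex) is sound; the error is the false constancy claim and the attendant $H(u,v)/H(v,u)$ confusion.

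As for comparison with the paper: Theorem~\ref{thm:tifocus} is quoted here from \cite{beveridge} and is not proved in the present paper, so there is no in-paper argument to compare against.
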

\begin{theorem}[\cite{beveridge}] \label{thm:tbmvertex}
If $H(w,\pi) = \tbm(G)$ then $w$ is a focus of $G$. 
For bifocal $G$ with foci $u$ and $v$, if $H(\p{v},u) < H(\p{u},v)$ then $v$ is the unique vertex achieving $\tbm(G)$.
If $H(\p{v},u) > H(\p{u},v)$ then $u$ is the unique vertex achieving $\tbm(G)$. If $H(\p{v},u) = H(\p{u},v)$ then both vertices achieve $\tbm$.
\end{theorem}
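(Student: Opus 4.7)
The plan is to establish both assertions by manipulating the mixing formula \eqref{eqn:mix} together with the decomposition in Theorem \ref{thm:tifocus}. For the first assertion, I split into the focal and bifocal cases. In the focal case with focus $u$, Theorem \ref{thm:tifocus} gives $H(v,\pi) = H(v,u) + H(u,\pi) \geq H(u,\pi)$ with equality iff $v=u$. In the bifocal case with foci $u,v$, the analogous decomposition shows $H(x,\pi) \geq H(u,\pi)$ on $V_{u:v}$ with equality only at $u$, and $H(x,\pi) \geq H(v,\pi)$ on $V_{v:u}$ with equality only at $v$. Either way, any $\tbm$-achieving vertex must be a focus.

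For the second assertion I would compute $H(u,\pi) - H(v,\pi)$ directly. Two ingredients drive the computation. The first is the standard reversibility identity
\[
\sum_{x \in V} \pi_x \bigl( H(x,u) - H(x,v) \bigr) = H(v,u) - H(u,v),
\]
which follows from the fundamental matrix symmetry $\pi_u Z_{uv} = \pi_v Z_{vu}$ (or equivalently from the random target lemma). Subtracting the two instances of \eqref{eqn:mix} for $H(u,\pi)$ and $H(v,\pi)$ and applying this identity yields
\[
H(u,\pi) - H(v,\pi) = \bigl(\hp(u) - \hp(v)\bigr) + H(u,v) - H(v,u).
\]
The second ingredient is a pessimal-location claim for bifocal trees: every $v$-pessimal vertex lies in $V_{u:v}$ and every $u$-pessimal vertex lies in $V_{v:u}$. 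The former is immediate from Definition \ref{def:focus}, since $u$ is a focus neighbor of $v$ precisely because all $v$-pessimal vertices sit on its side. The latter I would prove by contradiction: if $\p{u} \in V_{u:v}$, the $(\p{u},v)$-path crosses $uv$, giving $H(\p{u},v) = \hp(u) + H(u,v) > \hp(u)$; then $\hp(v) \geq H(\p{u},v) > \hp(u)$, contradicting the primacy of $v$.

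With these locations, both $(\p{u},u)$ and $(\p{v},v)$ paths decompose along the edge $uv$, giving $\hp(u) = H(\p{u},v) + H(v,u)$ and $\hp(v) = H(\p{v},u) + H(u,v)$. Substituting into the above difference produces the clean cancellation
\[
H(u,\pi) - H(v,\pi) = H(\p{u},v) - H(\p{v},u),
\]
from which all three cases of the theorem follow at once: the sign of the right-hand side selects the unique minimizer (necessarily a focus, by the first assertion), while equality means both $u$ and $v$ achieve $\tbm$. The step I expect to require the most care is the pessimal-location claim, especially handling the two-primary and primary-secondary subcases of bifocality under a single uniform argument; the reversibility identity, while standard, also deserves an explicit derivation since the final cancellation hinges on it.
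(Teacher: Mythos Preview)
This theorem is not proved in the present paper; it is quoted from \cite{beveridge} and used as a black box, so there is no in-paper proof to compare your proposal against.

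That said, your argument is correct. The first assertion is an immediate consequence of Theorem~\ref{thm:tifocus}, exactly as you say. For the second, your derivation of
\[
H(u,\pi)-H(v,\pi)=H(\p{u},v)-H(\p{v},u)
\]
via the reversibility identity $H(\pi,u)-H(\pi,v)=H(v,u)-H(u,v)$ together with the decompositions $\hp(u)=H(\p{u},v)+H(v,u)$ and $\hp(v)=H(\p{v},u)+H(u,v)$ is sound, and the three cases follow at once. Your pessimal-location claim also works uniformly across the primary/secondary subcases: whichever of $u,v$ is primary, a pessimal vertex for the other focus lying on the wrong side of the edge $uv$ would force $\hp$ at the primary focus to strictly exceed itself. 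The one point to write out carefully is the reversibility identity; it is indeed equivalent to the symmetry $\pi_i Z_{ij}=\pi_j Z_{ji}$ of the fundamental matrix for reversible chains, so your justification is on target.
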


If $H(w,\pi) = \tbm(G)$ then we say that $w$ is a \emph{best mix focus}. Note that a tree can have one or two best mix foci. In the former case, the best mix focus could be either the primary focus or the secondary focus: it depends on the relative sizes of $H(\p{v},u)$ and $H(\p{u},v)$.  This unusual criterion is what makes handling the best mixing time so delicate. A small change to the tree structure can move the location of the best mix focus, even when the foci do not change.

\subsection{The best mixing time for stars, paths and odd wishbones}

\label{sec:bestmix-extreme}

As our first application of these theorems and lemmas, let us calculate the best mixing time for the star, the path (both even and odd $n$) and the wishbone (for odd $n$). This will justify the expressions that appear in Theorem \ref{thm:mix}.
Clearly, the central vertex $v$ of the star $S_n=K_{1,n-1}$ is its unique focus. 
Let $w \neq v$ be any other vertex. We have $\pi_v = 1/2$ and $\pi_w = 1/2(n-1)$. Furthermore, $H(w,v) = \hp(v) = 1$ and obviously $H(v,v)=0$.  Therefore
\[
\tbm (S_n) = \hp(v) - \sum_{w \in V} \pi_w H(w,v) = 1 - \frac{1}{2(n-1)} (n-1) = \frac{1}{2}.
\]
In fact, it is easy to see that $S_n$ is the unique tree on $n$ vertices that achieves this value. Let $G \neq S_n$ and let $v$ be a best mix focus. Then $\pi_v  = \deg(v)/2(n-1)< 1/2$ because the star is the unique tree with a vertex of degree $n-1$.
An optimal mixing rule started at $v$ must exit with probability at least $1 - \pi_v > 1/2$, otherwise the ending distribution will weight vertex $v$ too heavily. This proves that the star is the unique minimizing structure for Theorem \ref{thm:bestmix}.

Next, we calculate $\tbm(P_n)$.  Label the vertices as $(v_1, v_2, \ldots , v_n)$.
We first consider odd $n=2r+1$. By symmetry, the unique focus is $v_{r+1}$. By Theorem \ref{thm:mix} and equation \eqref{eq:htimepath2},
\begin{eqnarray*}
\Tbest(P_{2r+1}) &=& \hp(v_{r+1}) - \sum_{i=1}^{2r+1} \pi_i H(v_i, v_{r+1}) 
\, = \,  \hp(v_{r+1}) - 2 \sum_{i=1}^{r} \pi_i H(v_i, v_{r+1}) \\
&=& r^2 - \frac{1}{2r} r^2 - \frac{1}{r} \sum_{i=2}^r \left( r^2 - (i-1)^2 \right)
\,=\, \frac{2r^2+1}{6} \, = \, \frac{n^2-2n+3}{12},
\end{eqnarray*}
where the second equality follows from the symmetry of the odd path. The calculation for the even path is a bit tougher. Setting $n=2r$, we have two best mix foci $v_r, v_{r+1}$ by symmetry. We will take $v_{r+1}$ as the best mix focus in our calculation. We have
\begin{eqnarray*}
\Tbest(P_{2r}) 
&=& \hp(v_{r+1}) - \sum_{i=1}^{r} \pi_i H(v_i, v_{r+1}) - \sum_{i=r+2}^{2r} \pi_i H(v_i, v_{r+1}) \\
&=& \hp(v_{r+1}) - 2\sum_{i=1}^{r-1} \pi_i H(v_i, v_{r}) - \sum_{i=1}^{r} \pi_i H(v_r, v_{r+1}) \\
&=& r^2 - \frac{2}{4r-2} (r-1)^2 - \frac{4}{4r-2} \sum_{i=2}^{r-1} \left( (r-1)^2 - (i-1)^2 \right) - \frac{1}{2} (2r-1) \\
&=& \frac{2r^2 + 4r-3}{6} \, = \, \frac{n^2 +4n - 6}{12}
\end{eqnarray*}
where we use the symmetry of the path and  $H(v_i, v_{r+1}) = H(v_i, v_r) + H(v_r,v_{r+1})$ for $1 \leq i < r$ in the third equality.

Finally, we determine $\tbm(Y_{2r+1})$. Let the spine be $(w_1, w_2, \ldots, w_{2r})$ with a leaf $x$ adjacent to $w_{r+1}$. Arguing similarly to the even path, we have 
\begin{eqnarray*}
\tbm(Y_{2r+1}) &=& \hp(w_{r+1}) - \sum_{i=1}^{r} \pi_i H(w_i, w_{r+1}) - \sum_{i=r+2}^{2r} H(w_i, w_{r+1}) - \pi_x H(x,w_{r+1}) \\
&=&
\hp(w_{r+1}) - 2\sum_{i=1}^{r} \pi_i H(w_i, w_r) - \sum_{i=1}^{r} H(v_r, w_{r+1}) - \pi_x H(x,w_{r+1}) \\
&=& r^2 - \frac{1}{2r} (r-1)^2 - \frac{1}{r} \sum_{i=2}^{r-1} ((r-1)^2 - (i-1)^2) - \frac{2r-1}{4r} (2r-1) - \frac{1}{4r} \\
&=& \frac{n^2 + 4n - 15}{12}.
\end{eqnarray*}
These calculations show that for odd $n=2r+1 \geq 5$, we have $\tbm(Y_n) = \tbm(P_n)+ (n-3)/2$. Furthermore, we can identify precisely what leads to the wishbone's advantage in equation \eqref{eqn:mix}. The pessimal hitting times to the best mix foci of $Y_{2r+1}$ and $P_{2r+1}$ are both equal to $r^2.$ So the average hitting time to the best mix focus  makes the difference: this quantity is smaller for $Y_n$
 We will see below that this effect can be attributed to the fact that $P_{2n+1}$ has a single focus. This special balance to the tree actually makes it \emph{easier} to mix from the focus.


With these calculations in hand, we have proven the lower bound of Theorem \ref{thm:bestmix}, and calculated the two expressions that appear in the upper bound. The remainder of the paper proves that these upper bounds are correct.

 \section{Caterpillars and Tree Surgeries}
 
 In this section, we show that for a fixed $n \geq 4$, the tree on $n$ vertices that maximizes the best mixing time is a caterpillar.
 Next, we describe our basic techniques, called \emph{tree surgeries}, for making incremental changes to a caterpillars. We introduce some  notation and  useful lemmas for showing that the best mixing time monotonically increases after each tree surgery.

 \subsection{Tree to Caterpillar}

We begin with a lemma showing that $\Tbest$ is maximized by a caterpillar. 
 Recall that a \emph{caterpillar} is a tree such that every vertex is distance at most one from a fixed central path $W = \{ w_1, w_2, \ldots , w_t \}$, called the \emph{spine}. We employ the following notation for talking about sections of the caterpillar. For $2 \leq i \leq t-1$, let $U_i \subset V \backslash W$ denote the set of pendant leaves adjacent to $w_i$. Note that the  spinal leaves are not in $V \backslash W$, so   $w_1 \notin U_2$ and $w_t \notin U_{t-1}$. Finally, we also define  $V_k = U_k \cup \{w_k \}$. See Figure \ref{fig:treetocat} for an example of $w_i$ and $U_i \subset V_i$.

\begin{lemma} \label{lem:cat}
		Let $G$ be a tree on $n$ vertices that is not a caterpillar. There exists a caterpillar $\nG$ on $n$ vertices such that $\tbm(\nG) > \tbm(G)$.
\end{lemma}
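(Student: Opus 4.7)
The plan is to show that whenever $G$ is not a caterpillar we can perform a local modification that strictly increases $\tbm$ and strictly reduces a combinatorial ``non-caterpillar'' measure, so that iterating eventually produces a caterpillar $\widetilde{G}$ with $\tbm(\widetilde{G})>\tbm(G)$.

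To locate the modification, consider the skeleton $T^\star$ obtained from $G$ by deleting every leaf. Since $G$ is not a caterpillar, $T^\star$ is not a path and so has a vertex of degree at least $3$. Pick a vertex $u$ that is a leaf of $T^\star$ but not a leaf of $G$, let $x$ be its unique $T^\star$-neighbor (so $x$ is a branching vertex of $T^\star$), and let $p$ be a pendant leaf of $G$ at $u$. Since $x$ is branching in $T^\star$, there is a branch at $x$ not containing $u$ whose deepest $G$-leaf from $x$ lies at distance at least $2$; let $y$ be the $G$-leaf at the far end of a longest path through $x$ into such a branch. The surgery detaches $p$ from $u$ and reattaches it as a new pendant of $y$, producing $\widetilde{G}$. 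This strictly reduces the number of $G$-vertices lying off a longest path, so iteration terminates at a caterpillar.

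To compare $\tbm(\widetilde{G})$ and $\tbm(G)$, we apply Theorem \ref{thm:mix} to express each $H(v,\pi)$ as $\hp(v)-\sum_u \pi_u H(u,v)$, and then use Lemma \ref{thm:htime} together with \eqref{eq:adjhtime} to evaluate both terms. The surgery strictly lengthens a longest path through $x$, so $\hp_{\widetilde{G}}(w)>\hp_G(w)$ at every focus $w$ of $\widetilde{G}$; meanwhile only $\deg(u)$ and $\deg(y)$ change (by $\pm 1$), and the overlap function appearing in \eqref{eq:htime} shifts only at vertices near $p$, $u$, and $y$. A careful accounting of these contributions yields $H_{\widetilde{G}}(w,\pi)>\tbm(G)$ at each candidate best mix focus $w$ of $\widetilde{G}$. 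The main obstacle is that the best mix focus can move discontinuously even under a single pendant move, so we cannot simply compare $H(\cdot,\pi)$ at a fixed vertex across the two trees. To handle this we invoke Theorem \ref{thm:tbmvertex} to restrict the best mix focus of $\widetilde{G}$ to one of at most two candidates, identify those candidates explicitly from $G$ and the surgery data, and verify the required strict inequality separately at each.
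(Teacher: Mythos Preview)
Your proposal has a genuine gap at the decisive step. The sentence ``A careful accounting of these contributions yields $H_{\widetilde G}(w,\pi)>\tbm(G)$ at each candidate best mix focus $w$ of $\widetilde G$'' \emph{is} the lemma, and you have not carried it out. The difficulty is real: your surgery appends $p$ to a diametral leaf $y$, so it lengthens the diameter, moves the pessimal vertices, and (as you yourself note) can relocate the best mix focus. You must then compare $H_{\widetilde G}(\cdot,\widetilde\pi)$ at a vertex $w$ that may not have been a focus of $G$ against $H_G(\cdot,\pi)$ at the old best mix focus, and nothing in your outline controls that comparison. There are also smaller structural problems: the $T^\star$-neighbour $x$ of a leaf $u$ of $T^\star$ need not be a branching vertex of $T^\star$ (take $T^\star$ a subdivided star and $u$ the tip of a long arm), and your progress measure ``number of vertices off a longest path'' is ambiguous when $G$ has several longest paths and is not clearly decreased by a move that changes which path is longest.

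The paper takes a different and much more robust route: a single global transformation rather than an iteration. The spine $W$ of $\widetilde G$ is taken to be the path between a pessimal vertex $v'$ and a $v'$-pessimal vertex $v''$, and every off-spine subtree is collapsed to a fan of leaves at its root on $W$. The payoff of this specific choice of $W$ is that, by \eqref{eq:adjhtime}, all hitting times \emph{between spine vertices} are preserved, the spine endpoints $w_1,w_t$ remain the pessimal vertices for the foci in both $G$ and $\widetilde G$, and hence (via Theorem~\ref{thm:tbmvertex}) the best mix focus $w_r$ is the same vertex in both trees with $\hp_G(w_r)=\hp_{\widetilde G}(w_r)$. One is then comparing $H(\cdot,\pi)$ at a \emph{common} vertex, and the strict increase drops out immediately from $H_G(v,w_i)\ge 1 = H_{\widetilde G}(v,w_i)$ for each off-spine $v\in U_i$. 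Your pendant-extension move enjoys none of these invariances, which is why the ``careful accounting'' you defer cannot be completed without substantially new ideas.
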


\begin{proof}
We give a simple construction of the caterpillar $\nG$; an  example is shown in Figure \ref{fig:treetocat}.

\begin{figure}[t]
	\begin{center}

\begin{tikzpicture}[scale=.6, every node/.style={font=\small}]

\begin{scope}

\draw (0,0) -- (9,0);


\foreach \i in {0, ..., 9}
{
\draw[fill]  (\i,0) circle (4pt);
}

\draw (4,-2.54) -- (3.5,-1.77) -- (3,-2.54);
\draw (3.5,-1.77) -- (3,-1) -- (2.5,-1.77);
\draw (3,-1) -- (3,0);

\draw[fill=black!25]  (3,-1) circle (4pt);
\draw[fill=black!25]  (2.5,-1.77) circle (4pt);
\draw[fill=black!25]  (3.5,-1.77) circle (4pt);
\draw[fill=black!25]  (3,-2.54) circle (4pt);
\draw[fill=black!25]  (4,-2.54) circle (4pt);

\draw (5,-2) -- (5,0);
\draw[fill=black!50]  (5,-1) circle (4pt);
\draw[fill=black!50]  (5,-2) circle (4pt);

\draw (7,-1) -- (7,0);
\draw (7.5,-1.77) -- (7,-1) -- (6.5,-1.77);
\draw[fill=black!75]  (7,-1) circle (4pt);
\draw[fill=black!75]  (6.5,-1.77) circle (4pt);
\draw[fill=black!75]  (7.5,-1.77) circle (4pt);

\node[left] at (5,-1) {$v$}; 
\node[above] at (-.5,0) {$\p{v}$}; 
\node[above] at (9.5,0) {$\psq{v}$}; 

\draw[gray] (7, -1.6) ellipse (.8 and .9);
\draw[gray] (7, -1.1) ellipse (.95 and 1.75);

\draw node at (7, -2.1) {$U_i$};
\draw node at (7, .3) {$w_i$};
\draw node at (8.33,-1.25 ) {$V_i$};

\node at (.5,-1.5) {$G$};

\end{scope}

\node[single arrow, fill=gray, style={font=\tiny}] at (11.5,0) {\phantom{xxx}};

\begin{scope}[shift={(14,0)}]

\draw (0,0) -- (9,0);


\foreach \i in {0, ..., 9}
{
\draw[fill]  (\i,0) circle (4pt);
}

\draw (2.6,-1) -- (3,0) -- (3.4,-1);
\draw (2.2,-1) -- (3,0) -- (3.8,-1);
\draw (3,-1) -- (3,0);

\draw[fill=black!25]  (3,-1) circle (4pt);
\draw[fill=black!25]  (2.6,-1) circle (4pt);
\draw[fill=black!25]  (3.4,-1) circle (4pt);
\draw[fill=black!25]  (2.2,-1) circle (4pt);
\draw[fill=black!25]  (3.8,-1) circle (4pt);

\draw (4.8,-1) -- (5,0) -- (5.2,-1);
\draw[fill=black!50]  (4.8,-1) circle (4pt);
\draw[fill=black!50]  (5.2,-1) circle (4pt);

\draw (7,-1) -- (7,0);
\draw (7.4,-1) -- (7,0) -- (6.6,-1);
\draw[fill=black!75]  (7,-1) circle (4pt);
\draw[fill=black!75]  (7.4,-1) circle (4pt);
\draw[fill=black!75]  (6.6,-1) circle (4pt);

\node[below] at (4.6,-1) {$v$}; 
\node[above] at (-.5,0) {$\p{v}$}; 
\node[above] at (9.5,0) {$\psq{v}$}; 

\draw[gray] (7, -1.25) ellipse (.75 and .7);
\draw[gray] (7, -.85) ellipse (.9 and 1.5);

\draw node at (7, -1.6) {$U_i$};
\draw node at (7, .3) {$w_i$};
\draw node at (8.25,-1 ) {$V_i$};

\node at (.5,-1.5) {$\nG$};

\end{scope}

\end{tikzpicture}

		\caption{Transforming a tree $G$ into a caterpillar $\nG$ with $\Tbest(G) < \Tbest(\nG)$. Every vertex in $U_i$ becomes adjacent to $w_i$.}
		\label{fig:treetocat}
	\end{center}
\end{figure}
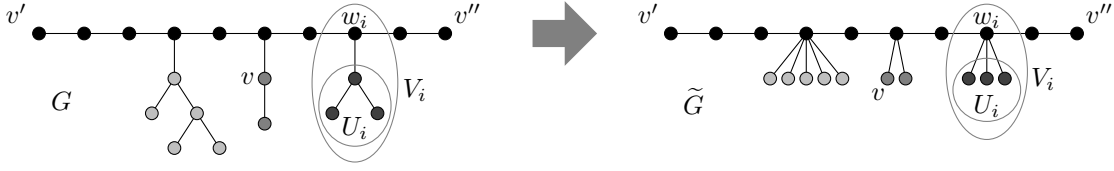

Choose any $v\in V(G)$. Recall from Section \ref{sec:pessimal} that $\p{v}$ is a $v$-pessimal vertex, and that $\psq{v}$ is a $\p{v}$-pessimal vertex. Let 
$
	W=\{\p{v}=w_1,w_2,\dots,w_t=\psq{v}\}
$
be the vertices on the unique $(\p{v},\psq{v})$ path $P \subset G$. These vertices will be the spine of the caterpillar $\nG$. (Note that $v$ is not necessarily a member of $W$.) We replace the subforest attached to each spine vertex $w_i$ with a set of leaves adjacent to $w_i$. 
Define $\nG$ to be the caterpillar with vertex set $V(G)$ and edge set
$
E(\nG)=  E(P) \cup \{uw_i\mid u\in U_i, 1<i<t\}.
$

The foci of $\nG$ are contained in $W$ because leaves cannot be   foci  when $n \geq 3$.  Without loss of generality, let $w_r$ be the best mix focus of $\nG$ and let $\p{w_r} = w_1$. If $G$ is bifocal, the other focus must be $w_{r-1}$ and $\p{w_{r-1}} = w_t$. For $1\leq i\leq t-1$, the quantity   
$$\sum_{v \in V_{w_i:w_{i+1}}} \deg_G(v) = 2 |V_{w_i:w_{i+1}}| - 1 = \sum_{v \in V_{w_i:w_{i+1}}} \deg_{\nG}(v) $$ 
does not change. 
By equation \eqref{eq:adjhtime} we have
\begin{equation}
\label{eqn:nochange}
H_G(w_i, w_j) = H_{\nG}(w_i,w_j)
\end{equation}
for all $1 \leq i,j \leq t$. Considering any non-spine vertex $v \in U_i$, we have
$$
H_{\nG}(v, w_r) - H_G(v,w_r) 
= 
 1 + H_{\nG}(w_i, w_r) -   \big( H_{G}(v, w_i) + H_{G}(w_i, w_r)  \big)
= 1 - H_G(v,w_i)  \leq 0.  
$$
We conclude that $w_1$ is $v_r$-pessimal for both $G$ and $\nG$. Likewise, $w_t$ is $w_{r-1}$-pessimal for both $G$ and $\nG$.  
By equation \eqref{eqn:nochange} and Theorem \ref{thm:tbmvertex}, the vertex $w_r$ is the best mix focus of $\nG$ and $\hp_G(w_r) = \hp_{\nG}(w_r)$.  Finally, 
Theorem \ref{thm:mix} yields
\begin{eqnarray*}
\lefteqn{ \Tbest(\nG) - \Tbest(G) \, = \,
H_{\nG}(w_r, \npi) - H_{G}(w_r, \pi)  
}
\\
&=& \sum_{i=1}^{t} \sum_{v \in V_i} \left(  \pi_{v}  (H_G (v, w_i)+ H_G (w_i, w_r)) - \npi_{v} (H_{\nG} (v, w_i) + H_{\nG} (w_i, w_r)) \right) \\
&=& \sum_{i=1}^{t} \sum_{v \in V_i}  \left(  \pi_{v}  H_G (v, w_i) - \npi_{v} H_{\nG} (v, w_i)  \right)
\, = \,
\sum_{i=2}^{t-1} \sum_{v \in U_i}  \left(  \pi_{v}  H_G (v, w_i) - \npi_{v} H_{\nG} (v, w_i)  \right)
\,  \geq \, 0
\end{eqnarray*}
because each $ \sum_{v \in V_i}   \pi_{v}  H_G (v, w_i) - \npi_{v} H_{\nG} (v, w_i)  \geq 0$, with equality holding if and only if every $v \in U_i$ is already adjacent to $w_i$. Since $G$ was not a caterpillar, there must be at least one exceptional $v$ in some $U_i$, so  this inequality is strict.
\end{proof}
		

\subsection{Leaf Transplants and Tree Surgeries}

For the remainder of the paper, $G$ is a caterpillar on $n$ vertices with spine  $W= \{ w_1, w_2, \ldots , w_t \}$. 
We choose to view $w_1$ as the leftmost vertex and $w_t$ as the rightmost vertex.
For $v \in V$, we define the \emph{left pessimal hitting time} $\L(v) = H(w_1,v)$ and the \emph{right pessimal hitting time} $\R(v) = H(w_t,v)$. Of course, $\hp(v) = \max \{ \L(v), \R(v) \}$. When $G$ is bifocal, we always label the foci as $w_{r-1}$ and $w_r$ where $w_r$ achieves $H(w_r,\pi)=\Tbest(G)$. In this case, we view $\{ w_1, \ldots , w_{r-2} \}$ as the \emph{left spine} and $\{ w_{r+1}, \ldots , w_{t} \}$ as the \emph{right spine}.  The foci $\{ w_{r-1}, w_r \}$ are considered the \emph{central spine}. Note that this arrangement guarantees that $\p{w_r}=w_1$ and $\p{w_{r-1}}=w_t$. We collect some basic  results about  caterpillar foci in the next lemma.

\begin{lemma}
\label{lemma:focuscat}
Let $G$ be a caterpillar with spine $W= \{ w_1, w_2, \ldots , w_t \}$. Then 
\begin{enumerate}
\item[(a)] The vertex $w_r$ is the unique focus of $G$ if and only if $\L(w_r)=\R(w_r)$.
\item[(b)] The foci of $G$ are $w_{r-1}, w_r$ if and only if  $\L(w_r)>\R(w_r)$ and  $\R(w_{r-1})>\L(w_{r-1}).$
\item[(c)] If $G$ has foci $w_{r-1}, w_r$ then vertex $w_r$   is a best mix focus if and only if $\L(w_{r-1})\leq \R(w_{r}).$
\item[(d)] 
If $L(w_r) > R(w_r)$ and $\L(w_{r-1})\leq \R(w_{r})$ then $w_r$ is a best mix focus and $w_{r-1}$ is also a focus of $G$.
\end{enumerate}
\end{lemma}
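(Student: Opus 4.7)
My plan rests on two structural facts about a caterpillar $G$ with spine $W = \{w_1,\ldots,w_t\}$. First, for every spine vertex $w_i$, the $w_i$-pessimal vertices lie entirely in the left subtree of $G-w_i$ (the one containing $w_1$) when $\L(w_i) > \R(w_i)$, entirely in the right subtree when $\R(w_i) > \L(w_i)$, and meet both subtrees when $\L(w_i) = \R(w_i)$; in all cases $\hp(w_i) = \max\{\L(w_i),\R(w_i)\}$. This holds because iterated use of \eqref{eq:adjhtime} along the spine gives $H(w_j,w_i)$ as a sum of positive edge hitting times, monotone in $j$ on each side of $w_i$ and hence maximized at $w_1$ or $w_t$; and a pendant leaf $u$ attached to an interior $w_j$ satisfies $H(u,w_i)=1+H(w_j,w_i)$, where the extra $+1$ is exactly absorbed by the edge term $H(w_1,w_2)=1$ (because $w_1$ is a leaf of $G$, so $|V_{w_1:w_2}|=1$), with the symmetric statement at the right end. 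Consequently each pendant's hitting time to $w_i$ is dominated by one of $\L(w_i),\R(w_i)$, with pendants adjacent to $w_2$ (resp.\ $w_{t-1}$) tying with $w_1$ (resp.\ $w_t$) and sitting in the same subtree. The second fact is monotonicity: since each edge hitting time is positive, the tree identity $\L(w_{i+1}) = \L(w_i) + H(w_i,w_{i+1})$ makes $\L$ strictly increasing along the spine and $\R$ strictly decreasing, so $\hp$ is strictly V-shaped on $W$; moreover a pendant $u$ adjacent to $w_j$ has $\hp(u) = \hp(w_j) + 2n-3 > \hp(w_j)$, confining primary foci to the spine.

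For (a), $w_r$ is the unique focus precisely when $w_r$ is a primary focus and no neighbor is promoted by Definition \ref{def:focus}; by the subtree statement above, the latter condition is exactly that $w_r$-pessimal vertices appear in both subtrees of $G-w_r$, i.e., $\L(w_r) = \R(w_r)$, and monotonicity makes $w_r$ automatically the unique primary focus in that case. For (b), bifocality with foci $\{w_{r-1},w_r\}$ corresponds exactly to a \emph{strict} sign change of $\L-\R$ between $w_{r-1}$ and $w_r$: under $\L(w_{r-1}) < \R(w_{r-1})$ and $\L(w_r) > \R(w_r)$, the pessimal set of $w_r$ sits in the left subtree (which contains $w_{r-1}$) and that of $w_{r-1}$ in the right subtree (which contains $w_r$), so Definition \ref{def:focus} produces both foci whether or not $\hp(w_{r-1}) = \hp(w_r)$; any non-strict inequality instead forces a unique focus by (a) or pushes the bifocal pair to different indices.

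For (c), in the bifocal regime of (b) I may take $\p{w_r} = w_1$ and $\p{w_{r-1}} = w_t$, and Theorem \ref{thm:tbmvertex} then identifies $w_r$ as a best mix focus iff $H(w_1,w_{r-1}) \le H(w_t,w_r)$, i.e., $\L(w_{r-1}) \le \R(w_r)$. For (d), the hypothesis chain
\[
\L(w_{r-1}) \le \R(w_r) < \R(w_r) + H(w_{r-1},w_r) = \R(w_{r-1})
\]
yields $\R(w_{r-1}) > \L(w_{r-1})$; combined with $\L(w_r) > \R(w_r)$, part (b) places $G$ in the bifocal regime with foci $\{w_{r-1},w_r\}$, and (c) identifies $w_r$ as a best mix focus. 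The main delicacy is the pessimal-vertex location in the first paragraph, particularly the boundary pendants adjacent to $w_2$ and $w_{t-1}$; once that subtree containment is pinned down, the remainder is a routine monotonicity/sign analysis together with a direct invocation of Theorem \ref{thm:tbmvertex}.
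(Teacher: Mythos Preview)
Your proposal is correct and follows essentially the same route as the paper's proof: parts (a) and (b) unwind Definition~\ref{def:focus} using the location of pessimal vertices, part (c) is Theorem~\ref{thm:tbmvertex} specialized to caterpillars, and part (d) is the inequality chain $\L(w_{r-1}) \le \R(w_r) < \R(w_{r-1})$ followed by (b) and (c). The paper's proof is two sentences long and simply asserts these dependencies; your version supplies the structural details (pessimal vertices are endpoints of the spine, $\L$ and $\R$ are strictly monotone along the spine, pendants cannot be primary foci) that justify the paper's ``direct consequence'' claims, so the two arguments coincide once those details are taken as read.
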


\begin{proof}
Parts (a) and (b) are a direct consequence of Definition \ref{def:focus}. Part (c) is restatement of Theorem \ref{thm:tbmvertex} for caterpillars.  For part (d), we observe that 
$\L(w_{r-1})\leq \R(w_r) < \R(w_{r-1})$, so that $w_{r-1}$ and $w_r$ satisfy (b) and (c).
\end{proof}

When $U_i \neq 0$, we define the \emph{leaf transplant} $\tr{i}{j}$ as the relocation of  $x \in U_i$ so that this leaf is now adjacent to $w_j$ where $1 \leq j \leq t$. Usually, we have $2 \leq j \leq t-1$, so that the leaf $x$ becomes an element of $U_j$. Our most common operation will be an \emph{elementary leaf transplant} where $j=i \pm 1$. Occasionally, we will take $j \in \{1, t\}$, so that the leaf transplant actually increases the length of the spine.
Finally, we need one special transplant that reduces the length of the spine by relocating the leaf $w_t$ to become adjacent to the best mix focus $w_r$: we use $\str{t}{r}$ to denote this rare spinal leaf transplant. The more general term \emph{tree surgery} denotes either a single leaf transplant (either $\tr{i}{j}$ or $\str{t}{r}$), or a pair of leaf transplants $\tr{i_1}{j_1} \wedge \tr{i_2}{j_2}$ performed simultaneously. We use $\S(G)$ to denote the caterpillar that results from applying tree surgery $\S$ to caterpillar $G$. Our algorithm uses eleven different tree surgeries. They are enumerated in Tables \ref{table:phase1}, \ref{table:phase2} and \ref{table:phase3} below. 

The remainder of this section is devoted to proving some fundamental results about the effects of tree surgeries on the best mixing time. 
Let $G=(V,E)$ be a caterpillar and let  $\nG=\S(G) = (V, \nE)$ be the caterpillar obtained by  applying  tree surgery $\S$, with
stationary distribution $\npi = \pi_{\nG}$.  We introduce notation for the comparison of  various caterpillar measurements.
Just as with the pessimal notation introduced in Section \ref{sec:pessimal},  we are motivated  to shield ourselves from the particular locations of important vertices (like foci and pessimal vertices) which may be different in $G$ and $\nG$.

As with  equation \eqref{eqn:pessdiff}, we introduce $\Delta$-notation to compactly represent the changes in various quantities due to a tree surgery. For vertices $v, v_1, v_2 \in V$, we define:
$$
\begin{array}{ccc}
\begin{array}{rcl}
\Delta \deg(v) &=&  \deg_{\nG}(v)  - \deg_G(v), \\
 \Delta \pi(v) &=&   \npi(v) - \pi(v) , \\
\Delta H(v_1, v_2) &=&  H_{\nG}(v_1, v_2) - H_G(v_1, v_2) ,  \\
\Delta H(\pi, v) &=& H_{\nG}(\npi, v) - H_G(\pi, v),  \\
\end{array}
&\quad&
\begin{array}{rcl}
\Delta \hp(v) &=& \hp_{\nG}(v) - \hp_G(v), \\
\Delta \L(v) &=& \L_{\nG}(v) - \L_G(v), \\
\Delta \R(v) &=& \R_{\nG}(v) - \R_G(v), \\
\Delta \Tbest &=& \Tbest(\nG) - \Tbest(G). \\
\end{array}
\end{array}
$$

Let $G$ be a caterpillar and let $\nG=\S(G)$ be the result of a leaf transplant $\tr{i}{j}$. We give formulas for  $\Delta H(v,w_r)$ for use  in later sections. We only consider  leaf transplants $\tr{i}{j}$ where either $1 \leq i,j \leq r$ or where $r+1 \leq i,j \leq t$ (so that we never transplant leaves from the left spine to the right spine, or vice versa).
The formulas below cover two qualitatively different cases for $\tr{i}{j}$. When $2 \leq j \leq t-1$, the spine length is unaffected.  However,  when $j \in \{ 1, t \}$,  the spine length increases. The formulas below still hold, though the arguments are slightly different.
All four formulas below follow quickly from equation \eqref{eq:adjhtime}, and we leave these short calculations to the reader. Figure \ref{fig:transplants} shows the effect of two example transplants on hitting times. The arguments for $j=i \pm 1$ are straight forward, and the other cases follow inductively.

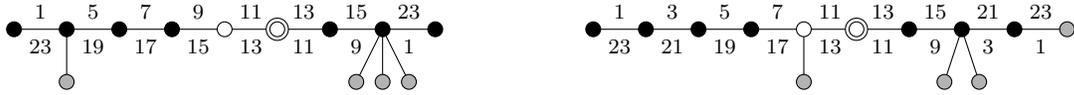
\begin{figure}[t]

\begin{center}
\begin{tikzpicture}[scale=.7, every node/.style={font=\scriptsize}]

\begin{scope}

\draw (1,0) -- (9,0);

\draw (2,0) -- (2,-1);

\draw (8,0) -- (8,-1);
\draw (8,0) -- (8.5,-1);
\draw (8,0) -- (7.5,-1);

\foreach \i in {1,2,3,4,7,8,9}
{
\draw[fill] (\i,0) circle (4pt);
}

\draw[fill=black!30] (2,-1) circle (4pt);
\draw[fill=black!30] (7.5,-1) circle (4pt);
\draw[fill=black!30] (8,-1) circle (4pt);
\draw[fill=black!30] (8.5,-1) circle (4pt);

\draw[fill=white] (5,0) circle (4pt);

\draw[fill=white] (6,0) circle (6pt);
\draw[fill=white] (6,0) circle (4pt);

\node[above] at (1.5,0) {1};
\node[above] at (2.5,0) {5};
\node[above] at (3.5,0) {7};
\node[above] at (4.5,0) {9};
\node[above] at (5.5,0) {11};
\node[above] at (6.5,0) {13};
\node[above] at (7.5,0) {15};
\node[above] at (8.5,0) {23};

\node[below] at (1.5,0) {23};
\node[below] at (2.5,0) {19};
\node[below] at (3.5,0) {17};
\node[below] at (4.5,0) {15};
\node[below] at (5.5,0) {13};
\node[below] at (6.5,0) {11};
\node[below] at (7.5,0) {9};
\node[below] at (8.5,0) {1};

\end{scope}

\begin{scope}[shift={(11,0)}]

\draw (1,0) -- (10,0);

\draw (5,0) -- (5,-1);

\draw (8,0) -- (8.33,-1);
\draw (8,0) -- (7.67,-1);

\foreach \i in {1,2,3,4,7,8,9}
{
\draw[fill] (\i,0) circle (4pt);
}

\draw[fill=black!30] (5,-1) circle (4pt);
\draw[fill=black!30] (7.67,-1) circle (4pt);
\draw[fill=black!30] (8.33,-1) circle (4pt);
\draw[fill=black!30] (10,0) circle (4pt);

\draw[fill=white] (5,0) circle (4pt);

\draw[fill=white] (6,0) circle (6pt);
\draw[fill=white] (6,0) circle (4pt);

\node[above] at (1.5,0) {1};
\node[above] at (2.5,0) {3};
\node[above] at (3.5,0) {5};
\node[above] at (4.5,0) {7};
\node[above] at (5.5,0) {11};
\node[above] at (6.5,0) {13};
\node[above] at (7.5,0) {15};
\node[above] at (8.5,0) {21};
\node[above] at (9.5,0) {23};

\node[below] at (1.5,0) {23};
\node[below] at (2.5,0) {21};
\node[below] at (3.5,0) {19};
\node[below] at (4.5,0) {17};
\node[below] at (5.5,0) {13};
\node[below] at (6.5,0) {11};
\node[below] at (7.5,0) {9};
\node[below] at (8.5,0) {3};
\node[below] at (9.5,0) {1};

\end{scope}

\end{tikzpicture}

\end{center}

\caption{The effect of transplants on hitting times. The right tree results from applying $\tr{2}{5} \wedge \tr{8}{9}$ on the left tree. Each spine edge $(w_i, w_{i+1})$ is labeled by $H(w_i,w_{i+1})$ above and $H(w_{i+1},w_i)$ below. Other hitting times are linear combinations of those shown.}

\label{fig:transplants}

\end{figure}

Suppose that the leaf transplant $\tr{i}{j}$ moves $x$ from  $U_i$ to $U_j$. For $v \in V_k$,  the value of $\Delta H (v,w_r)$ depends on the relative locations of $i,j$ and $k$:
\begin{align}
\label{eq:leftleft2}
\mbox{If } 1\leq j<i\leq r \mbox{ then } &
		\Delta H(v,w_r)=
		\left\{
			\begin{array}{cl}
				H_G(w_j,w_i)+2(i-j) & \quad v=x,\\
				2(i-j) & \quad k\leq j,\\
				2(i-k) & \quad j<k<i,\\
				0 & \quad k \leq i \leq r \mbox{ and } v \neq x.
			\end{array}
		\right.
\\
\label{eq:leftright2}
\mbox{If }   1\leq i<j\leq r  \mbox{ then } &
		\Delta H(v,w_r)=
		\left\{
			\begin{array}{cl}
				-H_G(w_i,w_j) & \quad v=x,\\
				-2(j-i) & \quad k\leq i \mbox{ and } v \neq x,\\
				-2(j-k) & \quad i<k<j,\\
				0 & \quad j \leq k \leq r.
			\end{array}		
		\right.
\\
\label{eq:rightleft2}
\mbox{If } r \leq j<i\leq t  \mbox{ then } &
		\Delta H(v,w_r)=
		\left\{
			\begin{array}{cl}
				-H_G(w_i,w_j) & \quad v=x,\\
				-2(i-j) & \quad i \leq k \mbox{ and } v \neq x,\\
				-2(i-k) & \quad j < k < i,\\
				0 & \quad r < k \leq j.
			\end{array}
		\right.
\\
 \label{eq:rightright2}
 \mbox{If } r \leq  i<j\leq t  \mbox{ then } &
 		\Delta H(v,w_r)=
		\left\{
			\begin{array}{cl}
				H_G(w_j,w_i)+2 & \quad v=x,\\
				2(j-i) & \quad j \leq k,\\
				2(j-k) & \quad i < k < j,\\
				0 & \quad r < k \leq i \mbox{ and } v \neq x.
			\end{array}
		\right.
\end{align}
We get similar equations for hitting times to the second focus $w_{r-1}$, with the appropriate change to the bounds on the indices $i,j$.  This completes our list of useful hitting time changes due to common leaf transplants.

Next, we turn our attention to tracking the change in the best mixing time after a tree surgery. The lemmas that follow will be used to analyze all eleven surgeries used in our algorithm. 
If $w_r$ is the best mix focus of $G$ and $w_s$ is the best mix focus of $\nG$, then 
\begin{equation}
\label{eq:tbmchange}
\Delta \tbm = \hp_{\nG}(w_s) - \hp_G(w_r) - \big(  H_{\nG}(\npi, w_s) - H_G(\pi, w_r)   \big).
\end{equation}
As we alter our caterpillar, we strive to maintain $w_r$ as the best mix focus and the leftmost spinal vertex as the $w_r$-pessimal vertex. In this case,  equation \eqref{eq:tbmchange} simplifies to
$
\Delta \tbm  = \Delta \hp(w_r) - \Delta H(\pi, w_r) =  \Delta L(w_r) - \Delta H(\pi, w_r).
$
This can be rephrased as the simple criterion:
\begin{equation}
	 \label{eq:tbmchange2}
		\Delta \tbm  \geq 0 \quad \Longleftrightarrow \quad \pdelt(w_r) \geq \Delta H(\pi,w_r).
\end{equation}

The next two results track the effect of a surgery on the location of the foci.
	\begin{lemma} 
	\label{lem:criteria}
		Let $G$ be a caterpillar. Let $\S$ be a tree surgery and let $\nG=\S(G)$. If 
\begin{eqnarray}
 \label{eq:lchange}
L_G(w_r)-R_G(w_r) &>& \Delta\R(w_r)-\Delta\L(w_r) \\
\mbox{and} \quad
\label{eq:newtbmfoc}
R_G(w_r)-L_G(w_{r-1}) &\geq& \Delta\L(w_{r-1})-\Delta\R(w_r),
\end{eqnarray}
%
		then $\nG=\S(G)$  has best mix focus $w_{r}$ and $w_{r-1}$ is also a focus. 
	\end{lemma}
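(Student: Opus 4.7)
The plan is to derive the conclusion as a direct application of Lemma~\ref{lemma:focuscat}(d) to $\nG$ rather than to $G$: that part guarantees that if a caterpillar satisfies $\L(w_r) > \R(w_r)$ and $\L(w_{r-1}) \leq \R(w_r)$, then $w_r$ is a best mix focus and $w_{r-1}$ is also a focus. So the task reduces to showing that hypotheses \eqref{eq:lchange} and \eqref{eq:newtbmfoc} are precisely these two inequalities holding in $\nG$.

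For the first, I would expand using the definitions of $\Delta \L$ and $\Delta \R$:
\[
\L_{\nG}(w_r) - \R_{\nG}(w_r) \;=\; \bigl(\L_G(w_r) - \R_G(w_r)\bigr) - \bigl(\Delta \R(w_r) - \Delta \L(w_r)\bigr).
\]
Hypothesis \eqref{eq:lchange} is exactly the statement that this difference is strictly positive, giving $\L_{\nG}(w_r) > \R_{\nG}(w_r)$. Similarly,
\[
\R_{\nG}(w_r) - \L_{\nG}(w_{r-1}) \;=\; \bigl(\R_G(w_r) - \L_G(w_{r-1})\bigr) - \bigl(\Delta \L(w_{r-1}) - \Delta \R(w_r)\bigr),
\]
and hypothesis \eqref{eq:newtbmfoc} is precisely the statement that this quantity is nonnegative, giving $\L_{\nG}(w_{r-1}) \leq \R_{\nG}(w_r)$. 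Invoking Lemma~\ref{lemma:focuscat}(d) on $\nG$ then yields both that $w_r$ is a best mix focus of $\nG$ and that $w_{r-1}$ is also a focus.

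The only subtlety I anticipate is verifying that $\L_{\nG}$ and $\R_{\nG}$ in the above identities really do correspond to hitting times from the leftmost and rightmost spinal vertices of $\nG$, so that Lemma~\ref{lemma:focuscat}(d) is legitimately applicable to $\nG$. For the leaf transplants $\tau(i,j)$ with $2 \leq j \leq t-1$ the spine is unchanged and this is immediate; for transplants that lengthen the spine at an end or for the spinal transplant $\sigma(t,r)$ that shortens it, the $\Delta$-notation is designed to accommodate the shift in endpoints, and the algebraic manipulation above still delivers the two required inequalities in $\nG$ relative to its updated leftmost and rightmost vertices. I do not expect this bookkeeping to pose a real obstacle; the entire argument is essentially a translation of the hypotheses through $\Delta$-notation into the hypotheses of Lemma~\ref{lemma:focuscat}(d).
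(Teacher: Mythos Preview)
Your proposal is correct and follows essentially the same approach as the paper: the paper's proof is the single sentence ``These criteria are equivalent to the conditions of Lemma~\ref{lemma:focuscat}(d) for $\nG$,'' and you have simply spelled out that equivalence via the $\Delta$-notation. Your remark about the bookkeeping for spine-altering transplants is more careful than the paper itself, which leaves that implicit in the definitions.
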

	
\begin{proof}
These criteria are equivalent to the conditions of  Lemma \ref{lemma:focuscat} (d) for $\nG$.  
\end{proof}	
	
\begin{corollary} \label{cor:criteria}
Let $G$ be a bifocal caterpillar with focus $w_{r-1}$ and best mix focus $w_r$. Let $\S$ be a tree surgery and let $\nG=\S(G)$. If 
$\Delta\R(w_r)-\Delta\L(w_r) \leq 0$
and
$\Delta\L(w_{r-1})-\Delta\R(w_r)\leq 0$
%
then $\nG=\S(G)$ also has focus $w_{r-1}$ and best mix focus $w_r$. 
\end{corollary}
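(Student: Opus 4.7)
The plan is to derive the corollary as a direct consequence of Lemma \ref{lem:criteria} by verifying that its two hypotheses follow automatically from (i) the structural assumption that $G$ is bifocal with foci $w_{r-1}, w_r$ and best mix focus $w_r$, combined with (ii) the two inequalities on the $\Delta$-quantities given in the corollary statement. There is no calculation to perform on the tree itself; the proof is purely a logical chain through the characterizations in Lemma \ref{lemma:focuscat}.

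First I would unpack the hypothesis using Lemma \ref{lemma:focuscat}. Since $G$ is bifocal with foci $w_{r-1}$ and $w_r$, part (b) of that lemma gives $\L_G(w_r) > \R_G(w_r)$, and hence
\[
\L_G(w_r) - \R_G(w_r) > 0.
\]
Since $w_r$ is the best mix focus of $G$, part (c) gives $\L_G(w_{r-1}) \leq \R_G(w_r)$, and hence
\[
\R_G(w_r) - \L_G(w_{r-1}) \geq 0.
\]
These are the two ``baseline'' strict and weak inequalities that separate $G$ from the boundary cases of Lemma \ref{lemma:focuscat}.

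Next, I would combine each baseline with the corresponding hypothesis of the corollary. From $\Delta\R(w_r) - \Delta\L(w_r) \leq 0$ and the strict baseline above,
\[
\L_G(w_r) - \R_G(w_r) \;>\; 0 \;\geq\; \Delta\R(w_r) - \Delta\L(w_r),
\]
which is precisely hypothesis \eqref{eq:lchange} of Lemma \ref{lem:criteria}. Likewise, from $\Delta\L(w_{r-1}) - \Delta\R(w_r) \leq 0$ and the weak baseline,
\[
\R_G(w_r) - \L_G(w_{r-1}) \;\geq\; 0 \;\geq\; \Delta\L(w_{r-1}) - \Delta\R(w_r),
\]
which is precisely hypothesis \eqref{eq:newtbmfoc}. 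Applying Lemma \ref{lem:criteria} now yields that $\nG = \S(G)$ has best mix focus $w_r$ with $w_{r-1}$ also a focus, as claimed.

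I do not expect any obstacle here; the argument is essentially just matching signs. The only subtlety worth flagging is that the strictness in \eqref{eq:lchange} is genuinely needed, and it is supplied by the strict inequality $\L_G(w_r) > \R_G(w_r)$ from Lemma \ref{lemma:focuscat}(b), which is why the bifocal assumption (and not merely the weaker property that $w_r$ is a focus) is essential in the corollary.
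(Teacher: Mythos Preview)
Your proof is correct and follows essentially the same route as the paper: both derive the corollary directly from Lemma~\ref{lem:criteria} by noting that the bifocal and best-mix-focus hypotheses on $G$ (via Lemma~\ref{lemma:focuscat}) supply $L_G(w_r)-R_G(w_r)>0$ and $R_G(w_r)-L_G(w_{r-1})\geq 0$, which combine with the $\Delta$-inequalities to yield \eqref{eq:lchange} and \eqref{eq:newtbmfoc}. The paper states $L(w_r)-R(w_r)\geq 1$ (using integrality of hitting times on trees) rather than your $>0$, but this is immaterial since \eqref{eq:lchange} only requires strict positivity.
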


\begin{proof}
This follows directly from Lemma \ref{lem:criteria} and the inequalities   $L(w_r)-R(w_r)\geq 1$ and $R(w_r)-L(w_{r-1})\geq 0$.
\end{proof}

Lemma \ref{lem:criteria} and Corollary \ref{cor:criteria} are key tools of our methodology. We use them repeatedly to verify the foci and best mix focus of our caterpillar after a surgery has been applied. Once we know that $w_r$ is still the best mix focus, we then verify that equation \eqref{eq:tbmchange2} holds. 
Next, we give a final test for $\Delta \Tbest \geq 0$. This lemma will be used very frequently in subsequent sections.

\begin{lemma} \label{lem:slack}
Let $G$ be a caterpillar and let $\S$ be a tree surgery such that $w_r$ is a best mix focus of $G$ and $\nG$.
Let $A=\{v\in V\mid \Delta H(v,w_r)>\pdelt(w_r)\}$ and let $C = \{ v \in \overline{A} \mid \Delta(v) > 0 \}$.  If there exists $C \subset B \subset \overline{A}$  for which 
			\begin{align}
		\label{eqn:slack}	
				&\sum_{v\in A}{ \Big(\deg_{\nG}(v)(\Delta H(v,w_r)-\pdelt(w_r))+\Delta\deg_G(v)H_G(v,w_r)  \Big)} \nonumber \\
					\leq&\sum_{u\in B}{ \Big( \deg_{\nG}(u)(\pdelt(w_r)-\Delta H(u,w_r))-\Delta\deg_G(u)H_G(u,w_r) \Big)}
			\end{align}
		then $\Delta \tbm \geq 0$.
\end{lemma}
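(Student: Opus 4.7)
The plan is to recast the criterion $\Delta\Tbest\geq 0$ given by \eqref{eq:tbmchange2}, namely $\pdelt(w_r)\geq\Delta H(\pi,w_r)$, as a single sum over $V$ in which the $A$- and $B$-contributions align exactly with the two sides of the hypothesized inequality \eqref{eqn:slack}.

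First I would expand the right-hand side. Because a tree surgery preserves both $|V|=n$ and $|E|=n-1$, we have $\pi(v)=\deg_G(v)/2(n-1)$ and $\npi(v)=\deg_{\nG}(v)/2(n-1)$, so a direct manipulation gives
\[
2(n-1)\,\Delta H(\pi,w_r) \;=\; \sum_{v\in V}\bigl(\deg_{\nG}(v)\,\Delta H(v,w_r) + \Delta\deg_G(v)\,H_G(v,w_r)\bigr).
\]
Using $\sum_{v\in V}\deg_{\nG}(v)=2(n-1)$ to write $2(n-1)\pdelt(w_r)=\sum_v\deg_{\nG}(v)\pdelt(w_r)$ and combining, the criterion $\pdelt(w_r)\geq\Delta H(\pi,w_r)$ is equivalent to
\[
\sum_{v\in V}\Bigl(\deg_{\nG}(v)\bigl(\pdelt(w_r)-\Delta H(v,w_r)\bigr) \;-\; \Delta\deg_G(v)\,H_G(v,w_r)\Bigr)\;\geq\; 0.
\]

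Next I would partition $V=A\sqcup B\sqcup(\overline{A}\setminus B)$ and examine the three sub-sums. For $v\in A$, the summand is the negative of the corresponding term in the $A$-sum of \eqref{eqn:slack}, since $\pdelt(w_r)-\Delta H(v,w_r)<0$ throughout $A$. For $u\in B$, the summand is precisely the corresponding term in the $B$-sum of \eqref{eqn:slack}. Moving the $A$ sub-sum to the other side and invoking \eqref{eqn:slack}, it only remains to verify that the sub-sum over $\overline{A}\setminus B$ is non-negative. But for $u\in\overline{A}\setminus B$, the condition $u\notin A$ forces $\pdelt(w_r)-\Delta H(u,w_r)\geq 0$, while $u\notin C$ (which follows from $C\subset B$) forces $\Delta\deg_G(u)\leq 0$; together with $H_G(u,w_r)\geq 0$, every summand in this last sub-sum is non-negative.

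The argument is purely algebraic bookkeeping, so the only real subtlety is recognizing the role of $C$: it is exactly the collection of $\overline{A}$-vertices whose summand is not automatically non-negative (because their degree strictly increased under the surgery). Demanding $C\subset B$ guarantees that no such potentially negative contribution is silently dropped when one passes from the full complement $\overline{A}$ to the bookkeeping set $B$ in \eqref{eqn:slack}.
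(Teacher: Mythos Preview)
Your proof is correct and follows essentially the same approach as the paper: both expand $\pdelt(w_r)-\Delta H(\pi,w_r)$ as a sum over $V$ of the quantity $g(v)=\deg_{\nG}(v)(\pdelt(w_r)-\Delta H(v,w_r))-\Delta\deg_G(v)H_G(v,w_r)$, split into $A$, $B$, and $\overline{A}\setminus B$, and observe that $g(u)\geq 0$ on $\overline{A}\setminus B$ because $C\subset B$. Your version is slightly more explicit in noting that tree surgeries preserve $|E|=n-1$, which justifies the common denominator $2(n-1)$ in $\pi$ and $\npi$.
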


Before proving this lemma, we make a few comments its use in later sections.
First, the set $A$ will always be small: it will only contain one or both of the leaves moved during the surgery.
Second, if the lemma holds for $B$, then it holds for any superset of $B$, including $\overline{A}$. However, there is no need to calculate the right hand side for $\overline{A}$ when a small subset will do.
For our earlier surgeries, the set $B$ will be small, typically consisting of a handful of spine vertices near the transplant locations. In later surgeries, $B$ will consist of half or  all of the spine. 

\begin{proof} We  decompose $\Delta H(\pi,w_r)$ as follows:
	\begin{align}
		\Delta H(\pi,w_r)	&=\sum_{v\in V}{\big(\npi(v)H_{\nG}(v,w_r)-\pi(v)H_G(v,w_r)\big)}\nonumber \\
		&=\sum_{v\in V}\big(\npi(v)(\Delta H(v,w_r) + H_G(v,w_r))-\pi(v)H_G(v,w_r)\big)\nonumber \\
		&=\sum_{v\in V}{\big(\npi(v)\Delta H(v,w_r)+\Delta\pi(v)H_G(v,w_r)\big)}. \nonumber
	\end{align}
Let $g(v) =  \deg_{\nG}(v)(\pdelt(w_r) -  \Delta H(v,w_r)) - \Delta\deg_G(v)H_G(v,w_r)$. We can rewrite
	\begin{align*} 
 \pdelt(w_r)-\Delta H(\pi,w_r)
		& =  \pdelt(w_r)-\sum_{v\in V}{\left(\npi(v)\Delta H(v,w_r)+\Delta\pi(v)H_G(v,w_r)\right)}\\
		&= \frac{1}{2|E|} \left( \sum_{v\in A} g(v) + \sum_{u\in \overline{A}} g(u) \right) \,
		 \geq \,\frac{1}{2|E|} \left(   \sum_{v\in A} g(v) + \sum_{u\in B} g(u) \right). 
	\end{align*}
The final inequality holds because $g(u) \geq 0$ for every $u \in \overline{A} \backslash B \subset \overline{A} \backslash C$. 
Therefore, if $ - \sum_{v\in A} g(v) \leq  \sum_{u\in B} g(u)$ then $ \pdelt(w_r)-\Delta H(\pi,w_r) \geq 0$.
\end{proof}

We have an immediate corollary  for the special case where $A$ is empty.
	\begin{corollary} \label{cor:slack}
		Given a tree $G$ and any tree surgery $\S$ such that $w_r$ is the best mix focus of $G$ and $\nG$, if 
$
				\Delta H(v,w_r)\leq\pdelt(w_r),
$
		for all $v\in V$, then $\Delta \tbm \geq 0$.
		\qed
	\end{corollary}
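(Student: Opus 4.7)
The plan is to obtain the corollary as the immediate specialization of Lemma~\ref{lem:slack} to the case where the ``exceptional'' set $A$ is empty. The corollary's hypothesis, $\Delta H(v, w_r) \leq \pdelt(w_r)$ for every $v \in V$, is precisely the statement that $A = \{ v \in V : \Delta H(v, w_r) > \pdelt(w_r) \} = \emptyset$. With $A$ empty, the left-hand sum of the slack inequality~\eqref{eqn:slack} is a vacuous sum equal to zero. Taking $B = \emptyset$ (which is admissible when the inclusion $C \subset B \subset \overline{A}$ collapses to the trivial case; otherwise one takes any convenient $B \supset C$ and observes that the corresponding sum of $g(u)$-values is non-negative) makes the right-hand side of~\eqref{eqn:slack} zero as well, reducing it to the tautology $0 \leq 0$.

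Applying Lemma~\ref{lem:slack} with these choices of $A$ and $B$ then immediately yields $\Delta \tbm \geq 0$, completing the proof. Conceptually, the corollary records the following simplification of the lemma: whenever no vertex is displaced past the pessimal shift by the surgery, the identity $\pdelt(w_r) - \Delta H(\pi, w_r) = \frac{1}{2|E|} \sum_v g(v)$ established in the proof of Lemma~\ref{lem:slack} already has a non-negative right-hand side, and in particular each term of the form $\deg_{\nG}(v)(\pdelt(w_r) - \Delta H(v, w_r))$ contributed by the corollary's hypothesis dominates the degree corrections that could lower $g(v)$.

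I do not anticipate any substantive obstacle, since the corollary is a one-line convenience designed to be invoked repeatedly in the surgery analysis of the subsequent sections. The only minor bookkeeping is verifying that the set $C$ of degree-gaining vertices does not obstruct the choice $B = \emptyset$ in the cases of interest; in those cases one falls back on the identity above and on the fact that each $g(u) \geq 0$ for $u \in \overline{A} \setminus C$, precisely the mechanism already present in the proof of Lemma~\ref{lem:slack}.
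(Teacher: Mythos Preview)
Your proposal is correct and matches the paper's approach exactly: the paper presents the corollary as the immediate $A=\emptyset$ specialization of Lemma~\ref{lem:slack} and offers no argument beyond the \qed. You actually go a step further than the paper by flagging the constraint $C\subset B$; your parenthetical handling of that point is admittedly hand-wavy (asserting that ``the corresponding sum of $g(u)$-values is non-negative'' for some $B\supset C$ is exactly what would need checking), but the paper does not address it at all, so your write-up is at least as complete as the original.
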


Finally, we note that  if the $w_r$-pessimal vertices in $G$ and $\nG$ are the leftmost spinal vertices in these graphs,  then we can replace $\pdelt(w_r)$  with $\Delta\L(w_r)$ in Lemma \ref{lem:slack} and Corollary \ref{cor:slack}.

\section{Proof of the upper bound in Theorem \ref{thm:bestmix}}
 \label{sec:proof}

In Section \ref{sec:bestmix-extreme}, we proved the lower bound of Theorem  \ref{thm:bestmix}, and calculated the best mix time for the even path and the odd wishbone. In this section, we prove  the upper bound of Theorem \ref{thm:bestmix}, leaving the details to the lemmas in the accompanying subsections.
Two examples of the algorithm in practice are shown in Figure \ref{fig:examples} above.

\begin{proofof}{Theorem \ref{thm:bestmix}}
Given a tree $G$ that is not an even path or an odd wishbone, we must show that it does not maximize the best mixing time. We may assume that $G$ is not a star since that is clearly not the maximizing structure.
Starting from our tree $G$, we let $G_0$ be the caterpillar constructed by Lemma \ref{lem:cat}, so that $\Tbest(G) \leq \Tbest(G_0)$. Next, we apply a sequence of tree surgeries to produce  a sequence of caterpillars $G_0,G_1, G_2, \ldots, G_m$ such that $\Tbest(G_{i-1}) \leq \Tbest(G_i)$,  where the final caterpillar $G_m$ is either $P_n$ or $Y_n$. This occurs in an algorithmic manner, divided into three phases.

\begin{figure}[t]

\begin{center}
\begin{tikzpicture}[scale=.66]

\node at (0,-2.5) {(c)};

\draw (-1.5,1) -- (0,0)  -- (2.25, 1.5);
\draw (0,0) -- (0,-1);

\draw[fill] (-1.5,1) circle (3pt);
\draw[fill] (-.75,.5) circle (3pt);
\draw[fill] (0,0) circle (3pt);
\draw[fill] (.75,.5) circle (3pt);
\draw[fill] (1.5,1) circle (3pt);
\draw[fill] (2.25,1.5) circle (3pt);
\draw[fill] (0,-1) circle (3pt);

\begin{scope}[shift={(-5,-.5)}]

\node at (.75,-2) {(b)};

\draw (0,3) -- (0,0) -- (1.5,0) -- (1.5,2);

\draw (0,-1) -- (0,0);
\draw (.33,-1) -- (0,0);
\draw (-.33,-1) -- (0,0);

\draw (1.5,-1) -- (1.5,0);
\draw (1.83,-1) -- (1.5,0);
\draw (2.16,-1) -- (1.5,0);
\draw (1.17,-1) -- (1.5,0);

\draw[fill] (0,3) circle (3pt);
\draw[fill] (0,2) circle (3pt);
\draw[fill] (0,1) circle (3pt);
\draw[fill] (0,0) circle (3pt);
\draw[fill] (1.5,0) circle (3pt);
\draw[fill] (1.5,1) circle (3pt);
\draw[fill] (1.5,2) circle (3pt);
.3

\draw[fill]  (0,-1) circle (3pt);
\draw[fill]  (.33,-1) circle (3pt);
\draw[fill]  (-.33,-1) circle (3pt);

\draw[fill]  (1.5,-1) circle (3pt);
\draw[fill]  (1.83,-1) circle (3pt);
\draw[fill]  (2.16,-1) circle (3pt);
\draw[fill]  (1.17,-1) circle (3pt);

\end{scope}

\begin{scope}[shift={(-12,0)}]

\node at (.5,-2.5) {(a)};

\draw (-3,0) -- (0,0) -- (1,0) -- (5,0);

\draw (0,-1) -- (0,0);
\draw (.33,-1) -- (0,0);
\draw (-.33,-1) -- (0,0);

\draw (1.2,-1) -- (1,0);
\draw (.8,-1) -- (1,0);

\draw (3,1) -- (3,0);
\draw (-2,1) -- (-2,0);

\draw[fill] (-3,0) circle (3pt);
\draw[fill] (-2,0) circle (3pt);
\draw[fill] (-1,0) circle (3pt);
\draw[fill] (0,0) circle (3pt);
\draw[fill] (1,0) circle (3pt);
\draw[fill] (2,0) circle (3pt);
\draw[fill] (3,0) circle (3pt);
\draw[fill] (4,0) circle (3pt);
\draw[fill] (5,0) circle (3pt);

\draw[fill] (-2,1) circle (3pt);
\draw[fill] (3,1) circle (3pt);

\draw[fill]  (0,-1) circle (3pt);
\draw[fill]  (.33,-1) circle (3pt);
\draw[fill]  (-.33,-1) circle (3pt);

\draw[fill]  (1.2,-1) circle (3pt);
\draw[fill]  (.8,-1) circle (3pt);

\end{scope}

\end{tikzpicture}
\end{center}

\caption{Examples of the  milestone graphs obtained at the end of each phase of our process: (a) a seesaw,  (b) a twin broom, and (c) the  wishbone $Y_7$.}
\label{fig:menagerie}

\end{figure}
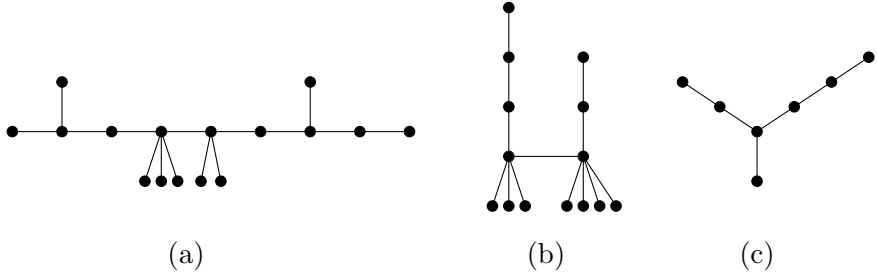

In order to define these phases, we define two special subfamilies of caterpillars which  mark milestones in our graph sequence. Examples of these graphs and a wishbone are shown in Figure \ref{fig:menagerie}.
A \emph{twin broom} is a bifocal caterpillar whose only non-spine leaves are adjacent to the foci $w_{r-1}, w_r$. In other words, $V= W \cup U_{r-1} \cup U_r$. A \emph{seesaw} is a twin broom that has at most one additional leaf on each side of the spine.  In other words, a seesaw satisfies $| \cup_{i=2}^{r-2} U_i| \leq 1$ and $| \cup_{i=r+1}^{t-1} U_i| \leq 1$.
Figure \ref{fig:examples} includes examples of each. In that figure, the output of $S_4$ and the initial tree on 11 vertices are both seesaws. Meanwhile, the outputs of $S_6$ and $S_8$ are both twin brooms (as are all graphs that follow them).

We start with a caterpillar $G$ on $n$ vertices. In Phase One, we convert the caterpillar into a seesaw. Phase Two converts the seesaw into a twin broom. Phase Three converts the twin broom into one of  $P_{n}$ (when $n$ is even) or $Y_n$ (when $n$ is odd).  Lemmas \ref{lemma:cat-to-seesaw}, \ref{lemma:seesaw-to-twinbroom}, and \ref{lemma:twinbroom-to-end}  below show that the best mixing time monotonically increases in every phase of this process. This proves Theorem \ref{thm:bestmix}.
\end{proofof}

Our caterpillar transformation consists of incremental steps that move one or two leaves at a time. This  allows us to monitor the delicate balance maintained by the best mix focus.  
In particular,  there may be a critical step at which we change the focus that attains the best mixing time. This happens in one of two ways. Usually, we make a small change that keeps the current best mix focus, but also causes a neighbor  to also become a best mix focus. The other change is more abrupt: Surgery $S_5$ below changes the location of the unique best mix focus to a neighbor of the current one. In $S_5$, the prescribed caterpillar structure during that surgery makes this crucial transition manageable. 

The remainder of this paper is devoted to proving Lemmas \ref{lemma:cat-to-seesaw}, \ref{lemma:seesaw-to-twinbroom}, and \ref{lemma:twinbroom-to-end}.

\subsection{Phase One: Caterpillar to Seesaw}

In this subsection, we prove that Phase One is successful: we can transform any caterpillar into a seesaw while also increasing the best mixing time. Let $G$ be a caterpillar with spine $W=\{w_1, w_2, \ldots, w_t \}$ where $w_r$ is the best mix focus and $\p{w_r}=w_1$.

\begin{lemma} \label{lemma:cat-to-seesaw}
Let $G$ be a caterpillar $G$ on $n$ vertices. Phase One creates  a seesaw $\nG$ such that $\tbm(\nG)\geq\tbm(G)$.
\end{lemma}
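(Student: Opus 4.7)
The plan is to design an algorithm that iteratively applies Phase One tree surgeries to $G$, with each application weakly increasing $\tbm$ and strictly reducing a simple integer potential, thereby terminating at a seesaw after finitely many steps.

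First, I set up the progress measure. Let $G$ be a caterpillar with spine $W = \{w_1, \ldots, w_t\}$, best mix focus $w_r$, and $\p{w_r} = w_1$. Define the \emph{left surplus} $\lambda(G) = |\bigcup_{i=2}^{r-2} U_i|$ and the \emph{right surplus} $\rho(G) = |\bigcup_{i=r+1}^{t-1} U_i|$. By definition $G$ is a seesaw precisely when $\max\{\lambda(G), \rho(G)\} \leq 1$. So long as $G$ is not a seesaw, $\lambda(G) \geq 2$ or $\rho(G) \geq 2$, exposing a specific local configuration of surplus non-focal leaves on one side of the central pair $\{w_{r-1}, w_r\}$ (e.g.\ two surplus leaves at a common spinal vertex, at distinct spinal vertices, or a surplus leaf paired with other structural features of the side).

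For each such configuration I would invoke a matching Phase One surgery $\S$ and verify two things about $\nG = \S(G)$. First, that $w_r$ remains the best mix focus: I check this via Corollary~\ref{cor:criteria}, using equations~\eqref{eq:leftleft2}--\eqref{eq:rightright2} to compute $\Delta \L(w_r)$, $\Delta \R(w_r)$, and $\Delta \L(w_{r-1})$, and observing that the localized nature of Phase One leaf transplants keeps these changes small enough to preserve the chain of inequalities $\L(w_r) > \R(w_r)$ and $\R(w_r) \geq \L(w_{r-1})$. Second, that $\Delta \tbm \geq 0$: I verify this via Lemma~\ref{lem:slack}, taking $A$ to be the one or two transplanted leaves and $B$ a small set of spine vertices flanking the transplant sites. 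Typically the savings $\deg_{\nG}(u)(\pdelt(w_r) - \Delta H(u, w_r)) - \Delta\deg_G(u) H_G(u, w_r)$ over $u \in B$ telescope into a bound that dominates the contribution $\deg_{\nG}(x)(\Delta H(x,w_r) - \pdelt(w_r)) + \Delta\deg_G(x) H_G(x, w_r)$ from a moved leaf $x \in A$, because $x$ starts out far from $w_r$ and ends up closer, while the intermediate spinal vertices $u$ pick up hitting-time reductions of size comparable to $\pdelt(w_r) = \Delta \L(w_r)$.

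The main obstacle is the case analysis: one must enumerate the local configurations in which $\lambda$ or $\rho$ is at least two and match each to a Phase One surgery so that (i) the two bullets above hold and (ii) the nonnegative integer potential $\max(\lambda(G)-1,0) + \max(\rho(G)-1,0)$ strictly decreases. The most delicate configurations require a paired transplant $\tr{i_1}{j_1}\wedge \tr{i_2}{j_2}$ that simultaneously moves a left-spine leaf and a right-spine leaf so that their individual effects on $\L(w_r)$ and $\R(w_r)$ balance; for these one must check that the combined hitting-time changes still satisfy both Corollary~\ref{cor:criteria} and the inequality \eqref{eqn:slack} of Lemma~\ref{lem:slack}. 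Once this verification is carried out for each Phase One surgery, the potential decreases monotonically to zero in finitely many steps, yielding a seesaw $\nG$ with $\tbm(\nG) \geq \tbm(G)$.
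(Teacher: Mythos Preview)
Your outline has real gaps that would not close without substantially rethinking the argument. First, the potential $\max(\lambda-1,0)+\max(\rho-1,0)$ does \emph{not} strictly decrease under every Phase~One surgery. Surgery $\S_2=\tr{i}{i-1}\wedge\tr{j}{j+1}$ moves two bad leaves on the \emph{same} side of the spine in opposite directions; unless one of them lands in $U_{r-1}$ (or $U_r$ on the right), your $\lambda$ (resp.\ $\rho$) is unchanged. The paper's termination argument is different: the inner $\S_2$ loop terminates because repeated application eventually pushes one leaf into $U_2$ and the other into $U_{r-1}$ (making both ``good''), while the outer loop terminates because $\S_3,\S_4,\S_5$ each strictly reduce the number of non-spinal vertices. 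Your single potential cannot play both roles. Relatedly, you describe the delicate case as a paired left--right transplant, but no Phase~One surgery does this; each of $\S_2$--$\S_5$ acts on one side only.

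Second, your claim that Corollary~\ref{cor:criteria} preserves $w_r$ as best mix focus for every Phase~One surgery is false. Surgery $\S_5$ is invoked precisely when $L(w_r)=R(w_r)+1$, and after it the second focus moves from $w_{r-1}$ to $w_{r+1}$; inequality~\eqref{eq:lchange} fails and one must instead appeal to Lemma~\ref{lemma:focuscat}(d) with the roles of left and right swapped. You also never treat the focal case: if $G$ has a single focus then there is no $w_{r-1}$ and Corollary~\ref{cor:criteria} does not apply; this is exactly what $\S_1$ is for, and it must be handled separately before your bifocal analysis can begin. The paper's proof organizes all of this via the flowchart in Figure~\ref{fig:phase1}, deferring the per-surgery verifications of $\Delta\tbm\geq 0$ and focus preservation to Lemmas~\ref{lem:S1}--\ref{lem:S5}, each with its own choice of $A$, $B$, and (for $\S_5$) a direct focus check.
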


\begin{proof}
If $G$ is already a seesaw then $\nG=G$. 
Table \ref{table:phase1} shows the five surgery types employed during Phase One.  We defer the proofs that $\Delta \Tbest \geq 0$ for each of these surgeries to the lemmas that follow.

Figure \ref{fig:phase1} shows the workflow for Phase One. First, if the caterpillar has a unique focus then we use $\S_1$ to create a caterpillar with two foci. From here forward, the caterpillar will remain bifocal.  Let $w_{r-1}, w_r$ be the foci of a bifocal caterpillar with $w_r$ achieving $\Tbest$. A leaf $x \in V \backslash W$ is \emph{good} when $x \in U_{2} \cup U_{r-1} \cup U_{r} \cup U_{t-1}$. All other leaves in $V \backslash W$ are \emph{bad}. First, we repeatedly use $\S_2$ to move pairs of bad leaves on the same side of the spine (one towards the end and the other towards the center). This loop terminates whether there is at most one bad leaf on each side of the spine. At this point, we use $S_3$ to  extend the left spine and to transplant a leaf to $U_r$. This requires  there are at least two left leaves (which can only happen when  at least one is in $U_2$ since we are done with $\S_2$). After extending the spine, the previously good leaves at $U_2$ become bad. This throws us back into the $\S_2$ loop.

We exit the  $S_3$ loop when there is at most one left leaf.  At this point, we deal with the right leaves. When $L(w_r) > R(w_r) + 1$, we apply $S_4$, the right-hand surgery analogous to $S_3$. However, if $L(w_r)= R(w_r)+1$ then applying $S_4$ would create a focal caterpillar, which we choose to avoid. Instead, we apply $S_5$, which transplants a single right leaf to the end of the right spine. Applying either $S_4$ or $S_5$ might create bad right leaves, which puts us back into the $S_2$ loop. Surgeries $S_3, S_4, S_5$ reduce the number of non-spinal vertices, so Phase One must terminate. 
Ultimately, we create a caterpillar with at most one left leaf and at most one right leaf, while $U_{r-1} \cup U_{r}$ may contain many leaves. This is a seesaw graph, as desired.
\end{proof}


\begin{table}[ht]
\begin{center}
\begin{tabular}{|c|p{2.5in}|@{}c@{}|}

\hline

Surgery &  Initial Conditions & Illustration\\
\hline
$\S_1$ & \small $G$ is focal, so $L(w_r) = R(w_r)$. The transplant depends on whether $U_{t-1} = \emptyset$. &

\begin{tabular}{c}
\begin{tikzpicture}[scale=0.5,
    decoration={
      markings,
      mark=at position 1 with {\arrow[scale=1.25,black]{latex}};
    }
  ]

\begin{scope}[shift={(0,-2)}]

\draw[gray!70] (2,0) -- (2,-1);
\draw[gray!70, fill=gray!70] (2,-1) circle (4pt);

\draw[dashed] (0,0) -- (5,0);

\draw (0,0) -- (.5,0);
\draw (1.5,0) -- (2.5,0);
\draw (3.5,0) -- (5,0);

\foreach \i in  {0,4,5} 
{
\draw[fill] (\i,0) circle (4pt);
}

\draw[fill=white] (2,0) circle (6pt);
\draw[fill=white] (2,0) circle (4pt);

\draw[bend left=20, postaction=decorate] (4.75, -.25) to (2.25,-1);

\end{scope}

\node at (-1.5,-1.75) {\small{or}};

\begin{scope}

\draw[white] (0,0.5) circle (1pt);

\draw[gray!70] (2,0) -- (2,-1);
\draw[gray!70, fill=gray!70] (2,-1) circle (4pt);

\draw[dashed] (0,0) -- (5,0);

\draw (0,0) -- (.5,0);
\draw (1.5,0) -- (2.5,0);
\draw (3.5,0) -- (5,0);

\draw[gray!70] (2,0) -- (2,-1);
\draw[gray!70, fill=gray!70] (2,-1) circle (4pt);

\foreach \i in  {0,4,5} 
{
\draw[fill] (\i,0) circle (4pt);
}

\draw[fill=white] (2,0) circle (6pt);
\draw[fill=white] (2,0) circle (4pt);

\draw (4,0) -- (4,-1);
\draw[fill] (4,-1) circle (4pt);

\draw[bend left=20, postaction=decorate] (3.75, -1.25) to (2.25,-1.25);

\end{scope}

\end{tikzpicture}
\end{tabular}

\\

\hline

$\S_2$ &  \small 
$G$ has two (or more) bad left leaves 
or has 
two (or more) bad right leaves; \newline
$R(w_r) \geq L(w_{r-1})$.
&

\begin{tabular}{c}
\begin{tikzpicture}[scale=0.5,
    decoration={
      markings,
      mark=at position 1 with {\arrow[scale=1.25,black]{latex}};
    }
  ]

\draw[color=white] (0, .5) circle (1pt);

\draw[gray!70] (6,0) -- (6,-1);
\draw[gray!70, fill=gray!70] (6,-1) circle (4pt);

\draw(5,0) -- (5,-1);
\draw[fill] (5,-1) circle (4pt);

\draw[gray!70] (2,0) -- (2,-1);
\draw[gray!70, fill=gray!70] (2,-1) circle (4pt);

\draw(3,0) -- (3,-1);
\draw[fill] (3,-1) circle (4pt);

\draw[dashed] (0,0) -- (11,0);

\draw (0,0) -- (.5,0);
\draw (1.5,0) -- (3.5,0);
\draw (4.5,0) -- (6.5,0);
\draw (7.5,0) -- (9.5,0);
\draw (10.5,0) -- (11,0);

\foreach \i in  {0,2,3,5,6,11} 
{
\draw[fill] (\i,0) circle (4pt);
}

\draw[fill=white] (8,0) circle (4pt);

\draw[fill=white] (9,0) circle (6pt);
\draw[fill=white] (9,0) circle (4pt);

\draw[bend left=15, postaction=decorate] (2.9, -1.25) to (2.1,-1.25);

\draw[bend right=15, postaction=decorate] (5.1, -1.25) to (5.9,-1.25);

\node at (3.3,-1.25) {\scriptsize $x$};
\node at (4.6,-1.25) {\scriptsize $y$};

\end{tikzpicture}
\end{tabular}

\\

\hline

$\S_3$ & \small $G$ has $x \in U_2$ and another left leaf $y$;
\newline $R(w_r) > L(w_{r-1})$ &

\begin{tabular}{c}
\begin{tikzpicture}[scale=0.5,
    decoration={
      markings,
      mark=at position 1 with {\arrow[scale=1.25,black]{latex}};
    }
  ]

\draw[color=white] (2, .5) circle (1pt);

\draw[gray!70] (6,0) -- (6,-1);
\draw[gray!70, fill=gray!70] (6,-1) circle (4pt);

\draw(5,0) -- (5,-1);
\draw[fill] (5,-1) circle (4pt);

\draw[gray!70] (1,0) -- (2,0);
\draw[gray!70, fill=gray!70] (1,0) circle (4pt);

\draw(3,0) -- (3,-1);
\draw[fill] (3,-1) circle (4pt);

\draw[dashed] (2,0) -- (11,0);

\draw (2,0) -- (3.5,0);
\draw (4.5,0) -- (6.5,0);
\draw (7.5,0) -- (9.5,0);
\draw (10.5,0) -- (11,0);

\foreach \i in  {2,3,5,6,11} 
{
\draw[fill] (\i,0) circle (4pt);
}

\draw[fill=white] (8,0) circle (4pt);

\draw[fill=white] (9,0) circle (6pt);
\draw[fill=white] (9,0) circle (4pt);

\draw[bend left=20, postaction=decorate] (2.75, -1.) to (1.2,-.25);

\draw[bend right=15, postaction=decorate] (5.1, -1.25) to (5.9,-1.25);

\node at (3.35,-1.25) {\scriptsize $x$};
\node at (4.65,-1.25) {\scriptsize $y$};

\end{tikzpicture}
\end{tabular}

 \\

\hline

$\S_4$ & \small $G$ has $y\in U_{t-1}$ and another right leaf $x$;
 \newline 
$R(w_r) \geq L(w_{r-1})$ and
$L(w_r)>R(w_r)+1$ &

\begin{tabular}{c}
\begin{tikzpicture}[scale=0.5,
    decoration={
      markings,
      mark=at position 1 with {\arrow[scale=1.25,black]{latex}};
    }
  ]

\draw[color=white] (-2, .5) circle (1pt);

\draw[gray!70] (-6,0) -- (-6,-1);
\draw[gray!70, fill=gray!70] (-6,-1) circle (4pt);

\draw(-5,0) -- (-5,-1);
\draw[fill] (-5,-1) circle (4pt);

\draw[gray!70] (-1,0) -- (-2,0);
\draw[gray!70, fill=gray!70] (-1,0) circle (4pt);

\draw(-3,0) -- (-3,-1);
\draw[fill] (-3,-1) circle (4pt);

\draw[dashed] (-2,0) -- (-11,0);

\draw (-2,0) -- (-3.5,0);
\draw (-4.5,0) -- (-6.5,0);
\draw (-7.5,0) -- (-9.5,0);
\draw (-10.5,0) -- (-11,0);

\foreach \i in  {2,3,5,6,11} 
{
\draw[fill] (-\i,0) circle (4pt);
}

\draw[fill=white] (-9,0) circle (4pt);

\draw[fill=white] (-8,0) circle (6pt);
\draw[fill=white] (-8,0) circle (4pt);

\draw[bend right=20, postaction=decorate] (-2.75, -1.) to (-1.2,-.25);

\draw[bend left=15, postaction=decorate] (-5.1, -1.25) to (-5.9,-1.25);

\node at (-3.35,-1.25) {\scriptsize $y$};
\node at (-4.65,-1.25) {\scriptsize $x$};

\end{tikzpicture}
\end{tabular}

\\

\hline

$\S_5$ & \small $G$ has $y\in U_{t-1}$ and another right leaf $x$;
 \newline 
$R(w_r) \geq L(w_{r-1})$ and
$L(w_r)=R(w_r)+1$ &

\begin{tabular}{c}
\begin{tikzpicture}[scale=0.5,
    decoration={
      markings,
      mark=at position 1 with {\arrow[scale=1.25,black]{latex}};
    }
  ]

bifocal
\draw[color=white] (-2, .5) circle (1pt);

\draw(-5,0) -- (-5,-1);
\draw[fill] (-5,-1) circle (4pt);

\draw[gray!70] (-1,0) -- (-2,0);
\draw[gray!70, fill=gray!70] (-1,0) circle (4pt);

\draw(-3,0) -- (-3,-1);
\draw[fill] (-3,-1) circle (4pt);

\draw[dashed] (-2,0) -- (-10,0);

\draw (-2,0) -- (-3.5,0);
\draw (-4.5,0) -- (-5.5,0);
\draw (-6.5,0) -- (-8.5,0);
\draw (-9.5,0) -- (-10,0);

\foreach \i in  {2,3,5,10} 
{
\draw[fill] (-\i,0) circle (4pt);
}

\draw[fill=white] (-8,0) circle (4pt);

\draw[fill=white] (-7,0) circle (6pt);
\draw[fill=white] (-7,0) circle (4pt);

\draw[bend right=20, postaction=decorate] (-2.75, -1.) to (-1.2,-.25);

\node at (-3.35,-1.25) {\scriptsize $y$};
\node at (-4.65,-1.25) {\scriptsize $x$};

\end{tikzpicture}
\end{tabular}

\\
\hline

\end{tabular}
\hspace{-1in}\caption{The Phase One tree surgeries. Except for $\S_1$, the caterpillar is bifocal with best mix focus $w_r$ and another focus $w_{r-1}$. White vertices are foci and the circled vertices are best mix foci.}
\label{table:phase1}
\end{center}
\end{table}

\begin{figure}[t]
\begin{center}
\includegraphics[width=5in]{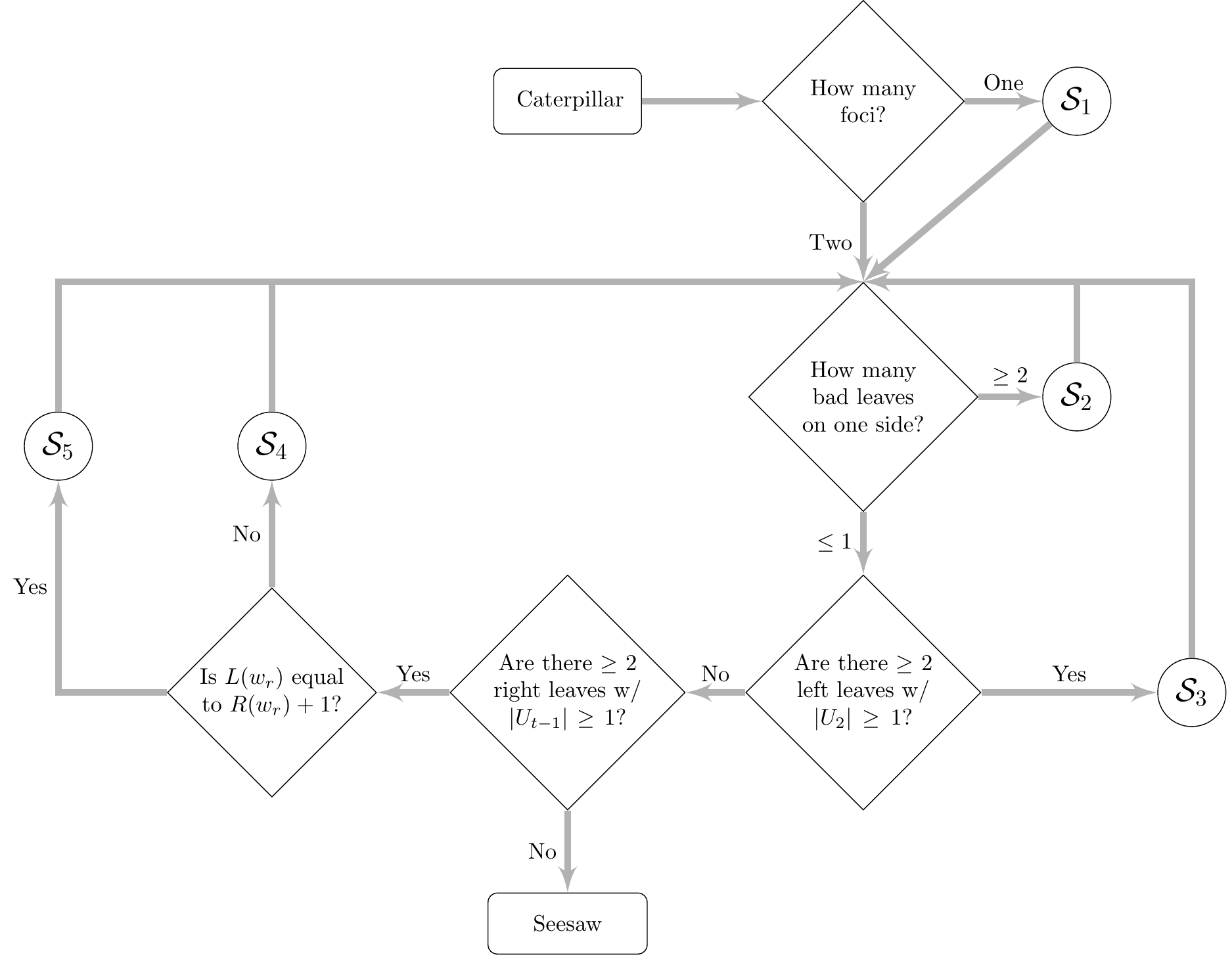}
\end{center}
\caption{Phase One of the algorithm, turning a caterpillar into a seesaw.}
\label{fig:phase1}
\end{figure}

Next, we prove that surgeries $S_1, S_2, S_3, S_4, S_5$ each result in $\Delta \Tbest \geq 0$.
If we start with a focal caterpillar $G$, we use surgery $\S_1$ to create a bifocal caterpillar $\nG = \S_1(G)$. 
Depending on the structure of $G$, we use one of two leaf transplants: $\tr{{t-1}}{r}$ or  $\str{t}{r}$.
We note that $\str{t}{r}$ is the only transplant that removes a leaf from the spine.

%
%
\begin{lemma} \label{lem:S1}
			Let $G$ be a caterpillar with a single focus $w_r$ and with the spine indexed such that $d(w_1,w_r)\geq d(w_t,w_r)$. If $U_{t-1} \neq \emptyset$, then let $S_1 = \tr{{t-1}}{r}$. If $U_{t-1} = \emptyset$ then let $\S_1 = \str{t}{r}$.
The caterpillar  $\nG = \S_1(G)$ is bifocal and $\Delta \tbm > 0$. 
\end{lemma}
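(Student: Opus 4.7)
The argument splits into two cases depending on whether $U_{t-1}$ is nonempty. In each case the plan is: (i) verify that $\nG$ is bifocal with foci $w_{r-1}$ and $w_r$, (ii) verify that $w_r$ is still a best mix focus of $\nG$, and (iii) apply Corollary \ref{cor:slack} to conclude $\Delta \Tbest > 0$.

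For $U_{t-1} \neq \emptyset$, set $\nG = \tr{t-1}{r}(G)$. Apply equation \eqref{eq:rightleft2} with $i = t-1$ and $j = r$ (and observe directly that hitting times to $w_r$ from left-side vertices do not change) to obtain $\Delta \L(w_r) = 0$, $\Delta \R(w_r) = -2(t-1-r) < 0$, $\Delta H(x, w_r) = -H_G(w_{t-1}, w_r) < 0$ for the moved leaf $x$, and $\Delta H(v, w_r) \leq 0$ for every $v$. Since $G$ is focal, $\L_G(w_r) = \R_G(w_r)$, so after surgery $\L_{\nG}(w_r) > \R_{\nG}(w_r)$ and $\hp_{\nG}(w_r) = \L_G(w_r) = \hp_G(w_r)$, giving $\pdelt(w_r) = 0$. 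The identity $\R_G(w_{r-1}) - \L_G(w_{r-1}) = H_G(w_{r-1},w_r) + H_G(w_r,w_{r-1}) = 2n-2$, combined with the fact that $\R(w_{r-1})$ decreases by at most $2(t-1-r)$, yields $\R_{\nG}(w_{r-1}) > \L_{\nG}(w_{r-1})$, so $\nG$ is bifocal by Lemma \ref{lemma:focuscat}(b). The hypothesis $d(w_1, w_r) \geq d(w_t, w_r)$ (i.e.\ $r-1 \geq t-r$) translates into $H_G(w_{r-1}, w_r) \geq 2(t-1-r)$, which is exactly what is needed to conclude $\L_{\nG}(w_{r-1}) \leq \R_{\nG}(w_r)$, so $w_r$ is a best mix focus by Lemma \ref{lemma:focuscat}(c). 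Corollary \ref{cor:slack} then yields $\Delta \Tbest \geq 0$. For strictness, examine the sum in the proof of Lemma \ref{lem:slack} with $A = \emptyset$: the $v=x$ term contributes $-\Delta H(x, w_r) = H_G(w_{t-1}, w_r) > 0$ (using $t-1 > r$, since $G$ is not a star).

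For $U_{t-1} = \emptyset$, set $\nG = \str{t}{r}(G)$; this detaches the spinal leaf $w_t$ from $w_{t-1}$ and reattaches it to $w_r$, shortening the spine by one. Since the standard transplant formulas \eqref{eq:leftleft2}--\eqref{eq:rightright2} no longer apply, compute $\Delta H$ directly using equation \eqref{eq:adjhtime}: every subtree $V_{w_{k+1}:w_k}$ for $r \leq k \leq t-2$ loses the vertex $w_t$, so $H_{\nG}(w_{k+1}, w_k) = H_G(w_{k+1}, w_k) - 2$, yielding $\R_{\nG}(w_r) = \R_G(w_r) - 1 - 2(t-1-r)$, while $\L_{\nG}(w_r) = \L_G(w_r)$. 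The displaced leaf gives $\Delta H(w_t, w_r) = 1 - \R_G(w_r) < 0$, and all other $\Delta H(v, w_r) \leq 0$. Bifocality and the location of the best mix focus are verified analogously to the first case. Corollary \ref{cor:slack} again gives $\Delta \Tbest \geq 0$, and strictness follows from the positive $v = w_t$ contribution $\R_G(w_r) - 1 > 0$.

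The main obstacle is the second case. Since the spine shrinks, each hitting time change must be derived from subtree sizes rather than from the transplant formulas. More delicately, in the extremal subcase $t = 2r-1$ with no non-spinal leaves apart from those at $w_r$, the inequality $\L_{\nG}(w_{r-1}) \leq \R_{\nG}(w_r)$ collapses to an equality, so $\nG$ has both $w_r$ and $w_{r-1}$ as best mix foci rather than $w_r$ uniquely. The argument must therefore state that ``$w_r$ is \emph{a} best mix focus'' (not the unique one), which is still enough to drive Corollary \ref{cor:slack}.
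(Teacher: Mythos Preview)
Your proposal is correct and follows essentially the same route as the paper: compute $\Delta L(w_r)=0$ and $\Delta R(w_r)<0$, verify via Lemma~\ref{lemma:focuscat} (the paper packages this through Lemma~\ref{lem:criteria}) that $\nG$ is bifocal with $w_r$ a best mix focus, and then invoke Corollary~\ref{cor:slack}. Your treatment is in fact slightly more careful than the paper's in two respects: you explicitly justify the strict inequality $\Delta\Tbest>0$ by exhibiting a strictly positive term in the sum, and you flag the boundary subcase where $\L_{\nG}(w_{r-1})=\R_{\nG}(w_r)$ so that $w_r$ is only \emph{a} best mix focus.
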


 \begin{proof}
We  use Lemma \ref{lem:criteria} for our proof. The spine is indexed so that $r-1 \geq t-r$ and the unique focus means that $L_G(w_r)  = R_G(w_r).$ For both surgeries under consideration, $\Delta R(w_r) < 0$ and $\Delta L(w_r) =0$, so equation \eqref{eq:lchange} is satisfied. Next, we observe that
 $R(w_r) - L(w_{r-1}) = L(w_r) - L(w_{r-1}) = H(w_{r-1}, w_r) = \sum_{v \in V_{w_{r-1}:w_r}} \deg(v) \geq 2(r-1)-1$ by equation \eqref{eq:adjhtime}, so it  suffices to show that $\Delta L(w_{r-1}) - \Delta R(w_r) = - \Delta R(w_r) \leq 2(r-1) -1$ to verify equation \eqref{eq:newtbmfoc}.
There are two cases.
First, suppose that $U_{t-1} \neq \emptyset$, so that  $\S_1=\tr{t-1}{r}$ does not alter the spine. By equation \eqref{eq:rightright2}, we have $ -\Delta R(w_r) = 2(t-1-r) < 2(r-1)-1.$
Next, suppose that $U_{t-1} = \emptyset$, so that $\S_1=\str{t}{r}$ does alter the spine. We have
\begin{eqnarray*}
-\Delta R(w_r) &=& -H_G(w_t,w_r) + H_{\nG}(w_{t-1},w_r) \,= \, -1 -  \Delta H(w_{t-1},w_r) \\
&=& -1+2(t-1-r) \, = \, 2(t-r)-3 \,  < \, 2(r-1)-1.
\end{eqnarray*}
In either case, the conditions of Lemma \ref{lem:criteria} are satisfied, so the foci do not change. The result now follows from Corollary \ref{cor:slack} since $\Delta L(w_r)=0 \geq \Delta H(v,w_r)$ for all $v \in V$.
\end{proof}

%
%

 Next, we discuss surgery $\S_2$ which  transplants a pair of bad left-hand leaves,  moving one toward $U_2$ and one toward $U_{r-1}$, as shown in 
Table \ref{table:phase2}. We repeat $\S_2$ until the left side of the caterpillar has at most one bad leaf.

		\begin{lemma} \label{lem:S2}
			Let $G$ be a bifocal caterpillar with leaves $x\in U_i$ and $y\in U_j$ where either $2 < i \leq j < r-1$ or $r<i\leq j <t-1$. If $\S_2=\tr{i}{i-1}\wedge\tr{j}{j+1}$, then $\Delta \tbm > 0$.
		\end{lemma}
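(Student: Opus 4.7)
The plan is to apply Lemma \ref{lem:slack} with $A = \{x\}$ in the left case $2 < i \leq j < r-1$ (and $A = \{y\}$ by symmetry in the right case), the singleton being the transplanted leaf that moves farther from the best mix focus $w_r$. I describe only the left case; the right case is entirely analogous by a left--right reflection of the argument.

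First I would verify that the surgery preserves the foci via Corollary \ref{cor:criteria}. Summing the contributions of $\tr{i}{i-1}$ and $\tr{j}{j+1}$ to $\Delta L(w_r) = \Delta H(w_1, w_r)$ using \eqref{eq:leftleft2} and \eqref{eq:leftright2} gives $(+2) + (-2) = 0$, and the same cancellation yields $\Delta L(w_{r-1}) = 0$. Since both transplants stay strictly to the left of $w_r$, no $|V_{k:k+1}|$ with $k \geq r$ is altered, so $\Delta R(w_r) = \Delta R(w_{r-1}) = 0$. Corollary \ref{cor:criteria} then applies: $w_{r-1}$ and $w_r$ remain the foci of $\nG$ with $w_r$ still the best mix focus, and $w_1$ remains $w_r$-pessimal, so $\pdelt(w_r) = \Delta L(w_r) = 0$.

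Next I would tabulate $\Delta H(v, w_r)$ for every $v$ by summing the appropriate cases of \eqref{eq:leftleft2} and \eqref{eq:leftright2}. The result is $\Delta H(x, w_r) = H_G(w_{i-1}, w_i) > 0$, $\Delta H(y, w_r) = -H_G(w_j, w_{j+1})$, $\Delta H(v, w_r) = -2$ for every $v \notin \{x, y\}$ lying in $V_k$ for some $i \leq k \leq j$, and $\Delta H(v, w_r) = 0$ otherwise; in particular $A = \{x\}$.

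Finally I would verify inequality \eqref{eqn:slack} with $B = \overline A$. Since $\pdelt(w_r) = 0$, the identity from the proof of Lemma \ref{lem:slack} becomes $\sum_v g(v) = -\sum_v [\deg_{\nG}(v)\Delta H(v, w_r) + \Delta \deg(v) H_G(v, w_r)]$. Only the spine vertices $w_{i-1}, w_i, w_j, w_{j+1}$ contribute to the degree-change part (with $\Delta \deg = +1, -1, -1, +1$, collapsing to $\Delta \deg(w_i) = -2$ when $i = j$), and the other summand reduces by the table above. Invoking $|V_{j:j+1}|_G - |V_{i-1:i}|_G = (j - i + 1) + \sum_{k=i}^{j} |U_k|$ and $H_G(w_a, w_b) = 2|V_{a:b}| - 1$, a direct calculation then yields $\sum_v g(v) = 8\bigl[(j - i) + \sum_{k=i}^{j} |U_k|\bigr] \geq 16 > 0$, using $|U_i|, |U_j| \geq 1$. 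Because $g(x) = -H_G(w_{i-1}, w_i)$ is minus the LHS of \eqref{eqn:slack}, we get RHS$\,-\,$LHS$\,= \sum_v g(v) > 0$ strictly, so Lemma \ref{lem:slack} delivers $\Delta \tbm > 0$. The main obstacle is the careful bookkeeping in the final $\sum_v g(v)$ computation, in particular the degenerate case $i = j$ where both transplanted leaves originate at a common spine vertex and the $\Delta \deg$ values coalesce; once this is handled the formula telescopes cleanly thanks to the tree identity on $|V_{j:j+1}|_G - |V_{i-1:i}|_G$.
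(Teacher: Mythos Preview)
Your proof is correct and follows essentially the same strategy as the paper: verify via Corollary~\ref{cor:criteria} that $\Delta L = \Delta R = 0$ on both foci so the focal structure is preserved, then apply Lemma~\ref{lem:slack} with $A=\{x\}$. The only difference is cosmetic: the paper takes the small set $B=\{w_{i-1},w_i,w_j,w_{j+1},y\}$ and bounds the right-hand side of \eqref{eqn:slack} using $H_G(w_{i-1},w_i)<H_G(w_j,w_{j+1})$, whereas you take $B=\overline{A}$ and compute $\sum_v g(v)$ exactly via the telescoping identity $|V_{j:j+1}|-|V_{i-1:i}|=(j-i+1)+\sum_{k=i}^j|U_k|$, arriving at the clean closed form $8\bigl[(j-i)+\sum_{k=i}^j|U_k|\bigr]\geq 16$. (Incidentally, your value $\Delta H(y,w_r)=-H_G(w_j,w_{j+1})$ is the correct one; the paper's ``$2-H_G(w_j,w_{j+1})$'' carries a spurious $+2$, though this only makes their lower bound looser and does not affect their argument.)
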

		
		\begin{proof} 
We consider the left-hand spine case $2<i\leq j<r-1$. The right-hand spine proof is analogous,   switching the roles of $i$ and $j$.
Suppose that there are  bad leaves $x \in U_i$ and $y \in U_j$. 
First, we observe that $\Delta \L (w_r) = 0 = \Delta \L(w_{r-1})$: these net hitting time changes are  $2-2=0$ as per equations \eqref{eq:leftleft2} and \eqref{eq:leftright2}. The right hand spine is unaffected, so 
$\Delta \R (w_r) = 0 = \Delta \R(w_{r-1})$.
Corollary \ref{cor:criteria} guarantees that $\nG$ also has foci $w_{r-1}$ and best mix focus $w_r$.

Having established that the foci do not change, we  show that $\tbm$ increases. We must show  that
$
				\Delta\L(w_r)=\Delta H(w_1,w_r)>\Delta H(\pi,w_r).
$
We only argue the case $i<j$, as the case $i=j$ is a straight-forward adaptation.
Equations \eqref{eq:leftleft2} and \eqref{eq:leftright2} give
$$
\begin{array}{rclcrcll}
\Delta H(x,w_r) &=& H_G(w_{i-1},w_i), & &
\Delta H(v,w_r)&=& -2 &\mbox{ for } v \in (V_i \cup \cdots \cup V_j) \backslash \{x, y \}, \\
\Delta H(y,v_r) &=& 2-H_G(w_j,w_{j+1}), &&
\Delta H(v,w_r) &=&0 & \mbox{ for all other } v.
\end{array}
$$
The only degree changes are $\Delta \deg(w_i) = -1 = \Delta \deg(w_j)$ and  		
$\Delta \deg(w_{i-1}) = 1 = \Delta \deg(w_{j+1})$. (When $i=j$, the only negative degree change is  $\Delta \deg(w_i)=-2$.)	
We use Lemma \ref{lem:slack} with $A= \{x \}$ to show that $\tbm$ does not decrease. 
The left  side of inequality \eqref{eqn:slack} simplifies to $\Delta H(x,w_r) = H_G(w_{i-1},w_i)$. It remains to show that this value is a lower bound for the right side of \eqref{eqn:slack}. 
Taking  $B= \{w_{i-1}, w_i, w_j, w_{j+1}, y \}$, we obtain
\begin{align*}
\MoveEqLeft  -H_G(w_{i-1},w_r) + \left(2\deg_{\nG}(w_i)+H_G(w_i,w_r) \right)+ \left(2\deg_{\nG}(w_j)+H_G(w_j,w_r)\right)\\
					&\qquad -H_G(w_{j+1},w_r)-(2-H_G(w_j,w_{j+1}))\\
					&= 2(\deg_G(w_i)+\deg_G(w_j)-2)-2+H_G(w_i,w_r)  -H_G(w_{i-1},w_r) \\
					&\qquad +H_G(w_j,w_r) -H_G(w_{j+1},w_r)+H_G(w_j,w_{j+1})\\
					&> 2H_G(w_j,w_{j+1})-H_G(w_{i-1},w_i) 
					>H_G(w_{i-1},w_i),
\end{align*}
where the last inequality uses equation \eqref{eq:adjhtime} twice to justify  $H_G(w_{i-1},w_i)<H_G(w_j,w_{j+1})$. 
				Thus the condition of  Lemma \ref{lem:slack} is satisfied, so $\Delta \tbm > 0$. 
\end{proof}

%
%

Once there is at most one bad left leaf, we  increase the spine length, starting with the left side.
Surgery $\S_3$ requires   at least two left leaves, one of which must be in $U_2$. 
We also require $R(w_r)>L(w_{r-1})$, meaning that $w_r$ is the unique best mix focus. If this is not the case (that is, $R(w_r) = L(w_{r-1})$), then we will take $w_{r-1}$ to be the best mix focus, reverse the labeling of the spine, and then apply $\S_4$ below.

		\begin{lemma} \label{lem:S3}
			Let $G$ be a bifocal caterpillar with $R(w_r) > L(w_{r-1})$ and with distinct vertices $x\in U_2$ and $y \in U_i$ where $2\leq i\leq r-2$. If $\S_3=\tr{2}{1}\wedge\tr{i}{i+1}$, then $\Delta \tbm \geq 0$.
		\end{lemma}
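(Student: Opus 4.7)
The plan is to mirror the argument for $\S_2$ in Lemma \ref{lem:S2}, with two new subtleties arising from the spine-extending transplant $\tr{2}{1}$: tracking cancellations between the two transplants along the spine, and invoking Lemma \ref{lem:criteria} in full (Corollary \ref{cor:criteria} no longer suffices). First I compute each $\Delta H(v,w_r)$ by summing the contributions of equations \eqref{eq:leftleft2} (for $\tr{2}{1}$, with indices $i=2$, $j=1$) and \eqref{eq:leftright2} (for $\tr{i}{i+1}$). The key cancellation occurs at $w_1$: the transplant $\tr{2}{1}$ contributes $+2$ and $\tr{i}{i+1}$ contributes $-2$, giving $\Delta H(w_1,w_r)=0$. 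Similar bookkeeping yields $\Delta H(w_k,w_r)=-2$ for $2\le k\le i$, and $\Delta H(v,w_r)=0$ for every other unmoved left-side vertex as well as for the entire right side. The two moved leaves satisfy $\Delta H(x,w_r)=H_G(w_1,w_2)=1$ (so $x$ becomes the new leftmost spinal vertex, with $\L_{\nG}(w_r)=\L_G(w_r)+1$) and $\Delta H(y,w_r)=-H_G(w_i,w_{i+1})<0$. Consequently $\Delta\L(w_r)=\Delta\L(w_{r-1})=1$ while $\Delta\R(w_r)=\Delta\R(w_{r-1})=0$.

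Next I verify that $w_r$ remains the best mix focus of $\nG$, with $w_{r-1}$ still an adjacent focus, by checking both conditions of Lemma \ref{lem:criteria}. Inequality \eqref{eq:lchange} is immediate since $L_G(w_r)-R_G(w_r)\ge 1>-1=\Delta\R(w_r)-\Delta\L(w_r)$. Inequality \eqref{eq:newtbmfoc} rearranges to $R_G(w_r)-L_G(w_{r-1})\ge 1$, and this is precisely where the hypothesis $R(w_r)>L(w_{r-1})$ (an integer strict inequality, so equivalent to $\ge 1$) is used. I expect this to be the main technical obstacle: because the spine extension forces $\Delta\L(w_{r-1})=1>0$, the best mix focus could in principle migrate away from $w_r$ unless the original gap $R(w_r)-L(w_{r-1})$ is already at least $1$, which is exactly why $\S_3$ is gated on $R(w_r)>L(w_{r-1})$ and why Corollary \ref{cor:criteria} does not suffice here.

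Finally, since the $w_r$-pessimal vertex is the leftmost spinal vertex in both $G$ and $\nG$, we have $\pdelt(w_r)=\Delta\L(w_r)=1$. The tally from the first paragraph shows $\Delta H(v,w_r)\le 1$ for every $v\in V$, with equality attained only at $v=x$. Thus Corollary \ref{cor:slack} applies and yields $\Delta\tbm\ge 0$.
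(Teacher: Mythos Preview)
Your argument is correct and follows the same route as the paper: compute $\Delta L(w_r)=\Delta L(w_{r-1})=1$ and $\Delta R(w_r)=\Delta R(w_{r-1})=0$, invoke Lemma~\ref{lem:criteria} (using the hypothesis $R(w_r)>L(w_{r-1})$ for \eqref{eq:newtbmfoc}), and then apply Corollary~\ref{cor:slack} since every $\Delta H(v,w_r)\le 1$. One harmless slip: any non-spine leaf in $U_k$ with $2\le k\le i$ (other than $x,y$) also has $\Delta H(v,w_r)=-2$, not $0$, but this only strengthens the needed inequality.
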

	
		\begin{proof}
First, we use Lemma \ref{lem:criteria} to show that the foci do not change. 
		This surgery extends the left-hand side of the spine. The left-pessimal hitting times to $w_{r-1}$ and $w_r$ increase by $\Delta L(w_r) = \Delta L(w_{r-1}) = 3-2=1$ (using equation  \eqref{eq:adjhtime} for the effect of $\tr{2}{1}$ and equation \eqref{eq:leftleft2} for $\tr{i}{i+1}$). The right-pessimal hitting times do not change, $\Delta R(w_r) = \Delta(w_{r-1})=0$. Therefore, equation \eqref{eq:lchange} is satisfied.
By assumption, $R(	w_r) - L(w_{r-1}) \geq 1 = \Delta\L(w_{r-1})-\Delta\R(w_r)$ so  inequality \eqref{eq:newtbmfoc} holds.
We have,  $\Delta H(v,w_r)\leq\Delta\L(w_r)$ for all $v\in V$ holds for all $v \in V$ since $H(x,w_r)$ is the only hitting time to $w_r$ that increases; all others decrease or are constant. By Corollary \ref{cor:slack}, we have $\Delta \tbm \geq 0$. 
			\end{proof}

%
%

Surgery $\S_4$ is the right-hand version of  $\S_3$. However, if
$
\L(w_r)=\R(w_r)+1
$
then applying $\S_4$ would lead to
$
\L_{\nG}(w_r)=\R_{\nG}(w_r)
$
which indicates that $\nG$ has one focus by Lemma \ref{lemma:focuscat}(a). We choose to avoid this situation,
So we require that  $\L(w_r) > \R(w_r)+1$ and handle the case $\L(w_r)=\R(w_r)+1$ with $S_5$ below.

			\begin{lemma} \label{lem:S4}
				Let $G$ be a bifocal caterpillar with
				$
					\L(w_r)>\R(w_r)+1
				$				
				 that contains distinct vertices $y  \in U_{t-1}$ and $x \in U_i$ where $r < i < t$.
				If $\S_4=\tr{t-1}{t-2}\wedge\tr{i}{i-1}$, then $\Delta \tbm > 0$.
			\end{lemma}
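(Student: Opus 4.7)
My plan is to follow the template established by Lemma \ref{lem:S3}, mirroring its argument from the left side of the spine to the right. The proof has two stages: first verify that $\nG = S_4(G)$ is bifocal with the same best mix focus $w_r$, then show $\Delta \tbm > 0$.

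For the first stage, I would compute $\Delta L(w_r)$, $\Delta R(w_r)$, $\Delta L(w_{r-1})$, and $\Delta R(w_{r-1})$ using equations \eqref{eq:rightleft2} and \eqref{eq:rightright2}. Since both transplants act on the right side of the spine, the left hitting times are unchanged: $\Delta L(w_r) = \Delta L(w_{r-1}) = 0$. I would then apply Lemma \ref{lem:criteria}. The hypothesis $L(w_r) > R(w_r) + 1$ supplies the slack needed for inequality \eqref{eq:lchange}, which is exactly why the borderline case $L(w_r) = R(w_r) + 1$ is excluded here and handled instead by surgery $S_5$. Inequality \eqref{eq:newtbmfoc} follows from the precondition $R(w_r) \geq L(w_{r-1})$ combined with the signs of $\Delta R(w_r)$ and $\Delta L(w_{r-1})$.

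For the second stage, I would identify the set $A = \{v : \Delta H(v, w_r) > \pdelt(w_r)\}$. Most hitting-time changes to $w_r$ are non-positive because the right-side transplants pull mass toward the center, so $A$ will contain at most the single leaf moved by $\tr{t-1}{t-2}$. I would then choose $B$ as a small set of spine vertices near the transplant sites together with the other transplanted leaf $x$, analogous to the set $\{w_{i-1}, w_i, w_j, w_{j+1}, y\}$ used in Lemma \ref{lem:S2}. The left-hand side of \eqref{eqn:slack} reduces to a single term, while the right-hand side telescopes via \eqref{eq:adjhtime} applied to the shifted counts $|V_{w_k : w_{k+1}}|$; strictness $\Delta \tbm > 0$ then follows from the strict positivity of the $H_G$-type terms arising from the interior transplant. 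The main obstacle is the simultaneous bookkeeping: each transplant alters hitting times and degrees at several spine vertices, and combining the two (whose formulas \eqref{eq:rightleft2} and \eqref{eq:rightright2} are stated individually) requires careful accounting of the interaction at the shared vertices $w_{t-2}$, $w_{t-1}$, $w_{i-1}$, and $w_i$.
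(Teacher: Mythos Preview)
Your proposal is correct and follows essentially the same approach as the paper: verify the foci are preserved via Lemma~\ref{lem:criteria} (with $\Delta L(w_r)=\Delta L(w_{r-1})=0$ and $\Delta R(w_r)=\Delta R(w_{r-1})=1$, so the hypothesis $L(w_r)>R(w_r)+1$ is exactly what handles \eqref{eq:lchange}), then apply Lemma~\ref{lem:slack} with $A=\{y\}$. The paper takes $B=\{w_{i-1},w_i,w_{t-1},w_t\}$ rather than including $x$, but either choice suffices; the left side of \eqref{eqn:slack} is just $\Delta H(y,w_r)=1$, and the right side reduces to $H_G(w_i,w_{i-1})-H_G(w_t,w_{t-1})+2(\deg_{\nG}(w_i)+\deg_{\nG}(w_{t-1})-\deg_{\nG}(w_t))>1$, with no real interaction bookkeeping needed.
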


\begin{proof}
This surgery extends the spine on the right-hand side. Analgous to  the previous proof, this time the left-pessimal hitting times have $\Delta L(w_{r-1})= 0 = \Delta L(w_r)$ and the right-pessimal hitting times have
$\Delta R(w_{r-1}) = 1 = \Delta R(w_r)$.
Inequality \eqref{eq:lchange} holds because $L(w_r) - R(w_r) > 1 = \Delta\R(w_r)-\Delta\L(w_r),$ and
inequality \eqref{eq:newtbmfoc} holds because $\Delta\L(w_{r-1})-\Delta\R(w_r)<0$. By Lemma \ref{lem:criteria}, $\nG$ has focus $w_{r-1}$ and best mix focus $w_r$.

Next, we show that $\Delta \Tbest > 0$ using Lemma \ref{lem:slack} with $A=\{ y \}$.
We have $\Delta L(w_r) <  \Delta H(y, w_r) = \Delta R(w_r)$, while $\Delta H(v,w_r) \leq 0 = \Delta L(w_r)$ for $v \neq y$. 
The left hand side of inequality \eqref{eqn:slack} equals $\Delta H(y,w_r) = 1$. 
We take $ B= \{ w_{i-1}, w_i, w_{t-1}, w_t \}$ because the contribution from each vertex in $\overline{A} \backslash B$ is positive.
 Using equation \eqref{eq:rightright2}, this sum is
\begin{align*}
\MoveEqLeft
-H_G(w_{i-1},w_r)  + \left( 2\deg_{\nG}(w_i)+H_G(w_i,w_{r}) \right) 
+ \left(2\deg_{\nG}(w_{t-1})+H_G(w_{t-1},w_r) \right) \\
&\qquad
+ \left(-2\deg_{\nG}(w_t) -H_G(w_t,w_r) \right)\\
&=
 H_G(w_i,w_{i-1}) -H_G(w_t, w_{t-1})
+ 2 \left(  \deg_{\nG}(w_i) + \deg_{\nG}(w_{t-1}) -\deg_{\nG}(w_t) \right)
\\
& > H_G(w_i,w_{i-1}) > 1.
\end{align*}
By Lemma \ref{lem:slack}, $\Delta \Tbest > 0$.
\end{proof}

%
%

We now consider surgery $\S_5$, which is only used when $\L(w_r)=\R(w_r)+1$. This is one of the crucial moments in our algorithm:   a minor change threatens the  balance described in Lemma \ref{lemma:focuscat}.
In fact, surgery
 $\S_5=\tr{t-1}{t}$ is the first surgery that shifts the location of the foci. The resulting graph is bifocal with best mix focus $w_r$, but the second focus moves from $w_{r-1}$ to $w_{r+1}$.

\begin{lemma} \label{lem:S5}
Let $G$ be a bifocal caterpillar with $y\in U_{t-1}$ such that
$
L(w_r)= R(w_r)+1.
$
If $\S_5=\tr{t-1}{t}$, then $\Delta \tbm > 0$.
\end{lemma}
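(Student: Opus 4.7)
The plan is to analyze $S_5=\tr{t-1}{t}$ by direct computation, since this is the unique surgery in Phase One in which the pair of foci migrates (from $\{w_{r-1},w_r\}$ to $\{w_r,w_{r+1}\}$), placing it outside the reach of Lemma \ref{lem:criteria}. This is also the main obstacle: the perturbation machinery of Lemma \ref{lem:criteria} and Corollary \ref{cor:criteria} is tailored to confirm that the same pair of foci persists, whereas here the companion focus jumps from $w_{r-1}$ to $w_{r+1}$ and the $w_r$-pessimal vertex migrates across the entire spine, so the new focus structure must be established from first principles. The surgery detaches $y\in U_{t-1}$ and reattaches it to the former right leaf $w_t$, extending the spine by one so that $y$ becomes the new right endpoint. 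Applying equation \eqref{eq:rightright2} with $i=t-1$, $j=t$, the only nonzero hitting-time changes to $w_r$ are $\Delta H(y,w_r)=H_G(w_t,w_{t-1})+2=3$ and $\Delta H(w_t,w_r)=2$; the only degree changes are $\Delta\deg(w_{t-1})=-1$ and $\Delta\deg(w_t)=+1$; and nothing to the left of $w_r$ is affected, so $\Delta\L(w_r)=0$.

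Next I would certify the focus structure of $\nG=S_5(G)$. From $\L_G(w_r)=\R_G(w_r)+1$ and $\Delta\R(w_r)=3$ we get $\R_{\nG}(w_r)=\L_G(w_r)+2>\L_{\nG}(w_r)$, so the unique $w_r$-pessimal vertex in $\nG$ is the new endpoint $y$. The analogous right-side calculation at $w_{r+1}$, with $a=H_G(w_r,w_{r+1})$ and $b=H_G(w_{r+1},w_r)$ satisfying $a+b=2(n-1)$, yields $\L_{\nG}(w_{r+1})-\R_{\nG}(w_{r+1})=2n-4>0$. A brief check that every other spine vertex has strictly larger pessimal hitting time confirms that $w_r$ is the primary focus of $\nG$, and by Definition \ref{def:focus} the vertex $w_{r+1}$ is then a secondary focus. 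Applying Theorem \ref{thm:tbmvertex} to the pair $(w_r,w_{r+1})$, the criterion for $w_r$ to remain a best mix focus reads $H_{\nG}(y,w_{r+1})\leq H_{\nG}(w_1,w_r)$, which reduces to $b\geq 2$; this is comfortable because the bifocal hypothesis together with $\L_G(w_r)=\R_G(w_r)+1$ forces $t\geq r+2$, hence $|V_{w_{r+1}:w_r}^G|\geq 3$ and $b\geq 5$.

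Finally I would conclude via equation \eqref{eq:tbmchange2}. Since $\pdelt(w_r)=\R_{\nG}(w_r)-\L_G(w_r)=2$, it remains only to bound $\Delta H(\pi,w_r)$. Expanding
\[
\Delta H(\pi,w_r)=\frac{1}{2(n-1)}\sum_{v\in V}\bigl(\deg_{\nG}(v)\,\Delta H(v,w_r)+\Delta\deg(v)\,H_G(v,w_r)\bigr),
\]
only $v\in\{y,w_t,w_{t-1}\}$ contribute, and their terms total $1\cdot 3+2\cdot 2+(-1)(\R(w_r)-1)+1\cdot \R(w_r)=8$. Hence $\Delta H(\pi,w_r)=4/(n-1)$, and $\Delta\tbm=2-4/(n-1)>0$ for all $n\geq 4$, as required.
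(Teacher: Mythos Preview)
Your proof is correct and follows essentially the same approach as the paper: identify the new focus pair $\{w_r,w_{r+1}\}$ with $w_r$ remaining a best mix focus, compute $\pdelt(w_r)=2$, and then show $\Delta H(\pi,w_r)<2$. The only notable difference is that you compute $\Delta H(\pi,w_r)=4/(n-1)$ exactly (since only three vertices contribute), whereas the paper bounds this quantity via Lemma~\ref{lem:slack} with $A=\{y\}$ and $B=\{w_{t-1},w_t\}$; your direct computation is slightly crisper here and yields the exact value $\Delta\tbm=2-4/(n-1)$. For the focus verification, the paper invokes Lemma~\ref{lemma:focuscat}(d) with the roles of left and right swapped (checking $\R_{\nG}(w_r)>\L_{\nG}(w_r)$ and $\R_{\nG}(w_{r+1})\leq\L_{\nG}(w_r)$), which is a bit more streamlined than your route through Definition~\ref{def:focus} and Theorem~\ref{thm:tbmvertex}, but the content is the same.
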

We note that during Phase One, we will also have a second right leaf (otherwise we are done with the right spine). However, our proof does not require the existence of such a leaf.
	
	\begin{proof}
Let $\nG = \S_5(G)$ and let $y \in V_{t-1}$ be the transplanted vertex. The changes in the left and right hitting times to the foci  are
$\Delta\L(w_{r-1})=0= \Delta\L(w_{r})$
and
$\Delta\R(w_{r-1})= 3 =  \Delta\R(w_r).$
Crucially, inequality \eqref{eq:lchange} is not satisfied. 
 We now verify that $w_r$ is the best mix focus of $\nG$ and that $w_{r+1}$ is the other focus. We use Lemma \ref{lemma:focuscat} (d), replacing $r$ with $r+1$ and swapping left for right. In other words we must show that $R(w_r) > L(w_r)$ and $R(w_{r+1}) \leq L(w_r)$. Observe that
$
\R_{\nG}(w_r)
=\R_G(w_r)+3
>\L_G(w_r)
=\L_{\nG}(w_r),
$
and
$$
\R_{\nG}(w_{r+1})
=\R_G(w_{r+1})+3 < \R_G(w_{r+1}) + H_{\nG} ( w_{r+1}, w_r)
= \R_G(w_r)
< \L_G(w_r)
=\L_{\nG}(w_r).
$$
Next, we  show that $\tbm(\nG)>\tbm(G)$ using Lemma \ref{lem:slack}. Note that the surgery shifts the foci, so $w_1$ is the $G$-pessimal vertex for $w_r$, while $y$ is the $G'$-pessimal vertex for $w_r$. We find that $\pdelt(w_r)=2$ because
$$
\hp_{\nG}(w_r) =  H_{\nG}(y,w_r) = 3+H_G(w_t,w_r) =2+H_G(w_1,w_r) = 2 + \hp_G(w_r).
$$
We will use Lemma \ref{lem:slack} with $A=\{y\}$.
The left side of
equation \eqref{eqn:slack} is
$
\deg_{\nG}(y)(\Delta H(y,w_r)-2)=3-2=1.
$
We take $B =  \{y, w_{t-1},w_t\}$ since $v \notin B$ means that $\Delta\deg(v)=0$  and 
$ \Delta H(v,w_r)\leq 2=\pdelt(w_r).$
The right hand side of equation \eqref{eqn:slack} is 
$$
\left( 2\deg_{\nG}(w_{t-1})+H_G(w_{t-1},w_r) \right) -H_G(w_t,w_r)
			= 2\deg_{\nG}(w_{t-1})-H_G(w_t,w_{t-1})
			> 1,
$$
		so by Lemma \ref{lem:slack}, $\Delta \tbm > 0$.
		\end{proof}

%
%

	\subsection{Phase Two: Seesaw to Twin Broom}

In this section, we discuss Phase Two, which inputs  a seesaw and outputs a twin broom. 


\begin{table}[ht]
\begin{center}
\begin{tabular}{|c|p{2.75in}|@{}c@{}|}

\hline

Surgery &  Initial Conditions & Illustration\\
\hline
$\S_6$ & \small $G$ has exactly one left leaf and  $2r-1 \leq t$ &

\begin{tabular}{c}
\begin{tikzpicture}[scale=0.5,
    decoration={
      markings,
      mark=at position 1 with {\arrow[scale=1.25,black]{latex}};
    }
  ]

\draw[color=white] (2, .5) circle (1pt);

\draw[gray!70] (1,0) -- (2,0);
\draw[gray!70, fill=gray!70] (1,0) circle (4pt);

\draw(4,0) -- (4,-1);
\draw[fill] (4,-1) circle (4pt);

\draw[dashed] (2,0) -- (9,0);

\draw (2,0) -- (2.5,0);
\draw (3.5,0) -- (5.5,0);
\draw (5.5,0) -- (7.5,0);
\draw (8.5,0) -- (9,0);

\foreach \i in  {2,4,9} 
{
\draw[fill] (\i,0) circle (4pt);
}

\draw[fill=white] (6,0) circle (4pt);

\draw[fill=white] (7,0) circle (6pt);
\draw[fill=white] (7,0) circle (4pt);

\draw[bend left=20, postaction=decorate] (3.75, -1.1) to (1.2,-.25);

\node at (4.35,-1.25) {\scriptsize $x$};

\end{tikzpicture}
\end{tabular}

\\

\hline

$\S_7$ & \small $2r-2 \geq t$; $G$ has exactly one left leaf and one right leaf &

\begin{tabular}{c}
\begin{tikzpicture}[scale=0.5,
    decoration={
      markings,
      mark=at position 1 with {\arrow[scale=1.25,black]{latex}};
    }
  ]

\draw[color=white] (2, .5) circle (1pt);

\draw[gray!70] (11,0) -- (12,0);
\draw[gray!70, fill=gray!70] (12,0) circle (4pt);

\draw(9,0) -- (9,-1);
\draw[fill] (9,-1) circle (4pt);

\draw[gray!70] (1,0) -- (2,0);
\draw[gray!70, fill=gray!70] (1,0) circle (4pt);

\draw(4,0) -- (4,-1);
\draw[fill] (4,-1) circle (4pt);

\draw[dashed] (2,0) -- (11,0);

\draw (2,0) -- (2.5,0);
\draw (3.5,0) -- (4.5,0);
\draw (5.5,0) -- (7.5,0);
\draw (8.5,0) -- (9.5,0);
\draw (10.5,0) -- (11,0);

\foreach \i in  {2,4,9,11} 
{
\draw[fill] (\i,0) circle (4pt);
}

\draw[fill=white] (6,0) circle (4pt);

\draw[fill=white] (7,0) circle (6pt);
\draw[fill=white] (7,0) circle (4pt);

\draw[bend left=20, postaction=decorate] (3.75, -1.1) to (1.2,-.25);

\draw[bend right=20, postaction=decorate] (9.25, -1.1) to (11.8,-.25);

\node at (4.35,-1.25) {\scriptsize $x$};
\node at (8.65,-1.25) {\scriptsize $y$};

\end{tikzpicture}
\end{tabular}

\\

\hline

$\S_8$ & \small $G$ has exactly one right leaf and no left leaves &

\begin{tabular}{c}
\begin{tikzpicture}[scale=0.5,
    decoration={
      markings,
      mark=at position 1 with {\arrow[scale=1.25,black]{latex}};
    }
  ]

\draw[color=white] (2, .5) circle (1pt);

\draw[gray!70] (3,0) -- (3,-1);
\draw[gray!70, fill=gray!70] (3,-1) circle (4pt);

\draw[dashed] (0,0) -- (7,0);

\draw (0,0) -- (.5,0);
\draw (1.5,0) -- (3.5,0);
\draw (4.5,0) -- (5.5,0);
\draw (6.5,0) -- (7,0);

\foreach \i in  {0,5,7} 
{
\draw[fill] (\i,0) circle (4pt);
}

\draw (5,0) -- (5,-1);
\draw[fill] (5,-1) circle (4pt);

\node at (5.35,-1.25) {\scriptsize $x$};

\draw[bend left=20, postaction=decorate] (4.75, -1.25) to (3.25,-1.25);

\draw[fill=white] (2,0) circle (4pt);

\draw[fill=white] (3,0) circle (6pt);
\draw[fill=white] (3,0) circle (4pt);

\end{tikzpicture}

\end{tabular}

\\

\hline
\end{tabular}
\hspace{-1in}\caption{The Phase Two tree surgeries.  The tree $G$ is a bifocal seesaw.}
\label{table:phase2}
\end{center}
\end{table}

\begin{figure}[t]
\begin{center}
\includegraphics[width=4.5in]{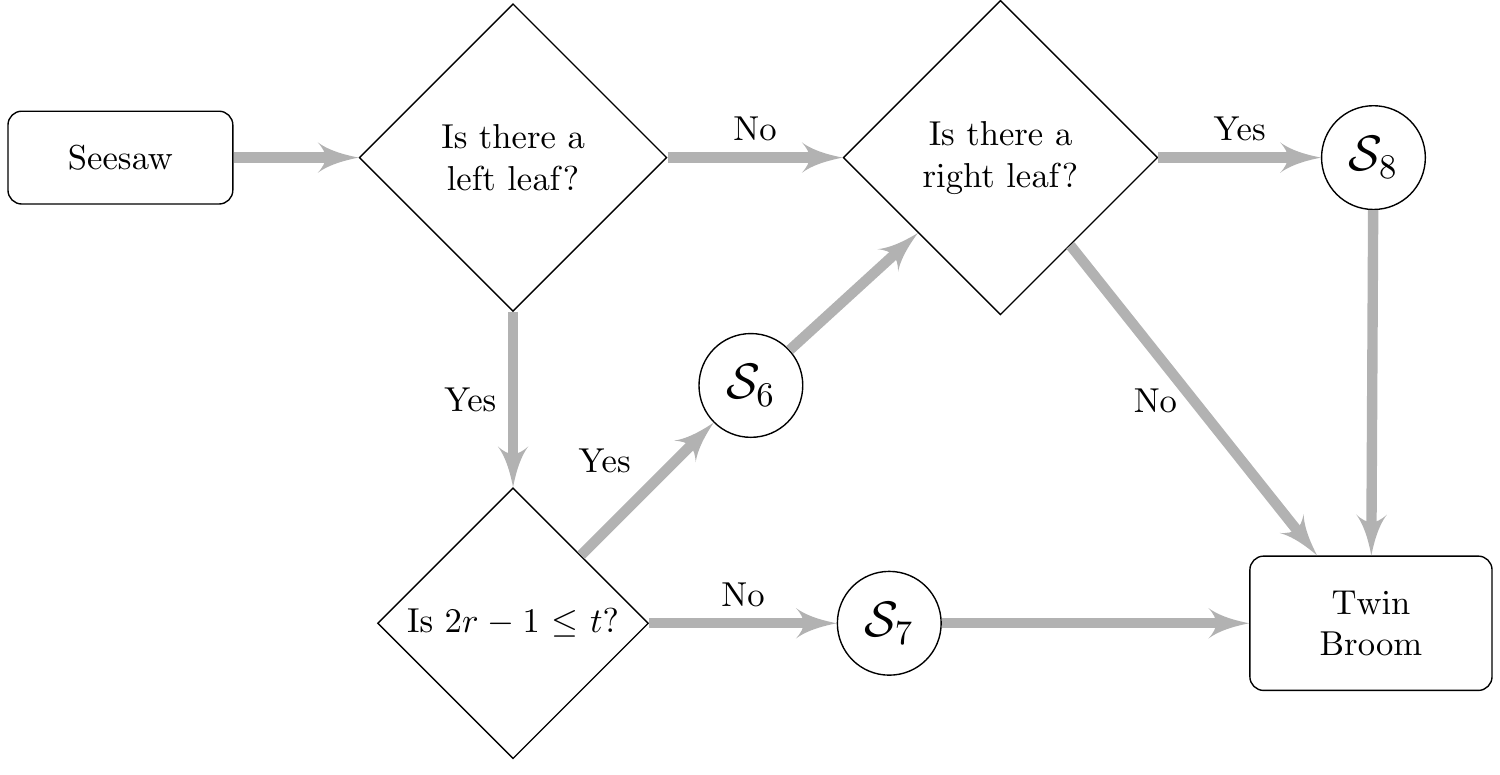}
\end{center}
\caption{Phase Two of the algorithm, turning a seesaw into a twin broom.}
\label{fig:phase2}
\end{figure}

\begin{lemma} \label{lemma:seesaw-to-twinbroom}
Let $G$ be a seesaw on $n$ vertices. Phase Two creates  a twin broom $\nG$ such that $\tbm(\nG)\geq\tbm(G)$.
\end{lemma}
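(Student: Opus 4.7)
The plan is to show that, starting from a seesaw $G$ with at most one non-focal leaf on each side of the spine and possibly many leaves attached to the two foci $w_{r-1}$ and $w_r$, we can repeatedly apply the three surgeries $\S_6$, $\S_7$, $\S_8$ of Table~\ref{table:phase2} until no non-focal leaves remain. Each of these surgeries transplants a non-focal leaf either toward the end of its side of the spine (thereby extending the spine) or inward to one of the foci, so the number of non-focal leaves weakly decreases at every step while the overall spine can only grow up to length $n-|U_{r-1}\cup U_r|$. This bounds the process and guarantees termination at a caterpillar with $V=W\cup U_{r-1}\cup U_r$, i.e.\ a twin broom.

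For each individual surgery I would follow exactly the methodology developed in Phase One. First, compute the hitting-time changes $\Delta L(w_r), \Delta R(w_r), \Delta L(w_{r-1}), \Delta R(w_{r-1})$ directly from equations \eqref{eq:leftleft2}--\eqref{eq:rightright2} (noting that $\S_6$ and $\S_7$ lengthen the spine and possibly relocate the $w_r$-pessimal vertex, while $\S_8$ merely moves one leaf inward). Use Lemma~\ref{lem:criteria} or Corollary~\ref{cor:criteria} to check that $\nG$ is again bifocal with best mix focus $w_r$, possibly with the second focus shifted by one. Then invoke Lemma~\ref{lem:slack} with $A$ equal to the (at most two) transplanted leaves and $B$ a small set of nearby spinal vertices (for $\S_6$ and $\S_8$) or essentially the whole spine (for $\S_7$, which changes hitting times from both sides). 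In each case, the slack inequality \eqref{eqn:slack} reduces after a short calculation to a comparison between a quantity like $H_G(w_{i-1},w_i)$ or a constant such as $1$, and a dominant linear combination of spine-edge hitting times and degree sums, which is positive because the relevant $H_G(w_i,w_{i+1})$ grow linearly in the spine length.

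The main obstacle, analogous to $\S_5$ in Phase One, will be the critical switch between $\S_6$ and $\S_7$ governed by the conditions $2r-1\leq t$ versus $2r-2\geq t$. When $\S_6$ is applied repeatedly while pushing the single left leaf outward, the left spine lengthens, so $\L(w_r)$ climbs relative to $\R(w_r)$; once $2r-1\leq t$ fails, continuing to use $\S_6$ would flip the dominant pessimal side. The threshold $2r-2\geq t$ is precisely what is needed so that $\S_7$, which extends both sides simultaneously, keeps the best mix focus fixed at $w_r$ (via a careful check of Lemma~\ref{lem:criteria}). Similarly, $\S_8$ is reserved for the final cleanup when the left spine is already leaf-free, so that moving the lone right leaf to $U_r$ cannot create a tie that shifts the best mix focus. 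Verifying that $\tbm$ does not decrease at these transitions, by arguing that the $\pdelt(w_r)$-contribution from the newly lengthened side dominates the small negative contribution of the transplanted leaf in the slack inequality, is where the bookkeeping is heaviest but conceptually identical to the Phase One proofs.

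Finally, I would note that when the seesaw $G$ already satisfies $V=W\cup U_{r-1}\cup U_r$, no surgery is needed and $\nG=G$, so the statement holds trivially. Combining the monotonicity of $\tbm$ at each surgery with the termination argument yields the desired caterpillar $\nG$ which is a twin broom with $\tbm(\nG)\geq \tbm(G)$.
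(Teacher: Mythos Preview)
Your overall strategy matches the paper's: use the three Phase~Two surgeries, verify that the foci and best mix focus are preserved via Lemma~\ref{lem:criteria} or Corollary~\ref{cor:criteria}, and then confirm $\Delta\tbm\geq 0$ via Lemma~\ref{lem:slack} or Corollary~\ref{cor:slack}. However, you have misread the surgeries as iterative elementary transplants. In the paper each of $\S_6=\tr{i}{1}$, $\S_7=\tr{i}{1}\wedge\tr{j}{t}$, and $\S_8=\tr{i}{r}$ is a single long-distance transplant, and since a seesaw has at most one non-focal leaf on each side, Phase~Two consists of \emph{at most two surgeries total}: either $\S_6$ or $\S_7$ for the left leaf (the choice governed once by whether $2r-1\leq t$), followed possibly by $\S_8$ for the remaining right leaf. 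There is no repeated ``pushing outward'' of a leaf, no gradual climb of $\L(w_r)$ relative to $\R(w_r)$, and no shift of the second focus; the foci remain $w_{r-1},w_r$ throughout. Your termination argument is therefore unnecessary, and the ``critical switch'' you describe does not occur dynamically.

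This misreading also affects your slack bookkeeping. Because $\S_6$ moves the leaf from $U_i$ all the way to the new left endpoint in one step, $\pdelt(w_r)=\Delta\L(w_r)=2i-1$ and $\Delta H(x,w_r)-\pdelt(w_r)=i^2-2i$, which is quadratic in $i$. A ``small set of nearby spinal vertices'' cannot absorb this; the paper takes $B=\{w_{r-1},\ldots,w_{t-1}\}$, the entire right half of the spine, and uses the hypothesis $2r-1\leq t$ to get enough terms. Likewise for $\S_7$ the paper takes $B=W$. Conversely, $\S_8$ is even easier than you suggest: since every $\Delta H(v,w_r)\leq 0=\Delta\L(w_r)$, Corollary~\ref{cor:slack} applies directly with $A=\emptyset$.
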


\begin{proof}
If $G$ is already a twin broom then $\nG=G$. The three surgery types $\S_6, \S_7, \S_8$ employed in Phase Two are shown in
Table \ref{table:phase2} and  the workflow is shown in Figure \ref{fig:phase2}. 
The lemmas that follow show that $\Delta \Tbest \geq 0$ for all three surgeries.

Suppose that $G$ has a left leaf. If   $2r-1 \leq t$ then we use $\S_6$ to transplant the left leaf onto the end of the left spine. If $2r-2 \geq t$, or equivalently $r-2 \geq t-r$, then there must also be a right leaf because $w_r$ is the best mix focus (see Lemma \ref{lemma:focuscat}(c)). In this case, we use $S_7$ to simultaneously transplant both of these leaves to the spine. Next, if $G$ still has a right leaf (perhaps we just applied $\S_6$), then we apply $\S_8$ to move this leaf to the end of the right spine. The resulting graph is a twin broom with best mix focus $w_r$ and second focus $w_{r-1}, w_r$.
\end{proof}


First, we prove that $\S_6$ moves the final left leaf to the spine while also increasing $\Tbest$.

%
%
\begin{lemma} \label{lem:S6}
Let $G$ be a seesaw with $2r-1 \leq t$  and  exactly one left leaf  $x\in U_i$, where $2 \leq i \leq r-2$. 
If $\S_6=\tr{i}{1}$ then $\Delta \tbm > 0$.
\end{lemma}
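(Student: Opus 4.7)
The plan is to follow the standard two-step template from Section~4.2: first apply Lemma~\ref{lem:criteria} to verify that $w_r$ remains the best mix focus in $\nG = \S_6(G)$ (with $w_{r-1}$ still the second focus), then apply Lemma~\ref{lem:slack} to conclude $\Delta \tbm > 0$. The surgery $\S_6 = \tr{i}{1}$ relocates the lone left leaf $x$ to become a new leftmost spine endpoint, so intuitively the tree becomes more path-like on the left and $\tbm$ ought to grow.

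For the focus-preservation step, equation \eqref{eq:leftleft2} with $j=1$, together with the extra unit step from the newly placed leaf $x$ into $w_1$, yields $\Delta\L(w_r) = \Delta\L(w_{r-1}) = 2i-1$ and $\Delta\R(w_r) = \Delta\R(w_{r-1}) = 0$ (the right side is untouched). Condition \eqref{eq:lchange} is immediate from $L_G(w_r)-R_G(w_r) \geq 1$. The substantive content is condition \eqref{eq:newtbmfoc}, which demands $R_G(w_r) - L_G(w_{r-1}) \geq 2i-1$. A telescoping computation using \eqref{eq:adjhtime} gives
\[
L_G(w_{r-1}) = (i-1)^2 + (r-1-i)(r+i-1) = (r-1)^2 - (2i-1),
\]
while $R_G(w_r) \geq (t-r)^2$. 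The hypothesis $2r - 1 \leq t$ supplies $t - r \geq r-1$, so $R_G(w_r) \geq (r-1)^2$, and subtraction delivers the desired bound. This is the only place the hypothesis is consumed, and it is tight exactly when the right spine has no extra non-spinal leaves.

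Now $x$ is the $w_r$-pessimal vertex in $\nG$, so $\pdelt(w_r) = \Delta\L(w_r) = 2i-1$. A second application of \eqref{eq:leftleft2} yields $\Delta H(x,w_r) = (i-1)^2 + 2(i-1) = i^2 - 1$, $\Delta H(w_k,w_r) = 2(i-k)$ for $1 \leq k \leq i-1$, and $\Delta H(v,w_r) = 0$ for every other $v$. Hence $A = \{x\}$ is the only candidate for strict excess (when $i=2$ we have equality $\Delta H(x,w_r) = \pdelt(w_r)$, and including $x$ in $A$ either way yields the same conclusion). I take $B = \{w_1, w_2, \ldots, w_i\}$, which contains the unique vertex $w_1$ of strictly increased degree; every $v \in \overline{A}\setminus B$ satisfies $g(v) \geq 0$. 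The left-hand side of \eqref{eqn:slack} evaluates to $i(i-2)$. For the right-hand side, I sum the spine contributions and invoke $L_G(w_r) - H_G(w_i,w_r) = (i-1)^2$ to collapse the $k=1$ and $k=i$ terms; the total simplifies to $i^2 + 2i - 1$. The excess $4i - 1 > 0$ forces $\Delta \tbm > 0$ via Lemma~\ref{lem:slack}.

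The main obstacle I anticipate is the verification of \eqref{eq:newtbmfoc}: the hypothesis $2r - 1 \leq t$ is exactly the threshold that keeps the best mix focus pinned at $w_r$ after the left spine is extended, which explains why $\S_6$ is used only in this regime and the complementary case is handled by $\S_7$. The subsequent slack-sum computation is then routine but requires careful bookkeeping of the degree changes at $w_1$ and $w_i$ alongside the uniform $+2$ shift of all hitting times from the segment $w_1, \ldots, w_{i-1}$ to $w_r$.
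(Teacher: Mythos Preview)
Your proof is correct and follows the same two-step template as the paper (focus preservation, then Lemma~\ref{lem:slack}), but with different tactical choices at each step. For focus preservation, the paper applies Lemma~\ref{lemma:focuscat}(d) directly in $\nG$, noting that after the surgery the left spine is a clean path, so $L_{\nG}(w_{r-1}) = (r-1)^2 \leq (t-r)^2 \leq R_{\nG}(w_r)$; you instead verify the hypotheses of Lemma~\ref{lem:criteria} in $G$, which requires the explicit computation $L_G(w_{r-1}) = (r-1)^2 - (2i-1)$. Both routes consume the hypothesis $2r-1 \leq t$ identically.

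The more substantive difference is your choice of $B$ in Lemma~\ref{lem:slack}. The paper takes $B = \{w_{r-1}, \ldots, w_{t-1}\}$, where each vertex contributes a uniform $2(2i-1)$, and then uses $t-r \geq r-2 \geq i$ to bound the total. Your choice $B = \{w_1, \ldots, w_i\}$ is more localized to the site of the surgery and has the virtue of containing both vertices $w_1, w_i$ whose degree changes; this makes the requirement $C \subset B$ of Lemma~\ref{lem:slack} transparently satisfied (the paper's $B$ omits $w_1 \in C$, so its application of the lemma as stated is not quite clean). Your telescoping via $H_G(w_1,w_r) - H_G(w_i,w_r) = (i-1)^2$ yields the closed form $i^2+2i-1$ without a second appeal to $2r-1 \leq t$.
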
 
	
	\begin{proof}
The surgery extends the left-hand spine, so $L(w_{r-1}) = H_{\nG} (x,w_{r-1}) = H_{\nG} (x,w_1) + H_{\nG} (w_1, w_{r-1}) = 1 + H_G(w_1 + w_{r-1}) + 2(i-1)$ by equation \eqref{eq:leftleft2}. Therefore $\Delta\L(w_{r-1})=2i-1$, and likewise $\Delta\L(w_r)=2i-1,$ while   $\Delta\R(w_{r-1})=0=\Delta\R(w_r).$ 

We  show that the foci have not changed using Lemma \ref{lemma:focuscat} (d).
We have $\L_{\nG}(w_r) > \L_G(w_r) \geq R_G(w_r) = R_{\nG}(w_r)$.   Equation
\eqref{eq:htimepath2} and the assumption  $2r-1 \leq  t$  yield
 $
L_{\nG}(w_{r-1}) = (r-1)^2 \leq (t-r)^2 \leq R_{\nG}(w_r).
$
Therefore, $\nG$ has focus $w_{r-1}$ and best mix focus $w_r$.

Next, we use Lemma \ref{lem:slack} with $A=\{x\}$. The left-hand side of inequality \eqref{eqn:slack} is
$$
\Delta H(x,w_r)-\Delta\L(w_r) =(i^2-1)-(2i-1)=i^2-2i.
$$
Let $B=\{w_{r-1},w_r,\dots,w_{t-1}\}$. For all $v \in B$, we have $\deg_{\nG}(v) \geq 2$ and $\Delta H(v,w_r)=0=\Delta\deg(v)$. The right-hand side of inequality \eqref{eqn:slack} is 
$$
		\sum_{v\in B}{2(2i-1)}
		=2(2i-1)(t-r)
		\geq 2(2i-1)(r-2)
		> i(i-2 )=i^2-2i.
$$
Lemma \ref{lem:slack} gives $\Delta \tbm > 0$.
	\end{proof}

%
%
We employ surgergy $S_7$ is the particular case where $2r-2 \geq t$ and $G$ has both a left leaf and a right leaf.
This surgery  removes both leaves simultaneously. 
\begin{lemma}
Let $G$ be a seesaw with exactly one left leaf $x\in U_i$ and exactly one right leaf $y\in U_j$, such that 
 $2r-2 \geq t$.
If $\S_7=\tr{i}{1}\wedge\tr{j}{t}$, then $\Delta \tbm > 0$. 
\end{lemma}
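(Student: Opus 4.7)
The plan is to follow the template of Lemma~\ref{lem:S6}: compute the hitting-time and degree changes induced by $\S_7$, use Lemma~\ref{lemma:focuscat}(d) to verify that $\nG$ is bifocal with foci $w_{r-1}, w_r$ and best mix focus $w_r$, and then apply Lemma~\ref{lem:slack} to conclude $\Delta \tbm > 0$. Since $\tr{i}{1}$ and $\tr{j}{t}$ relocate leaves on opposite sides of $w_r$, their contributions to hitting times into $w_r$ or $w_{r-1}$ are additive and supported on disjoint vertex subsets. The argument of Lemma~\ref{lem:S6} therefore gives $\Delta \L(w_r) = \Delta \L(w_{r-1}) = 2i - 1$ from the left move, and the symmetric computation on the right yields $\Delta \R(w_r) = \Delta \R(w_{r-1}) = 2(t-j) + 1$. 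Because left pessimality persists in $\nG$, the $w_r$-pessimal vertex stays on the left end and $\pdelt(w_r) = 2i - 1$.

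Next, to apply Lemma~\ref{lemma:focuscat}(d), one verifies $L_G(w_r) - R_G(w_r) > 2(t-j) - 2i + 2$ together with $L_G(w_{r-1}) - R_G(w_r) \leq 2(t-j) - 2i + 2$. Both reduce to elementary comparisons after expanding the seesaw hitting times via \eqref{eq:adjhtime}, using the ranges $2 \leq i \leq r-2$ and $r+1 \leq j \leq t-1$, the input conditions $L_G(w_r) > R_G(w_r)$ and $L_G(w_{r-1}) \leq R_G(w_r)$, and the hypothesis $2r - 2 \geq t$; the assumption that the left spine is at least as long as the right spine is precisely what makes the second inequality go through.

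For Lemma~\ref{lem:slack}, the computation of Lemma~\ref{lem:S6} yields $\Delta H(x, w_r) = i^2 - 1$, and the mirror computation on the right produces $\Delta H(y, w_r) = (t-j)(t-j+2)$ together with $\Delta H(w_k, w_r) = 2(t-k)$ for $j < k \leq t$. I set $A$ to consist of $x$ together with any of $y, w_t, w_{t-1}, \ldots$ that exceed the threshold $\pdelt(w_r) = 2i - 1$. For $B$ I take $\{w_{r-1}, w_r\}$ together with all interior spine vertices outside $A$; each such vertex contributes at least $2(2i-1)$ to the right-hand side of \eqref{eqn:slack}, because $\deg_{\nG}(v) \geq 2$ and $\Delta H(v, w_r) = 0 = \Delta \deg(v)$ there. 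The condition $2r - 2 \geq t$ guarantees that the left interior spine supplies enough such vertices (roughly $r - 2$ of them) to absorb both the $x$-deficit $i^2 - 2i$ and whatever right-side deficit is contributed by members of $A$.

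The main obstacle is the joint book-keeping when both $x$ and $y$ (together with one or more of $w_t, w_{t-1}, \ldots$) enter $A$ simultaneously: unlike the single-vertex templates of Lemmas~\ref{lem:S5} and~\ref{lem:S6}, we must balance two independent deficits against a shared slack budget. Additivity of the two moves, combined with the structural asymmetry forced by $2r - 2 \geq t$, is what keeps the estimate tractable.
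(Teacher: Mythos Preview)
Your outline has the right shape, but it skips the one structural observation that makes the whole argument work: the hypotheses force $t = 2r-2$ exactly, and consequently $i-1 \ge t-j$. The paper establishes this first. From $L_G(w_{r-1}) \le R_G(w_r)$ together with the explicit seesaw hitting times one gets $r-2 \le t-r$; combined with your hypothesis $2r-2 \ge t$ this yields $t = 2r-2$. Once the two spines have equal length, the comparison $L_G(w_{r-1}) \le R_G(w_r)$ further forces $i-1 \ge t-j$. With these in hand, after $\S_7$ the caterpillar $\nG$ is a twin broom with leaf-free spines of equal length $r-1$, so $L_{\nG}(w_{r-1}) = (r-1)^2 = R_{\nG}(w_r)$ and Lemma~\ref{lemma:focuscat}(d) is immediate. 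Your second displayed inequality $L_{\nG}(w_{r-1}) \le R_{\nG}(w_r)$ \emph{is} the statement $(r-1)^2 \le (t-r+1)^2$, i.e.\ $2r-2 \le t$; so your ``elementary comparison'' is precisely the derivation of $t=2r-2$, which you should make explicit rather than hide.

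The omission then propagates into your slack argument, where it causes real trouble. Because $i-1 \ge t-j$, every spine vertex satisfies $\Delta H(w_k,w_r) \le 2\max(i-1,\,t-j) = 2(i-1) < 2i-1 = \pdelt(w_r)$, so no $w_k$ ever enters $A$; your worry about ``$w_t, w_{t-1},\ldots$'' joining $A$ is unfounded, and the ``joint book-keeping obstacle'' you flag simply does not arise. Moreover, your claim that all interior spine vertices in $B$ have $\Delta H(v,w_r)=0$ is false: vertices $w_k$ with $k<i$ or $k>j$ have strictly positive $\Delta H(w_k,w_r)$. The paper instead takes $A=\{x,y\}$, bounds the left side of \eqref{eqn:slack} by $2(i^2-2i)$ using $t+1-j \le i$, and takes $B=W$ to obtain a lower bound exceeding $2i^2-4i$ for the right side. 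Your plan can be salvaged, but only by first proving $t=2r-2$, at which point it collapses into the paper's argument.
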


\begin{proof}
First, we show that in fact, $2r-2 = t$, meaning that the left and right spines are the same length.
If $r-2 > t-r$ then $L(w_{r-1}) = H(w_1, w_{r-1}) > (r-2)^2 \geq (t-r+1)^2 > H(w_t, w_r) = R(w_r)$, which contradicts Lemma \ref{lemma:focuscat}(c) because $w_r$ is a best mix focus. Next, we observe that $i-1 \geq t-j$, meaning that $i$ is further from the left endpoint than $j$ is from the right endpoint. Indeed, when the left and right spines are the same length, this is necessary for $L(w_{r-1}) \leq R(w_r)$.

	By equations \eqref{eq:leftleft2} and \eqref{eq:rightright2}, we have
$\Delta\L(w_{r-1})=2i-1 = \Delta\L(w_r)$ and
$\Delta\R(w_{r-1})=2(t-j)+1=\Delta\R(w_r)$.
After the surgery, the left and right spines are equal length and leaf-free, so the conditions of Lemma \ref{lemma:focuscat}(d) 
  hold for $\nG$. In fact, both $w_{r-1}, w_r$ are now best mix foci because $L_{\nG}(w_{r-1}) = R_{\nG}(w_r)$.
 To prove that $\tbm$ increases, we use  Lemma \ref{lem:slack}. Taking $A=\{x,y\}$, the left hand side of inequality \eqref{eqn:slack}
 is
$$
 \big( i^2-1-(2i-1) \big) + \big((t+1-j)^2-1-(2i-1) \big) \leq 2  \big( i^2-1-(2i-1) \big)  = 2i^2 - 4i.
$$
For  the right hand side of inequality \eqref{eqn:slack}, we take $B=W=\{ w_1, w_2, \ldots , w_t \}$, in other words, we  ignore any central leaves. This right hand side is at least
\begin{align*}
	\MoveEqLeft  2t \, \pdelt(w_r) -2 \sum_{k=1}^t  \Delta H(w_i, w_r) \\
	& \qquad
	-H_G(w_1, w_r) + H_G(w_i, w_r) + H_G(w_j, w_r) - H_G(w_t,w_r) \\ 
& = 2t (2i-1) -4 \sum_{k=1}^{i-1}  (i-k) -4 \sum_{k=j+1}^{t}  (k-j)
	- (i-1)^2- (t-j)^2 \\ 	
& \geq 2t (2i-1) - 4 i(i-1)  - 2 ( i-1)^2  \, =\, 4(r-1)(2i-1) - 6i^2  + 8i -2 \\
& > 4(i-1)(2i-1) - 6i^2  + 8i -2 \, =\,  2i^2 - 4i +2.
\end{align*}
We have satisfied the conditions of Lemma \ref{lem:slack}, so $\Delta \tbm > 0$.
\end{proof}

%
%

Finally, we consider surgery $\S_8$, which moves the final right leaf to the spine.

	\begin{lemma}
	Let $G$ be a seesaw with exactly one right leaf  $x \in U_i$, where $r+1 \leq i \leq t-1$ and no left leaves. If $\S_8=\tr{i}{r}$, then $\Delta \tbm > 0$.
		\end{lemma}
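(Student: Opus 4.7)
The plan is to apply Lemma \ref{lem:slack} with $A=\emptyset$ after first verifying via Lemma \ref{lem:criteria} that $w_r$ remains the best mix focus of $\nG$. Since the transplant $\S_8=\tr{i}{r}$ acts only on the right of $w_r$, we have $\Delta\L(w_{r-1})=\Delta\L(w_r)=0$, while a direct application of equation \eqref{eq:adjhtime} edge by edge gives $\Delta\R(w_{r-1})=\Delta\R(w_r)=-2(i-r)$. The first criterion \eqref{eq:lchange} is then automatic. For the second criterion \eqref{eq:newtbmfoc} I need $\R_G(w_r)-\L_G(w_{r-1})\geq 2(i-r)$. Because the left spine has no extra leaves, summing \eqref{eq:adjhtime} yields $\L_G(w_{r-1})=(r-2)^2$; accounting for the single right leaf at $w_i$ gives $\R_G(w_r)=(t-r)^2+2(i-r)$. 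The desired inequality then reduces to $t\geq 2r-2$, which must hold because $w_r$ is a best mix focus of $G$: otherwise $i\leq t-1$ would force $\R_G(w_r)\leq (t-r)^2+2(t-1-r)<(r-2)^2=\L_G(w_{r-1})$, contradicting Lemma \ref{lemma:focuscat}(c). Thus $\nG$ has foci $w_{r-1}$ and $w_r$, with $w_r$ the best mix focus, and $\pdelt(w_r)=\Delta\L(w_r)=0$.

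Next I observe that $\Delta H(v,w_r)\leq 0$ for every $v\in V$: hitting times to $w_r$ from vertices in $V_1\cup\cdots\cup V_r$ are unchanged, hitting times from $v\in V_k$ with $r<k\leq t$ strictly decrease by equation \eqref{eq:rightleft2}, and the moved leaf satisfies $\Delta H(x,w_r)=-H_G(w_i,w_r)<0$. Corollary \ref{cor:slack} therefore already yields $\Delta\tbm\geq 0$. To sharpen this to strict inequality, I apply Lemma \ref{lem:slack} with $A=\emptyset$ and $B=\{w_r,w_i\}$. The only vertex with positive $\Delta\pi$ is $w_r$ (since $\Delta\deg(w_r)=+1$ and $\Delta\deg(w_i)=-1$ are the only degree changes), so $C\subseteq B$. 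The contribution of $w_r$ to the right-hand side of \eqref{eqn:slack} is $0$, while the contribution of $w_i$ is $2\deg_{\nG}(w_i)(i-r)+H_G(w_i,w_r)>0$, using $\Delta H(w_i,w_r)=-2(i-r)$ and $\Delta\deg(w_i)=-1$. The left-hand side of \eqref{eqn:slack} equals $0$, so \eqref{eqn:slack} holds strictly, giving $\Delta\tbm>0$.

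The main obstacle is the structural fact $t\geq 2r-2$, which must be pulled out of the hypothesis that $w_r$ is a best mix focus of $G$ before Lemma \ref{lem:criteria} can be invoked. Once that step is settled, everything else is routine slack-lemma bookkeeping with a very small test set $B$.
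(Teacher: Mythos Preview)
Your argument is correct and follows essentially the same route as the paper: compute $\Delta\L$ and $\Delta\R$, use the best-mix-focus hypothesis via Lemma~\ref{lemma:focuscat}(c) to deduce $t\geq 2r-2$, feed this into Lemma~\ref{lem:criteria}, and then observe $\Delta H(v,w_r)\leq 0=\pdelt(w_r)$ for all $v$. The paper stops at Corollary~\ref{cor:slack} and simply asserts $\Delta\tbm>0$, whereas you take the extra care of applying Lemma~\ref{lem:slack} with $A=\emptyset$ and $B=\{w_r,w_i\}$ to genuinely earn the strict inequality; this is a small but real improvement in rigor over the paper's write-up.
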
 
		
\begin{proof}		
First, we use Lemma \ref{lem:criteria}, to show that $\nG$ has focus $w_{r-1}$ and best mix focus $w_r$. 
By equation \eqref{eq:rightleft2},
$\Delta\L(w_{r-1})=0 =\Delta\L(w_r)=\Delta H(w_1,w_r)$ and
$\Delta\R(w_{r-1})=-2(i-r)=\Delta\R(w_r)$.
Inequality \eqref{eq:lchange} is clearly satisfied. Next, we verify inequality \eqref{eq:newtbmfoc}.
By equation \eqref{eq:htimepath2}, we have
$
		R_G(w_r)-L_G(w_{r-1})=(t-r)^2+2(i-r)-(r-2)^2
$
and we claim that $(t-r)^2 - (r-2)^2 >0$. Indeed, since $w_r$ is a best mix focus, Lemma \ref{lem:criteria} (c) ensures that 
$L(w_{r-1}) \leq R(w_r)$, or in other words 
$(r-2)^2 \leq (t-r)^2 + 2(r-i) < (t-r+1)^2.$ Since $r-2$ and $t-r$ are both integers, we must have $r-2 \leq t-r$. This means that $R_G(w_r)-L_G(w_{r-1}) \geq 2(i-r) = \Delta\L(w_{r-1})-\Delta\R(w_r).$ This confirms that the foci do not change.
Finally, for all $v\in V$, we have $\Delta H(v,w_r)\leq 0=\L(w_r)$. By Corollary \ref{cor:slack}, $\Delta \tbm > 0$.
\end{proof}

%
%
\subsection{Phase Three: Twin Broom to Path or Wishbone}

Phase Three converts a twin broom into either $P_n$ or $Y_n$. The three surgeries are shown in
Table \ref{table:phase3} and  the workflow is shown in Figure \ref{fig:phase3}.

\begin{lemma} \label{lemma:twinbroom-to-end}
Let $G$ be a twin broom on $n$ vertices. If $n$ is even then Phase Three turns $G$ into the path $P_n$. If $n$ is odd, then Phase Three turns $G$ into the wishbone $Y_n$. Moreover, if $n$ is even then $\Tbest(G) \leq \Tbest(P_n)$ and if $n$ is odd then
$\Tbest(G) \leq \Tbest(Y_n)$. Furthmore, equality holds if and only if $G=P_n$ for $n$ even, and $G=Y_n$ for $n$ odd.
\end{lemma}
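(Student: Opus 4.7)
My plan is to iteratively strip leaves from the focal clusters $U_{r-1}$ and $U_r$ of the twin broom by transplanting them onto the endpoints of the spine, producing a sequence of ever-thinner twin brooms whose $\Tbest$ is weakly (and, except at the end, strictly) increasing, and terminating at $P_n$ when $n$ is even and at $Y_n$ when $n$ is odd.

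Let $a=|U_{r-1}|$ and $b=|U_r|$, so that $n=t+a+b$. A short computation using equation \eqref{eq:adjhtime} gives
\[
L(w_r)=(r-1)^2+2a,\quad R(w_r)=(t-r)^2,\quad L(w_{r-1})=(r-2)^2,\quad R(w_{r-1})=(t-r+1)^2+2b,
\]
so that by Lemma \ref{lemma:focuscat}(c) the condition that $w_r$ be a best-mix focus reduces to the clean spine inequality $t\geq 2r-2$. The three surgeries I anticipate using are a balanced transplant $\tr{r-1}{1}\wedge\tr{r}{t}$, a left transplant $\tr{r-1}{1}$, and a right transplant $\tr{r}{t}$. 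In each case, after relabeling indices, the new parameters $(t',r',a',b')$ still satisfy $t'\geq 2r'-2$, so Lemma \ref{lemma:focuscat} guarantees that the resulting caterpillar is again a twin broom with foci $\{w_{r'-1},w_{r'}\}$ and best-mix focus $w_{r'}$.

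For each surgery I intend to establish $\Delta\Tbest\geq 0$ via Lemma \ref{lem:slack}, taking $A$ to be the one or two transplanted leaves and $B=W$ the full spine. The left side of \eqref{eqn:slack} accumulates the mixing cost incurred by a leaf becoming pessimal, which is of order $r^2$ or $(t-r)^2$; the right side collects a per-spine-vertex savings of order $\Delta L(w_r)=\Theta(r)$ summed over the $\Theta(n)$ spine vertices, which dominates because in any twin broom $t$ is comparable to $n$. I expect these inequalities to be strict whenever the surgery is nontrivially applicable, which forces the equality characterization: if $\Tbest(G)$ equals the stated upper bound in Theorem \ref{thm:bestmix}, then no legal surgery applies to $G$, so either $a=b=0$ (and $G=P_n$, even $n$) or $a+b=1$ (and $G=Y_n$, odd $n$).

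Termination proceeds by applying the balanced surgery as long as $\min(a,b)\geq 1$, then clearing the remaining cluster one leaf at a time. Because $t+a+b=n$ is invariant and the balanced surgery changes $a+b$ by $-2$ while the single transplants change it by $-1$, the sequence can be steered so that the final caterpillar has $a+b=0$ (and hence equals $P_n$) when $n$ is even and $a+b=1$ (and hence equals $Y_n$) when $n$ is odd. The main obstacle I foresee is the slack bookkeeping in the balanced surgery, where two leaves simultaneously contribute on the left of \eqref{eqn:slack} and the savings must be carefully summed over both halves of the extended spine, together with the boundary case $t=2r-2$ where a single-side transplant threatens to flip the best-mix focus onto $w_{r-1}$; there I will invoke Lemma \ref{lem:criteria} with the foci relabeled, mirroring the delicate handling used for surgery $\S_5$ in Phase One.
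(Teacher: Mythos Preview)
Your overall architecture is right, and your left transplant $\tr{r-1}{1}$ and balanced transplant $\tr{r-1}{1}\wedge\tr{r}{t}$ are exactly the paper's $\S_9$ and $\S_{10}$. But the third surgery you propose, the single right transplant $\tr{r}{t}$, is where the argument breaks, and not in the way you anticipate.

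Suppose $a=|U_{r-1}|=0$ (which is exactly the situation in which you would invoke a right transplant) and $t=2r-2$. Then $\hp_G(w_r)=L(w_r)=(r-1)^2$, and after $\tr{r}{t}$ the new graph satisfies $L_{\nG}(w_r)=(r-1)^2=R_{\nG}(w_r)$, so $w_r$ is still the (now unique) focus and $\pdelt(w_r)=0$. However, for every right-spine vertex $w_k$ with $k>r$ we have $\Delta H(w_k,w_r)=2(k-r)>0$, and for the transplanted leaf $\Delta H(x,w_r)=(r-1)^2-1$; nothing on the left side moves at all. Thus in Lemma~\ref{lem:slack} the set $A$ contains the entire right spine while every term on the right of \eqref{eqn:slack} vanishes. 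A direct computation gives $\Delta\Tbest = -\Delta H(\pi,w_r) = -(4r^2-12r+8)/2(n-1)<0$ for $r\ge 3$. So $\tr{r}{t}$ alone strictly \emph{decreases} $\Tbest$; the obstacle is not a focus flip but the fact that the pessimal hitting time does not grow while the average hitting time does.

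The paper's remedy is to replace your right transplant by $\S_{11}=\tr{r}{1}\wedge\tr{r}{t}$: when $U_{r-1}=\emptyset$ and $|U_r|\ge 2$, move \emph{two} leaves from $U_r$, one to each end. The leftward move buys $\Delta L(w_r)=2r-1$, which is what makes \eqref{eqn:slack} go through. Correspondingly, the workflow is different from yours: first apply $\S_9$ (only while $t\ge 2r-1$, so the best-mix focus is preserved) to equalize the spine lengths; once $t=2r-2$, apply $\S_{10}$ until $U_{r-1}=\emptyset$, then $\S_{11}$ until $|U_r|\le 1$. The parity of $n$ then forces termination at $P_n$ or $Y_n$. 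Your ordering (balanced first, then singles) also runs into trouble on the left side: applying $\tr{r-1}{1}$ when $t=2r-2$ pushes you to $L_{\nG}(w_{r-1})>R_{\nG}(w_r)$, flipping the best-mix focus to $w_{r-1}$, and iterating this relabel-and-transplant does not obviously stabilize.
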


\begin{proof}
The Phase Three surgeries  $\S_9, \S_{10}, \S_{11}$  are shown in
Table \ref{table:phase3} and  the workflow is shown in Figure \ref{fig:phase3}. 
The lemmas that follow show that $\Delta \Tbest > 0$ for all three surgeries.

Suppose that $r-2 < t-r$, or equivalently $t < 2r-2$. Since $G$ is bifocal,  there must be enough leaves in $U_{r-1}$ so that $(r-1)^2 + 2|U_{r-1}| = L(w_r) > R(w_r) = (t-r)^2$. We apply $\S_9$ until $r-2 = t-r$. At this point (and henceforth), both $w_{r-1}$ and $w_r$ are best mix foci by Lemma \ref{lemma:focuscat} (d). We may assume that $|U_{r-1}| \leq |U_{r}|$. 
If both $U_{r-1}$ and $U_r$ are nonempty, we use $\S_{10}$ to simultaneously extend the left and right spine by taking one vertex from each of these sets. We repeat this until $U_{r-1} = \emptyset$. Next, if there are multiple leaves remaining in $U_r$, we use $\S_{11}$ to extend both ends of the spine. We repeat $\S_{11}$ until there are 0 or 1 leaves left in $U_r$. At this point, we either have a path $P_n$ or a wishbone $Y_n$.
We started this phase with a twin broom, which is bifocal by definition. Therefore, if $n$ is even we have constructed a path and if $n$ is odd we  have constructed a wishbone.
\end{proof}

\begin{table}[ht]

\begin{center}
\begin{tabular}{|c|p{2.5in}|@{}c@{}|}

\hline

Surgery &  Initial Conditions & Illustration \\
\hline
$\S_9$ & \small $r-2<t-r$, which forces $U_{r-1} \neq 0$ &

\begin{tabular}{c}
\begin{tikzpicture}[scale=0.5,
    decoration={
      markings,
      mark=at position 1 with {\arrow[scale=1.25,black]{latex}};
    }
  ]

\draw[color=white] (2, .5) circle (1pt);

\draw[gray!70] (1,0) -- (2,0);
\draw[gray!70, fill=gray!70] (1,0) circle (4pt);

\draw(4,0) -- (4,-1);
\draw[fill] (4,-1) circle (4pt);

\draw[dashed] (2,0) -- (7,0);

\draw (2,0) -- (2.5,0);
\draw (3.5,0) -- (5.5,0);
\draw (6.5,0) -- (7,0);

\foreach \i in  {2,7} 
{
\draw[fill] (\i,0) circle (4pt);
}

\draw[fill=white] (4,0) circle (4pt);

\draw[fill=white] (5,0) circle (6pt);
\draw[fill=white] (5,0) circle (4pt);

\draw[bend left=20, postaction=decorate] (3.75, -1.1) to (1.2,-.25);

\node at (4,-1.5) {\scriptsize $x$};

\end{tikzpicture}
\end{tabular}

\\

\hline

$\S_{10}$ & \small $r-2=t-r$, and there are leaves $x\in U_{r-1}$ and $y\in U_r$ &
 
 \begin{tabular}{c}
\begin{tikzpicture}[scale=0.5,
    decoration={
      markings,
      mark=at position 1 with {\arrow[scale=1.25,black]{latex}};
    }
  ]

\draw[color=white] (2, .5) circle (1pt);

\draw[gray!70] (7,0) -- (8,0);
\draw[gray!70, fill=gray!70] (8,0) circle (4pt);

\draw(5,0) -- (5,-1);
\draw[fill] (5,-1) circle (4pt);

\draw[gray!70] (1,0) -- (2,0);
\draw[gray!70, fill=gray!70] (1,0) circle (4pt);

\draw(4,0) -- (4,-1);
\draw[fill] (4,-1) circle (4pt);

\draw[dashed] (2,0) -- (7,0);

\draw (2,0) -- (2.5,0);
\draw (3.5,0) -- (5.5,0);
\draw (6.5,0) -- (7,0);

\foreach \i in  {2,7} 
{
\draw[fill] (\i,0) circle (4pt);
}

\draw[fill=white] (4,0) circle (4pt);

\draw[fill=white] (5,0) circle (6pt);
\draw[fill=white] (5,0) circle (4pt);

\draw[bend left=20, postaction=decorate] (3.75, -1.1) to (1.2,-.25);

\draw[bend right=20, postaction=decorate] (5.25, -1.1) to (7.8,-.25);

\node at (4,-1.5) {\scriptsize $x$};
\node at (5,-1.5) {\scriptsize $y$};

\end{tikzpicture}
\end{tabular}

\\

\hline

$\S_{11}$ & \small $r-2=t-r$, $U_{r-1} = \emptyset$  and there are leaves $x,y\in U_r$  &

\begin{tabular}{c}
\begin{tikzpicture}[scale=0.5,
    decoration={
      markings,
      mark=at position 1 with {\arrow[scale=1.25,black]{latex}};
    }
  ]

\draw[color=white] (2, .5) circle (1pt);

\draw[gray!70] (7,0) -- (8,0);
\draw[gray!70, fill=gray!70] (8,0) circle (4pt);

\draw(5,0) -- (5.33,-1);
\draw[fill] (5.33,-1) circle (4pt);

\draw[gray!70] (1,0) -- (2,0);
\draw[gray!70, fill=gray!70] (1,0) circle (4pt);

\draw(5,0) -- (4.67,-1);
\draw[fill] (4.67,-1) circle (4pt);

\draw[dashed] (2,0) -- (7,0);

\draw (2,0) -- (2.5,0);
\draw (3.5,0) -- (5.5,0);
\draw (6.5,0) -- (7,0);

\foreach \i in  {2,7} 
{
\draw[fill] (\i,0) circle (4pt);
}

\draw[fill=white] (4,0) circle (4pt);

\draw[fill=white] (5,0) circle (6pt);
\draw[fill=white] (5,0) circle (4pt);

\draw[bend left=20, postaction=decorate] (4.40, -1.1) to (1.2,-.25);

\draw[bend right=20, postaction=decorate] (5.6, -1.1) to (7.8,-.25);

\node at (4.6,-1.5) {\scriptsize $x$};
\node at (5.4,-1.5) {\scriptsize $y$};

\end{tikzpicture}
\end{tabular}

\\

\hline

\end{tabular}
\end{center}
\caption{The  Phase Three tree surgeries. The graph $G$ is a twin broom.}
\label{table:phase3}
\end{table}


\begin{figure}[t]
\begin{center}
\includegraphics[width=5.5in]{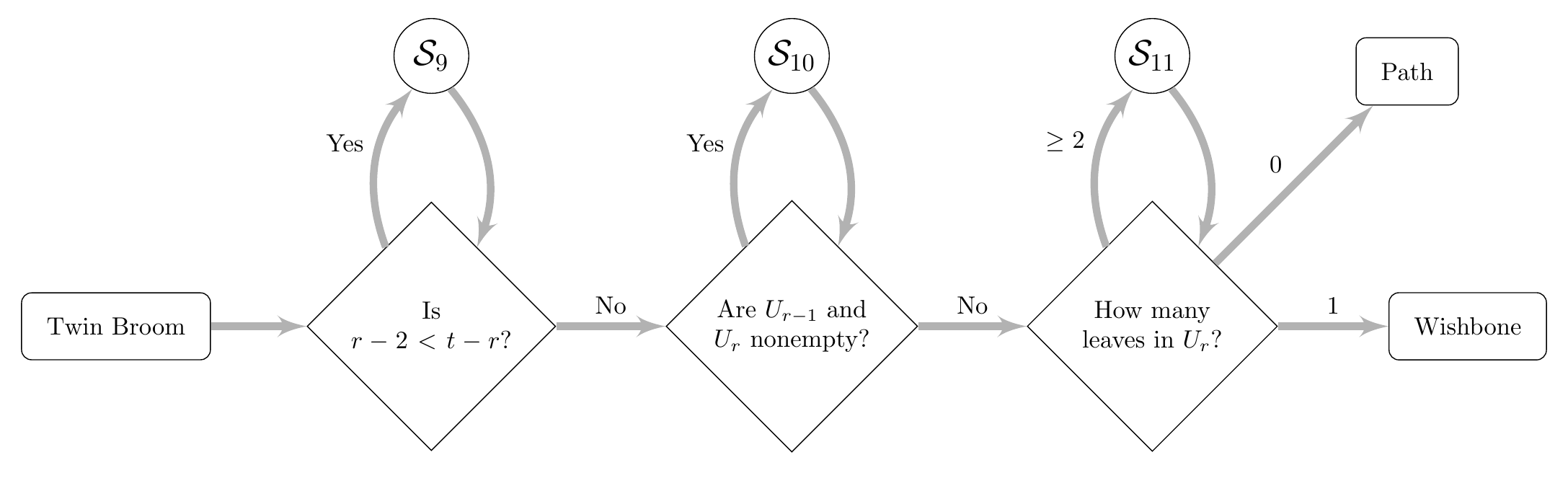}
\end{center}
\caption{Phase three of the algorithm, turning a twin broom into a path or a wishbone.}
\label{fig:phase3}
\end{figure}

%
%

We first consider $\S_9$, which we apply to equalize the lengths of the left and right spine.
	

	\begin{lemma} \label{lem:S9} 
Let $G$ be a twin broom with leaf $x\in U_{r-1}$ such that
$2r-1 \leq t.$
If $\S_9=\tr{{r-1}}{1}$, then $\Delta \Tbest > 0$.
	\end{lemma}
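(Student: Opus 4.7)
The surgery $\S_9 = \tr{r-1}{1}$ relocates a leaf $x \in U_{r-1}$ to become the new left endpoint of the spine, extending it by one edge. My approach follows the standard Phase Three template: verify that the foci of $\nG$ agree with those of $G$, and then apply Lemma \ref{lem:slack} to bound $\Delta H(\pi, w_r)$.

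Using equations \eqref{eq:adjhtime} and \eqref{eq:leftleft2}, I would first record the relevant hitting-time changes: $\Delta \L(w_r) = \Delta \L(w_{r-1}) = 2r - 3$ (both positive, reflecting the lengthened spine; they coincide because $\L(w_r) - \L(w_{r-1}) = H(w_{r-1}, w_r)$ is preserved by the surgery), and $\Delta \R(w_r) = \Delta \R(w_{r-1}) = 0$ (the right side is untouched). Lemma \ref{lemma:focuscat}(d) then guarantees that $w_{r-1}$ and $w_r$ remain the foci of $\nG$ with $w_r$ as the best mix focus: $\L_{\nG}(w_r) > \R_{\nG}(w_r)$ is automatic since $\L$ strictly increased, while $\L_{\nG}(w_{r-1}) \leq \R_{\nG}(w_r)$ reduces to $1 + r(r-2) \leq (t-r)^2$, which follows from the hypothesis $t-r \geq r-1$.

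Next, I would set up the slack step with $\pdelt(w_r) = \Delta \L(w_r) = 2r-3$. The transplanted vertex has $\Delta H(x, w_r) = r(r-2)$, exceeding $\pdelt(w_r)$ by exactly $(r-1)(r-3)$. Every other vertex has $\Delta H(v, w_r) \leq 2(r-2) < 2r-3$ (on the left spine, by \eqref{eq:leftleft2}, with equality only at $w_1$) or $\Delta H(v, w_r) = 0$ (everywhere else). So I would apply Lemma \ref{lem:slack} with $A = \{x\}$ when $r \geq 4$; the case $r = 3$ yields $A = \emptyset$ and falls under Corollary \ref{cor:slack} plus a quick direct check that the sum of $g$-values is strictly positive.

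Taking $B = \overline{A}$, the main obstacle is the single negative contributor $g(w_1) = 2 - (r-1)^2 - 2|U_{r-1}|$, which arises from $\Delta \deg(w_1) = +1$ and grows in magnitude with $|U_{r-1}|$. The key bookkeeping step is showing that the $|U_{r-1}|$-dependence is absorbed: $g(w_{r-1})$ contributes $|U_{r-1}|$ with coefficient $2r-1$, and each of the $|U_{r-1}|-1$ intact leaves in $U_{r-1} \setminus \{x\}$ contributes $2r-3$. Combining with the arithmetic-series sum $\sum_{k=2}^{r-2}(4k-2) = 2(r-1)(r-3)$ from the inner left spine and the untouched right side, the right-hand side of \eqref{eqn:slack} simplifies to
\[
(r-2)^2 \,+\, (4r-6)|U_{r-1}| \,+\, (2r-3)(2|U_r| + 2t - 2r + 1).
\]
The identity $(r-2)^2 = (r-1)(r-3) + 1$ already shows that the constant term alone exceeds the left-hand side $(r-1)(r-3)$, so the remaining positive terms yield strict inequality and hence $\Delta \Tbest > 0$.
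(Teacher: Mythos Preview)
Your proposal is correct and follows essentially the same approach as the paper: verify via Lemma~\ref{lemma:focuscat}(d) that $w_{r-1},w_r$ remain the foci with $w_r$ the best mix focus, then apply Lemma~\ref{lem:slack} with $A=\{x\}$. The only real difference is that the paper takes $B=W$ and uses the crude degree bound $\sum_{v\in W}\deg_{\nG}(v)\ge 2t-1$, whereas you take $B=\overline{A}$ and carry out the exact bookkeeping, tracking the $|U_{r-1}|$ and $|U_r|$ dependence explicitly; both routes give the needed strict inequality.
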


\begin{proof}
First, it is routine to show that the simultaneous inequalities $2r-1 \leq t$ and $\L_G(w_r) > \R_G(w_r)$  require that $\deg(w_{r-1}) \geq 4$. 
By equation \eqref{eq:leftleft2},
we have $\Delta\L(w_{r-1})=2r-3=\Delta\L(w_r)$ and
$\Delta\R(w_{r-1})=0=\Delta\R(w_r).$
We use the two criteria of Lemma \ref{lemma:focuscat} (d) to show that the foci roles do not change. Clearly
$\L_{\nG}(w_r) > \L_G(w_r) \geq \R_G (w_r) = \R_{\nG}(w_r)$.   Since $U_i=\emptyset$ for all $i\notin\{r-1,r\}$, we can use equation \eqref{eq:htimepath2} to calculate
$\L_{\nG}(w_{r-1})=(r-1)^2\leq (t-r)^2=\R_{\nG}(w_r).$

Next, we show that the best mixing time increases using Lemma \ref{lem:slack} with $A= \{ x \}$. 
 The left-hand side of inequality \eqref{eqn:slack} is
$$
\Delta H(x,w_r) - \pdelt (w_r) =( (r-1)^2-1) - (2r-3) = r^2 -4r+3.
$$
For the right hand side, we take $B=W$,
ignoring any other central leaves. We obtain
\begin{align*}
\MoveEqLeft 
(2t-1) \, \pdelt (w_r) -2 \sum_{i=1}^{r-1} \Delta H(w_i,w_r)  - H_G(w_1, w_r) + H_G(w_{r-1},w_r) \\
& = (2t-1) (2r-3) - 4 \sum_{i=1}^{r-1} (r-1-k) - (r-2)^2 \\
& \geq 2(2r-2) (2r-3)  -3 r^2 +10 r -8 \, = \, 5r^2 -10r +4, 
\end{align*}
which is clearly larger than the left hand side,  so $\Delta \Tbest > 0$.
	\end{proof}

%
%

Next, we discuss transplanting one leaf from each of $U_{r-1}$ and $U_{r}$.
		
	\begin{lemma} \label{lem:S10}
	Let $G$ be a twin broom where $t=2r-2$ and with vertices $x\in U_{r-1}$ and $y\in U_r$.
		If $\S_{10}=\tr{{r-1}}{1}\wedge\tr{r}{t}$, then $\Delta \Tbest > 0$.
	\end{lemma}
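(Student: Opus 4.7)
The plan is to mirror the argument of Lemma \ref{lem:S9}, but with both spine extensions happening simultaneously. Since $\S_{10}$ consists of two independent spine-extending transplants on opposite sides, and the hypothesis $t=2r-2$ makes the situation left-right symmetric, I expect the analysis to be a mirror-image of the $\S_9$ computation. The main obstacle will be that the degree changes at $w_1$ and $w_t$ make their $g$-contributions negative, so the right-hand side of \eqref{eqn:slack} must be large enough to absorb this alongside the transplant slack.

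First I compute the pessimal hitting-time changes. Applying equation \eqref{eq:leftleft2} with $(i,j)=(r-1,1)$ and equation \eqref{eq:rightright2} with $(i,j)=(r,t)$, and noting that each transplant affects only its own side of the spine (other than $w_r$ itself), the effects add to give $\Delta\L(w_r)=\Delta\R(w_r)=2r-3$ and $\Delta\L(w_{r-1})=\Delta\R(w_{r-1})=2r-3$. Since the left spine of $\nG$ is now leaf-free between $x$ and $w_{r-1}$, equation \eqref{eq:htimepath2} yields $\L_{\nG}(w_{r-1})=(r-1)^2=\R_{\nG}(w_r)$, so by Lemma \ref{lemma:focuscat}(d) the vertex $w_r$ remains a best mix focus with companion focus $w_{r-1}$ (in fact both are best mix foci). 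This also identifies $\pdelt(w_r)=\Delta\L(w_r)=2r-3$.

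Next I set up Lemma \ref{lem:slack} with $A=\{x,y\}$. The formulas give $\Delta H(x,w_r)=H_G(w_1,w_{r-1})+2(r-2)=(r-2)^2+2(r-2)=r^2-2r$ and $\Delta H(y,w_r)=H_G(w_t,w_r)+2=(r-2)^2+2=r^2-4r+6$, both exceeding $2r-3$ for $r\geq 4$, so $A=\{x,y\}$. Since $\deg_{\nG}(x)=\deg_{\nG}(y)=1$ and $\Delta\deg(x)=\Delta\deg(y)=0$, the left side of \eqref{eqn:slack} equals $(r^2-4r+3)+(r-3)^2=2r^2-10r+12$.

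Finally I take $B=W$, the original spine. The only degree changes are $\Delta\deg(w_1)=\Delta\deg(w_t)=+1$ and $\Delta\deg(w_{r-1})=\Delta\deg(w_r)=-1$. A term-by-term accounting using equations \eqref{eq:leftleft2}, \eqref{eq:rightright2} (where the $\Delta H(w_k,w_r)$ contributions telescope symmetrically on the two sides) yields
\[
\sum_{v\in W}g(v)=2(r-1)^2+(2r-3)(|U_{r-1}|+|U_r|).
\]
Comparing, $2(r-1)^2-(2r^2-10r+12)=6r-10>0$ for $r\geq 2$, so even ignoring the leaf term the inequality of Lemma \ref{lem:slack} holds strictly. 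The hardest piece of the accounting is absorbing the negative contributions $g(w_1)=5-2r-(r-2)^2-2|U_{r-1}|$ and $g(w_t)=2-(r-2)^2$ from the newly-spined endpoints; these are offset by $g(w_{r-1})=(2r-3)(2+|U_{r-1}|)+2|U_{r-1}|$ and $g(w_r)=(2r-3)(1+|U_r|)$ together with the $4k-2$ and $4k-4r+2$ terms from the interior spine, whose symmetric sum to $2r^2-8r+6$ on each side provides the bulk of the slack. Applying Lemma \ref{lem:slack} gives $\Delta\tbm>0$.
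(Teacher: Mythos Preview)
Your approach is the same as the paper's: verify that $w_r$ remains a best mix focus (paper uses Corollary~\ref{cor:criteria}, you use the equivalent Lemma~\ref{lemma:focuscat}(d)), then apply Lemma~\ref{lem:slack} with $A=\{x,y\}$ and $B=W$. Your right-hand side formula $\sum_{v\in W}g(v)=2(r-1)^2+(2r-3)(|U_{r-1}|+|U_r|)$ is correct and, once you use $|U_{r-1}|,|U_r|\geq 1$, recovers the paper's bound $2r^2-4$.

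However, your computation of $\Delta H(y,w_r)$ is wrong. You write $\Delta H(y,w_r)=H_G(w_t,w_r)+2=r^2-4r+6$, evidently reading the first line of \eqref{eq:rightright2} literally; that displayed formula contains a typo and should read $H_G(w_j,w_i)+2(j-i)$, in analogy with \eqref{eq:leftleft2}. A direct computation (or the corrected formula with $j-i=t-r=r-2$) gives $\Delta H(y,w_r)=(r-2)^2+2(r-2)=(r-1)^2-1=r^2-2r$, which is exactly what the paper uses. Consequently your left-hand side should be $2r^2-8r+6$ rather than $2r^2-10r+12$. Fortunately the error points the safe way (it only shrinks the left side), and with the correct value the comparison $2(r-1)^2-(2r^2-8r+6)=4r-4>0$ still succeeds, so your argument survives once this is fixed. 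You should also note that for $r=3$ one has $\Delta H(x,w_r)=\Delta H(y,w_r)=3=\pdelt(w_r)$, so $A=\emptyset$ and Corollary~\ref{cor:slack} applies directly; your write-up silently restricts to $r\geq 4$.
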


	\begin{proof}
The right-hand and left-hand spines have equal lengths before and after this surgery.
Corollary \ref{cor:criteria} holds because each  of $\Delta\L(w_{r-1})$, $\Delta\L(w_{r})$, $\Delta\R(w_{r-1})$, and $\Delta\R(w_{r})$ are equal to $2(r-2)-1= 2r-3$.
So $\nG$ has focus $w_{r-1}$ and best mix focus $w_r$.
We now show that the best mixing time increases. Setting $A=\{x,y\}$, the left hand side of inequality \eqref{eqn:slack}
is
$$\Delta H(x,w_r)+\Delta H(y,w_r)-2(2r-3)=
2 \big((r-1)^2-1 \big) - 2(2r-3) =  2r^2 -8r + 6.
$$
where we  use  equations \eqref{eq:leftleft2} and \eqref{eq:rightright2}.
As for the right-hand side, we take $B=W$, disregarding the remaining central leaves.  This right hand side is at least
\begin{align*}
\MoveEqLeft 2t \, \pdelt(w_r) - 2 \sum_{k=1}^t \Delta H(w_k,w_r) -H_G(w_1, w_r) +H(w_{r-1}, w_r) - H_G(w_t, w_r) \\
& = 2(2r-2) (2r-3) -2 \cdot 2 \sum_{k=1}^{r-1}2 (r-1-k) - 2 (r-2)^2 \, = \, 2r^2 -4.
\end{align*}
By Lemma \ref{lem:slack}, $\Delta \Tbest > 0$. 
\end{proof}

%
%

Finally, we discuss transplanting pairs of leaves from $U_r$ to the ends of the spine.
\begin{lemma} \label{lem:S10}
Let $G$ be a twin broom with $t = 2r-2$ where $U_{r-1} = \emptyset$ and  with distinct vertices $x,y \in U_r$.
If $\S_{11}=\tr{r}{1}\wedge\tr{r}{t}$, then $\Delta \Tbest > 0$.
\end{lemma}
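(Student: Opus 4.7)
The plan is to follow the same template used for $S_9$ and $S_{10}$: first confirm that the foci of $\nG$ remain $w_{r-1}$ and $w_r$ with $w_r$ still a best mix focus, then apply the slack criterion (Lemma~\ref{lem:slack}) with $A = \{x, y\}$ and an appropriate subset $B$ of the spine.

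First I would compute the changes in the pessimal hitting times. Both $\tau(r,1)$ and $\tau(r,t)$ lengthen the spine by one vertex on their respective sides, so applying equations \eqref{eq:leftleft2} and \eqref{eq:rightright2} yields $\Delta\L(w_r) = \Delta\L(w_{r-1}) = 2r - 1$ and $\Delta\R(w_r) = \Delta\R(w_{r-1}) = 2r - 3$. Since $G$ is a twin broom with $t = 2r-2$ and $U_{r-1} = \emptyset$, equation \eqref{eq:htimepath2} gives $\L_G(w_r) = (r-1)^2$, $\R_G(w_r) = (r-2)^2$, and $\L_G(w_{r-1}) = (r-2)^2$. Combining these with the $\Delta$'s and invoking Lemma~\ref{lemma:focuscat}(d) should confirm that $w_{r-1}$ remains a focus and $w_r$ remains a best mix focus in $\nG$. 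Indeed, $\nG$ is itself a twin broom with spine of length $2r$ and leaves only in $U_r$. The pessimal vertex for $w_r$ shifts from $w_1$ to the transplanted leaf $x$, so $\pdelt(w_r) = \Delta\L(w_r) = 2r - 1$.

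Next I would apply Lemma~\ref{lem:slack} with $A = \{x, y\}$. The only degree changes are $\Delta\deg(w_1) = \Delta\deg(w_t) = +1$ and $\Delta\deg(w_r) = -2$; in particular $\Delta\deg(x) = \Delta\deg(y) = 0$ and $H_G(x,w_r) = H_G(y,w_r) = 1$. Direct computation via \eqref{eq:leftleft2} and \eqref{eq:rightright2} gives $\Delta H(x, w_r) = r^2 - 1$ and $\Delta H(y, w_r) = (r-1)^2 - 1$, so the left-hand side of \eqref{eqn:slack} simplifies to $2r^2 - 6r + 1$ for $r \geq 4$ (the boundary case $r = 3$ has $y \notin A$ and an even smaller LHS, which I would verify separately). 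For the right-hand side I would take $B = W$, noting that any leaf in $U_r^{\nG}$ has $\Delta H = 0$ and $\Delta\deg = 0$ and so contributes zero. Using the same hitting-time formulas, $\Delta H(w_k, w_r)$ is piecewise linear in $k$: it equals $2(r-k)$ for $1 \leq k \leq r$ (with the boundary value $2(r-1)$ at $k=1$ coming from the case $k \leq j$ in \eqref{eq:leftleft2}) and $2(t-k)$ for $r \leq k \leq t$. Summing against the degree weights and using $\sum_{k=1}^{r-1}(2k-1) = (r-1)^2$, the RHS collapses to $|U_r|_G(2r-1) + 2r^2 - 2r - 3$. Since $|U_r|_G \geq 2$ by hypothesis, the slack inequality holds strictly, and Lemma~\ref{lem:slack} delivers $\Delta\tbm > 0$.

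The main obstacle is bookkeeping in the RHS sum along the spine: the boundary vertices $w_1$ and $w_t$ contribute negatively through the $-\Delta\deg \cdot H_G$ correction terms $-(r-1)^2$ and $-(r-2)^2$, and I must rely on the large positive contribution $|U_r|_G \, \pdelt(w_r)$ coming from the central vertex $w_r$ (where $\deg_{\nG}(w_r) = |U_r|_G$) to absorb them. This is the first surgery in the algorithm where a substantial portion of the slack comes from a single high-degree vertex rather than from spine telescoping, so care will be needed in the algebra, and the role of the hypothesis $|U_r| \geq 2$ is essential.
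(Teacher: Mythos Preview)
Your plan is exactly the paper's: verify that $w_{r-1},w_r$ remain the foci with $w_r$ the best mix focus, then apply Lemma~\ref{lem:slack} with $A=\{x,y\}$ and $B=W$. Two bookkeeping slips need fixing, though.

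First, $\Delta\L(w_{r-1})=2r-3$, not $2r-1$: the new left spine has $r-1$ edges to $w_{r-1}$, so $\L_{\nG}(w_{r-1})=(r-1)^2$, giving $\Delta\L(w_{r-1})=(r-1)^2-(r-2)^2=2r-3$. This matters, because with your value you would get $\L_{\nG}(w_{r-1})=r^2-2r+3>(r-1)^2=\R_{\nG}(w_r)$ and Lemma~\ref{lemma:focuscat}(d) would fail. With the correct value, $\L_{\nG}(w_{r-1})=\R_{\nG}(w_r)$ on the nose and the foci check goes through (this is how the paper invokes Corollary~\ref{cor:criteria}). Similarly $\Delta\R(w_{r-1})=2r-5$, though that quantity is never used.

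Second, on the right spine the correct increment is $\Delta H(w_k,w_r)=2(k-r)$ for $r\le k\le t$, not $2(t-k)$; you can read this off directly from \eqref{eq:adjhtime}, and it is what the paper actually uses in its sum $\sum_{k=r+1}^{2r-2}2(k-r)$. Your final RHS $|U_r|_G(2r-1)+2r^2-2r-3$ is nonetheless correct, because over the symmetric range $r\le k\le t=2r-2$ the two expressions $2(k-r)$ and $2(t-k)$ are reflections of one another and have the same sum. The paper reaches the slightly weaker bound $2r^2+2r-5$ by replacing $\deg_{\nG}(w_r)=|U_r|_G$ with its minimum value~$2$; your refinement and your LHS value $2r^2-6r+1$ are both correct (the paper's displayed LHS $2r^2-2r+1$ is an arithmetic slip).
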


\begin{proof}
This proof is similar to the previous one: the left and right spine lengths are equal before and after the surgery.
We have 
$\Delta\L(w_{r})=2r-1$ while  $\Delta\L(w_{r-1})=\Delta\R(w_{r-1})=\Delta \R(w_{r})=2r-3,$ where the value for $\Delta \R(w_{r-1})$ takes into account the removal of two leaves from $U_r$. We have  $	\Delta\L(w_{r-1})-\Delta\R(w_r)=0$ and $\Delta\R(w_r)-\Delta\L(w_r)=-2,$ so
 Corollary \ref{cor:criteria} ensures that $\nG$ has focus $w_{r-1}$ and best mix focus $w_r$.

 Setting $A= \{ x, y \}$, the
left hand side of inequality \eqref{eqn:slack} is
$$
 (r^2 -1) + ((r-1)^2 -1) - 2(2r-1) = 2r^2   -2r + 1.
$$
As for the right-hand side of inequality \eqref{eqn:slack}, we take $B=W$ and obtain
\begin{align*}
\MoveEqLeft  2t \, \pdelt(w_r) - 2\sum_{k=1}^t \Delta H(w_k, w_r) -H_G(w_1, w_r) - H_G(w_t, w_r) \\
&= 2(2r-2) (2r-1) - 2\sum_{k=1}^{r-1} 2(r-k) - 2 \sum_{k=r+1}^{2r-2} 2(k-r) - (r-1)^2 - (r-2)^2 \\
&=2r^2+2r-5.
\end{align*}
Thus Lemma \ref{lem:slack} ensures that $\Delta \Tbest > 0$.
\end{proof}


\section{Conclusion}

We have characterized the tree structures on $n$ vertices that minimize and maximize $\Tbest = \min_{v \in V} H(v,\pi)$. The star $S_n$ is the unique minimizing structure, but the maximization problem depends on the parity of $n$. For even $n$, the maximizing structure is the path $P_n$, and for odd $n$, it is the wishbone $Y_n$. 
It is a bit strange that the odd path is not the maximizing structure for $\Tbest$. But all is not lost: we believe that $P_n$ is the maximizing structure for a slightly different quantity.  For any graph $G$, the \emph{forget distribution} $\mu$ is the distribution achieving $\max_{v \in V} H(v,\mu) = \min_{\tau} \max_{v \in V} H(v, \tau)$. Lov\'asz and Winkler \cite{lovasz+winkler-forget} shows that $\mu$ is unique, and they give a general formula. For a tree $G$, the forget distribution is concentrated on its foci  \cite{beveridge}. When $G$ is focal, $\mu$ is a singleton distribution on the unique focus.  When $G$ is bifocal, $\mu$ is given by
$$
\mu_u = \frac{H(v',v) - H(u',v)}{2|E|} \quad \mbox{and} \quad \mu_v = \frac{H(u',u) - H(v',u)}{2|E|} 
$$
where $u,v$ are the foci of the tree. Instead of $\Tbest=\min_w H(w,\pi)$, we could instead consider the similar quantity $H(\mu, \pi)$. It is easy to see that $S_n$ minimizes $H(\mu, \pi)$ among all trees on $n$ vertices. We conjecture that $P_n$ maximizes this quantity for both even and odd $n$. 
Indeed, letting $G=P_n$ and $\nG=Y_n$, calculations show that for odd $n$, we have $H_{P_n}(\mu,\pi) = H_{Y_n}(\widetilde{\mu}, \widetilde{\pi}) + (2n-3)/(2n-2)$, and for even $n$, we have
$H_{P_n}(\mu,\pi) = H_{Y_n}(\widetilde{\mu}, \widetilde{\pi}) + 1/(2n-2)$.
Our tree surgery methods should be a fruitful line of attack, though tracking the changes in $H(\mu, \pi)$ will require a new set of lemmas.  We leave this problem for future work.


\bibliographystyle{plain}
\bibliography{bestmixref}

\begin{thebibliography}{10}

\bibitem{aldous+fill}
D.~Aldous and J.~Fill.
\newblock {Reversible Markov Chains and Random Walks on Graphs}.
\newblock http://www.stat.berkeley.edu/$\sim$aldous/RWG/book.html, accessed
  January 2014.

\bibitem{ALW}
D.~Aldous, L.~Lov\'{a}sz, and P.~Winkler.
\newblock Mixing times for uniformly ergodic {M}arkov chains.
\newblock {\em Stoch. Process. Appl.}, 71:165--185, 1997.

\bibitem{beveridge}
A.~Beveridge.
\newblock Centers for random walks on trees.
\newblock {\em SIAM J. Discrete Math.}, 23(1):300--318, 2009.

\bibitem{beveridge+wang}
A~Beveridge and M.~Wang.
\newblock Mixing times for random walks on trees.
\newblock {\em Graph Combin.}, 29(4):757--772, 2013.

\bibitem{bright}
G.~Brightwell and P.~Winkler.
\newblock {Maximum hitting time for random walks on graphs}.
\newblock In {\em Proc. 22nd ACM Symp. on Theory of Comput.}, pages 369--378,
  1990.

\bibitem{dtw}
I.~Dumitriu, P.~Tetali, and P.~Winkler.
\newblock On playing golf with two balls.
\newblock {\em SIAM J. Discrete Math.}, 16(4):604--615, 2003.

\bibitem{feige}
U.~Feige.
\newblock {A tight lower bound on the cover time for random walks on graphs}.
\newblock {\em Random Struct. Algorithms}, 6(4):433--438, 1995.

\bibitem{gw}
A.~Georgakopoulos and S.~Wagner.
\newblock Hitting times, cover cost, and the {W}iener index of a tree.
\newblock Preprint.

\bibitem{lovasz+winkler}
L.~Lov\'asz and P.~Winkler.
\newblock Efficient stopping rules for {M}arkov chains.
\newblock In {\em Proc. 27th ACM Symp. Theory Comput.}, pages 76--82, 1995.

\bibitem{lovasz+winkler-forget}
L.~Lov\'asz and P.~Winkler.
\newblock Reversal of {M}arkov chains and the forget time.
\newblock {\em Combin., Probab. Comput.}, 7:189--204, 1998.

\end{thebibliography}
\end{document}